\newtheorem{theorem}{Theorem}[section]
\newtheorem{lemma}[theorem]{Lemma}
\newtheorem{corollary}[theorem]{Corollary}
\newtheorem{proposition}[theorem]{Proposition}
\theoremstyle{remark}
\newtheorem*{remark}{Remark}
\theoremstyle{definition}
\newtheorem{definition}[theorem]{Definition}
\newtheorem{theoremIntro}{Theorem}
\newtheorem{propositionIntro}[theoremIntro]{Proposition}
\newcommand{\Z}{\mathbb{Z}}
\newcommand{\R}{\mathbb{R}}
\newcommand{\C}{\mathbb{C}}
\renewcommand{\S}{\mathbb{S}}
\newcommand{\tend}[2]{\underset{#1 \rightarrow #2}{\longrightarrow}}
\newcommand{\landau}[2]{\underset{#1 \rightarrow #2}{=}}
\newcommand{\rk}{\mathrm{rk}}
\newcommand{\Hom}{\mathrm{Hom}}
\newcommand{\End}{\mathrm{End}}
\newcommand{\Aut}{\mathrm{Aut}}
\newcommand{\tr}{\mathrm{tr}}
\renewcommand{\det}{\mathrm{det}}
\newcommand{\Vol}{\mathrm{Vol}}
\newcommand{\Lie}{\mathrm{Lie}}
\newcommand{\Id}{\mathrm{Id}}
\renewcommand{\i}{\boldsymbol{\mathrm{i}}}
\newcommand{\e}{\boldsymbol{\mathrm{e}}}
\newcommand{\scal}[2]{\left< #1,#2 \right>}
\newcommand{\norme}[1]{\left\| #1 \right\|}
\newcommand{\E}{\mathcal{E}}
\newcommand{\F}{\mathcal{F}}
\newcommand{\G}{\mathcal{G}}
\newcommand{\Gr}{\mathrm{Gr}}
\newcommand{\dbar}{\overline{\partial}}
\newcommand{\mC}{\mathcal{C}}
\renewcommand{\leq}{\leqslant}
\renewcommand{\geq}{\geqslant}
\newcommand{\inter}[1]{#1^o}
\newcommand{\fonction}[4]{\left\{ \begin{array}{rcl}
		\displaystyle #1 & \longrightarrow & \displaystyle #2\\
		\displaystyle #3 & \longmapsto & \displaystyle #4
	\end{array} \right.}
\title{Continuity of HYM Connections \\ with Respect to Metric Variations}
\author{Rémi Delloque}
\date{}
\begin{document}

\maketitle
\begin{abstract}
    We investigate the set of (real Dolbeault classes of) balanced metrics $\Theta$ on a balanced manifold $X$ with respect to which a torsion-free coherent sheaf $\E$ on $X$ is slope stable. We prove that the set of all such $[\Theta] \in H^{n - 1,n - 1}(X,\R)$ is an open convex cone locally defined by a finite number of linear inequalities.
    
    When $\E$ is a Hermitian vector bundle, the Kobayashi--Hitchin correspondence provides associated Hermitian Yang--Mills connections, which we show depend continuously on the metric, even around classes with respect to which $\mathcal{E}$ is only semi-stable. In this case, the holomorphic structure induced by the connection is the holomorphic structure of the associated graded object. The method relies on semi-stable perturbation techniques for geometric PDEs with a moment map interpretation and is quite versatile, and we hope that it can be used in other similar problems.
\end{abstract}

\section{Introduction}

The famous Kobayashi--Hitchin correspondence \cite{KH1,KH2,Donaldson,Uhlenbeck_Yau} gives the equivalence between the existence of Hermitian Yang--Mills (HYM) connections on a Hermitian holomorphic vector bundle $\E \rightarrow X$ and slope stability \cite{Mumford,Takemoto}. HYM connections correspond to solutions of a partial differential equation while slope stability is a purely algebraic notion. These tools are used to construct and study the moduli space of vector bundles on a given complex manifold.

Both of these notions depend on the balanced metric on the compact complex manifold $X$ (balanced metrics are defined below). It is a natural question to ask how $\E$ may become stable or unstable when this metric varies, and to study the behaviour of the associated HYM connections. It is easy to see that stability is an open condition in the space of balanced metrics and that we can locally build a smooth family of associated HYM connections around a metric $\Theta_0$ with respect to which $\E$ is stable thanks to the implicit function theorem. However, when $\E$ is only semi-stable with respect to $\Theta_0$, problems occur and wall-crossing phenomena appear. The method used here is inspired from \cite{Gabor,DMS,Clarke_Tipler}, and relies on semi-stable perturbation techniques for geometric PDEs with a moment map interpretation.

Let $g$ be a Hermitian metric on $X$ and $\omega$ be the associated $(1,1)$-form. Then, by definition, $(X,\omega)$ is Kähler if and only if $d\omega = 0$. $\E$ is said to be slope stable with respect to $[\omega]$ if for all coherent sub-sheaves $\mathcal{S}$ of $\E$ with $0 < \rk(\mathcal{S}) < \rk(\E)$,
$$
\mu_{[\omega^{n - 1}]}(\mathcal{S}) < \mu_{[\omega^{n - 1}]}(\E).
$$
Here, $n = \dim(X)$ and,
$$
\mu_{[\omega^{n - 1}]}(\E) = \frac{c_1(\E) \cup [\omega^{n - 1}]}{\rk(\E)},
$$
is the slope of $\E$. We see that these inequalities are linear in $\omega^{n - 1}$ and $\omega$ is not required to be closed, only $\omega^{n - 1}$ is. When $\omega^{n - 1}$ is closed (but not necessarily $\omega$), we say that $X$ is balanced and this is enough to get the Kobayashi--Hitchin correspondence. This is inspired from the idea of Greb, Ross and Toma in \cite{GRT} where wall-crossing phenomena are studied with respect to multipolarisations to make the equations linear instead of polynomial. We recall general definitions and results about balanced manifolds in Section \ref{SEC:Généralités}.

From here, the inequalities that define slope stability are linear in $\Theta = \frac{\omega^{n - 1}}{(n - 1)!}$. As in the Kähler case, we define the balanced cone $\mathcal{B}_X$ as the set of all real Dolbeault classes,
$$
[\Theta] \in H^{n - 1,n - 1}(X,\R) = H^{n - 1,n - 1}(X,\C) \cap H^{2n - 2}(X,\R),
$$
of positive $(n - 1,n - 1)$ forms $\Theta$. In Section \ref{SEC:Finitude des c_1}, we prove that locally, there is only a finite number of these inequalities to verify in order to obtain all of them. This is a consequence of the following result (Corollary \ref{COR:Finitude locale} below).

\begin{propositionIntro}\label{PRO:1}
    Let $X$ be a compact complex balanced manifold of balanced cone $\mathcal{B}_X$ and $\E \rightarrow X$ a torsion-free coherent sheaf. Then, for all compact $K \subset \mathcal{B}_X$, there is a finite family $\mathcal{S}_1,\ldots,\mathcal{S}_p \subset \E$ with for all $k$, $0 < \rk(\mathcal{S}_k) < \rk(\E)$ such that for all $[\Theta] \in K$, $\E$ is $[\Theta]$-stable (resp. $[\Theta]$-semi-stable) if and only if for all $k$, $\mu_{[\Theta]}(\mathcal{S}_k) < \mu_{[\Theta]}(\E)$ (resp. $\mu_{[\Theta]}(\mathcal{S}_k) \leq \mu_{[\Theta]}(\E)$).
\end{propositionIntro}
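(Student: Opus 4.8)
The plan is to control, uniformly over the compact set $K$, the first Chern classes of the potentially destabilizing subsheaves, and then to use that the slope of a subsheaf with respect to a class in $K$ depends on very little data. Call a coherent subsheaf $\mathcal{S} \subseteq \E$ \emph{$[\Theta]$-destabilizing} if $0 < \rk(\mathcal{S}) < \rk(\E)$ and $\mu_{[\Theta]}(\mathcal{S}) \geq \mu_{[\Theta]}(\E)$. Set $V := \Span(K) \subseteq H^{n-1,n-1}(X,\R)$ and let $\overline{c_1(\mathcal{S})}$ be the image of $c_1(\mathcal{S}) \in H^2(X,\Z)$ in $V^*$ under the cup product pairing. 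Since for $[\Theta] \in K \subseteq V$ one has $\mu_{[\Theta]}(\mathcal{S}) = \frac{c_1(\mathcal{S}) \cup [\Theta]}{\rk(\mathcal{S})}$, this slope depends only on $\rk(\mathcal{S})$ and $\overline{c_1(\mathcal{S})}$. Hence it suffices to prove that the set of pairs $\big(\rk(\mathcal{S}),\overline{c_1(\mathcal{S})}\big)$, as $[\Theta]$ ranges over $K$ and $\mathcal{S}$ over the \emph{saturated} $[\Theta]$-destabilizing subsheaves, is finite: choosing one representative $\mathcal{S}_k$ for each occurring pair gives the desired family, because if $\E$ is not $[\Theta]$-stable for some $[\Theta] \in K$ it admits a $[\Theta]$-destabilizing subsheaf, whose saturation is again $[\Theta]$-destabilizing (its slope is $\geq$ that of the original: $\rk$ is unchanged, and $c_1(\overline{\mathcal{S}}) - c_1(\mathcal{S})$ is the class of the effective codimension-one part of $\mathrm{Supp}(\overline{\mathcal{S}}/\mathcal{S})$, which pairs non-negatively with the positive $(n-1,n-1)$-class $[\Theta]$) and saturated, so its pair equals that of some $\mathcal{S}_k$, giving $\mu_{[\Theta]}(\mathcal{S}_k) = \mu_{[\Theta]}(\overline{\mathcal{S}}) \geq \mu_{[\Theta]}(\E)$; this is the contrapositive of the ``only if'' direction, the ``if'' direction is immediate from the definition of stability, and the semi-stable statement is identical with $\leq$ throughout.

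So fix a Hermitian metric $h$ on $\E$ and establish two a priori bounds on $\deg_{[\Theta]}(\mathcal{S}) := c_1(\mathcal{S}) \cup [\Theta]$, for $[\Theta] \in K$ and $\mathcal{S}$ a saturated $[\Theta]$-destabilizing subsheaf. The lower bound $\deg_{[\Theta]}(\mathcal{S}) \geq \tfrac{\rk(\mathcal{S})}{\rk(\E)}\,\deg_{[\Theta]}(\E)$ is the destabilizing condition, and its right-hand side is bounded below uniformly over $[\Theta] \in K$ because $c_1(\E)$ is fixed and $K$ is compact. For the upper bound, recall (see Section \ref{SEC:Généralités}) that a saturated subsheaf is a holomorphic subbundle of $\E$ off an analytic set of codimension $\geq 2$, so its degree is computed there, and that $[\Theta]$ is represented by $\tfrac{\omega^{n-1}}{(n-1)!}$ for a positive $(1,1)$-form $\omega$ with $\omega^{n-1}$ closed; the Chern--Weil formula then reads
$$
\deg_{[\Theta]}(\mathcal{S}) \;=\; \tfrac{1}{2\pi}\int_X \tr\!\big(\i\Lambda_\omega F_h\,\pi\big)\tfrac{\omega^n}{n!} \;-\; \tfrac{1}{2\pi}\int_X |\dbar\pi|^2\,\tfrac{\omega^n}{n!} \;\leq\; \tfrac{\rk(\E)}{2\pi}\int_X \big|\i\Lambda_\omega F_h\big|\,\tfrac{\omega^n}{n!},
$$
where $\pi$ is the $L^2_1$ orthogonal projection onto $\mathcal{S}$. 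Because $\deg_{[\Theta]}$ depends only on the class and $K$ is compact inside the \emph{open} cone $\mathcal{B}_X$, the representatives $\omega$ may be taken from finitely many smooth families, so the last integral is bounded above uniformly over $[\Theta] \in K$. Thus $|\deg_{[\Theta]}(\mathcal{S})| \leq C_K$ for all such $[\Theta]$ and $\mathcal{S}$.

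To conclude, pick $[\Theta_1], \dots, [\Theta_d] \in K$ forming a basis of $V = \Span(K)$; then $w \mapsto \big(w([\Theta_i])\big)_{1 \leq i \leq d}$ is an isomorphism $V^* \xrightarrow{\sim} \R^d$, so the bound $|c_1(\mathcal{S}) \cup [\Theta_i]| \leq C_K$ places $\overline{c_1(\mathcal{S})}$ in a bounded subset of the image $\overline L$ of the finitely generated group $H^2(X,\Z)$ in $V^*$. A bounded subset of a finitely generated abelian group is finite, so $\overline{c_1(\mathcal{S})}$ takes finitely many values; since also $\rk(\mathcal{S}) \in \{1, \dots, \rk(\E) - 1\}$, the set of pairs is finite and the argument of the first paragraph yields the proposition.

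The main obstacle is the uniform a priori upper bound on $\deg_{[\Theta]}(\mathcal{S})$, simultaneously over all $[\Theta] \in K$ and all subsheaves $\mathcal{S}$. Two points require care: first, $\E$ and its subsheaves are generally not locally free, which forces the passage to saturations and an argument on the locus where everything is a genuine holomorphic subbundle, using that the complement has codimension $\geq 2$ and so does not affect degrees; second, $X$ is only balanced, so a Dolbeault class has no canonical representative, and the uniformity relies on compactness of $K$ inside the open cone together with the fact — used throughout — that $\deg_{[\Theta]}(\mathcal{S})$ descends to classes precisely because $\omega^{n-1}$ is closed. The remaining ingredients, namely the positivity pairing of effective divisors against positive $(n-1,n-1)$-classes and the passage from a bounded subset of a lattice to a finite set, are routine.
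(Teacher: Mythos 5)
Your strategy is genuinely different from the paper's: you aim for a \emph{uniform} two-sided Chern--Weil bound on degrees of saturated destabilisers over $K$ and then invoke lattice finiteness, whereas the paper only uses the pointwise (fixed $[\Theta]$) upper bound on slopes of subsheaves as a black box and extracts local finiteness from an abstract lemma about a discrete subset of $H^{1,1}(X,\R)$ paired against a compact family of functionals. Your reduction to saturated subsheaves and the shape of the two a priori bounds are reasonable (modulo the care needed to make sense of $h$ and of $\int_X\abs{\i\Lambda_\omega F_h}\Vol_\omega$ when $\E$ is torsion-free but not locally free --- an issue the paper sidesteps by citing the boundedness fact directly). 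But the concluding step has a genuine gap, in two places. First, your two-sided bound $\abs{\deg_{[\Theta]}(\mathcal{S})}\leq C_K$ is only established at the particular $[\Theta]\in K$ at which $\mathcal{S}$ destabilises: the lower bound is the destabilising inequality \emph{at that class}, while at the other basis points $[\Theta_1],\dots,[\Theta_d]$ you only have the Chern--Weil upper bound. So $\abs{c_1(\mathcal{S})\cup[\Theta_i]}\leq C_K$ does not follow for $[\Theta_i]\neq[\Theta]$. Second, and more seriously, even granting such bounds, the image $\overline{L}$ of $H^2(X,\Z)$ in $V^*=\Span(K)^*$ is a finitely generated subgroup of a real vector space but need \emph{not} be discrete, and a bounded subset of it can be infinite: take $K$ a single point $[\Theta_0]$, so that $V^*\cong\R$ and $\overline{L}=\{c\cup[\Theta_0]\mid c\in H^2(X,\Z)\}$ is in general dense (compare $\Z+\Z\sqrt{2}\subset\R$). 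The assertion that a bounded subset of a finitely generated abelian group is finite is false in this sense, and this is precisely the point where discreteness has to enter.

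Both defects are repairable by the same device, which brings you back to the discreteness the paper's Lemma \ref{LEM:Finitude locale} exploits. Since $K$ is compact in the \emph{open} cone $\mathcal{B}_X$, there is $t_0>0$ such that $[\Theta]\pm t_0\alpha\in\mathcal{B}_X$ for all $[\Theta]\in K$ and all $\alpha$ in a fixed basis of $H^{n-1,n-1}(X,\R)$, with all these classes contained in a single larger compact set $K'\subset\mathcal{B}_X$. Running your upper bound over $K'$ and combining it with the lower bound at the destabilising $[\Theta]$ and linearity of $\ell=c_1(\mathcal{S})\cup\cdot$ gives $t_0\abs{\ell(\alpha)}\leq 2C_{K'}$ for every basis vector $\alpha$, hence a bound on $\ell$ on all of $H^{n-1,n-1}(X,\R)$. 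By non-degeneracy of the Serre pairing this bounds $c_1(\mathcal{S})$ in $H^{1,1}(X,\R)$ itself, where the image of $H^2(X,\Z)$ \emph{is} discrete, so only finitely many classes $c_1(\mathcal{S})$ occur and your selection of representatives goes through. With that repair (and a reference for the degree bound in the torsion-free case), your argument gives a valid alternative proof; it is more explicit than the paper's but requires the uniform analytic input, where the paper's Lemma \ref{LEM:Finitude locale} converts the pointwise bound into local uniformity by a purely finite-dimensional compactness argument.
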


In the smooth projective setting, stronger results of the same kind are already known, such as \cite[Theorem 6.7]{Greb_Toma}. It implies immediately that slope stability is an open condition, which can also be proven by analytic methods using the implicit function theorem \cite[Theorem 5.1.4]{Lübke_Teleman_KH}. Proposition \ref{PRO:1} gives the structure of the set of metrics with respect to which $\E$ is stable (resp. semi-stable).

\begin{theoremIntro}\label{THE:2}
    The set $\mC_s^{\mathrm{D}}(\E)$ (resp. $\mC_{ss}^{\mathrm{D}}(\E)$) of balanced classes of metrics $[\Theta]$ with respect to which $\E$ is stable (resp. semi-stable) is locally an open (resp. closed) convex polyhedral cone.
\end{theoremIntro}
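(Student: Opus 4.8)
The strategy is to combine Proposition \ref{PRO:1} (finiteness of walls on compact sets) with the elementary observation that each stability inequality $\mu_{[\Theta]}(\mathcal{S}_k) < \mu_{[\Theta]}(\E)$ is a strict linear inequality in $[\Theta] \in H^{n-1,n-1}(X,\R)$. Let me sketch the steps.

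First, I would make explicit the linear-functional structure. For a fixed coherent subsheaf $\mathcal{S} \subset \E$ with $0 < \rk(\mathcal{S}) < \rk(\E)$, define the linear form $\ell_{\mathcal{S}} : H^{n-1,n-1}(X,\R) \to \R$ by $\ell_{\mathcal{S}}([\Theta]) = \rk(\E)\big(c_1(\mathcal{S}) \cup [\Theta]\big) - \rk(\mathcal{S})\big(c_1(\E) \cup [\Theta]\big)$, using that the cup product pairs $H^{1,1}$ (containing the real first Chern classes) with $H^{n-1,n-1}$ into $H^{n,n}(X,\R) \cong \R$. Then $\mu_{[\Theta]}(\mathcal{S}) < \mu_{[\Theta]}(\E)$ is exactly $\ell_{\mathcal{S}}([\Theta]) < 0$ (after clearing the positive denominators $\rk(\mathcal{S})\rk(\E)$), and likewise semi-stability is $\ell_{\mathcal{S}}([\Theta]) \leq 0$. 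So $\mC_s^{\mathrm{D}}(\E) = \bigcap_{\mathcal{S}} \{\ell_{\mathcal{S}} < 0\}$ and $\mC_{ss}^{\mathrm{D}}(\E) = \bigcap_{\mathcal{S}} \{\ell_{\mathcal{S}} \leq 0\}$, both intersections taken over the (a priori infinite) family of all destabilizing-candidate subsheaves and both intersected with the balanced cone $\mathcal{B}_X$, which is itself an open convex cone.

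Next, the local polyhedral structure. Fix $[\Theta_0] \in \mathcal{B}_X$ and choose a compact neighborhood $K \subset \mathcal{B}_X$ of $[\Theta_0]$ (e.g. a closed ball). By Proposition \ref{PRO:1}, there is a \emph{finite} family $\mathcal{S}_1, \ldots, \mathcal{S}_p$ such that on $K$, $[\Theta]$-stability is equivalent to $\ell_{\mathcal{S}_k}([\Theta]) < 0$ for $k = 1, \ldots, p$, and semi-stability to $\ell_{\mathcal{S}_k}([\Theta]) \leq 0$. Hence on the interior $\inter{K}$, which is an open neighborhood of $[\Theta_0]$, we have $\mC_s^{\mathrm{D}}(\E) \cap \inter{K} = \inter{K} \cap \bigcap_{k=1}^p \{\ell_{\mathcal{S}_k} < 0\}$ and $\mC_{ss}^{\mathrm{D}}(\E) \cap \inter{K} = \inter{K} \cap \bigcap_{k=1}^p \{\ell_{\mathcal{S}_k} \leq 0\}$. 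A finite intersection of open (resp. closed) half-spaces is an open (resp. closed) convex polyhedron; intersecting with the open convex set $\inter{K}$ preserves this locally. Scale-invariance of the $\ell_{\mathcal{S}_k}$ (they are linear, hence homogeneous) together with the fact that $\mathcal{B}_X$ is a cone gives that $\mC_s^{\mathrm{D}}(\E)$ and $\mC_{ss}^{\mathrm{D}}(\E)$ are cones globally, which is why the statement says ``locally an open (resp. closed) convex polyhedral cone'': near any point the defining inequalities are finitely many linear ones, and openness/closedness follow since strict (resp. non-strict) inequalities cut out open (resp. relatively closed) sets. Convexity is immediate as an intersection of convex sets.

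The main obstacle is entirely packaged into Proposition \ref{PRO:1}: without the local finiteness of walls, $\mC_s^{\mathrm{D}}(\E)$ would only be an intersection of infinitely many half-spaces, i.e. merely convex, not \emph{polyhedral}. Given that input, the remaining argument is soft point-set topology and convexity. One minor point requiring a line of care: one must check that the $\ell_{\mathcal{S}_k}$ obtained from Proposition \ref{PRO:1} are nontrivial as linear forms on the relevant directions (or handle the degenerate case where some $\ell_{\mathcal{S}_k} \equiv 0$, in which case that wall either imposes no condition or excludes everything, consistently with $\mathcal{S}_k$ being a genuine subsheaf); and that the identification $c_1$ with a class in $H^{1,1}(X,\R)$ pairing into $H^{n,n} \cong \R$ is the one used to define slopes in the balanced setting, which is recalled in Section \ref{SEC:Généralités}. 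I would also remark that taking the union of the finite wall-families over a locally finite cover of $\mathcal{B}_X$ shows the wall set is locally finite globally, so $\mC_s^{\mathrm{D}}(\E)$ is the complement in $\mathcal{B}_X$ of a locally finite union of hyperplane pieces — a ``locally polyhedral'' cone in the precise sense of the theorem.
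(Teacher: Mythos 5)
Your proposal is correct and follows essentially the same route as the paper: Section 3.2 introduces the linear forms $l_{\mathcal{S}}$ (the paper compares $\mu(\E/\mathcal{S})$ with $\mu(\mathcal{S})$ via the see-saw property, an inessential difference from your normalisation), and the proof of Theorem \ref{THE:Structures cônes} then applies Corollary \ref{COR:Finitude locale} to a compact neighbourhood to write each cone locally as a finite intersection of open or closed half-spaces. The paper likewise treats everything beyond the local finiteness input as elementary convexity and topology.
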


A more precise version of this theorem with topological properties is given by Theorem \ref{THE:Structures cônes} below. Proposition \ref{PRO:1} and Theorem \ref{THE:2} are folklore and the main results of this paper are the analytical results which concern the behaviour of the associated HYM connections on $\E$.

Let $(E,h)$ be a Hermitian vector bundle on $X$. For each complex structure $\dbar$ on $E$, there is a unique $h$-unitary connection $\nabla$ on $E$ such that $\nabla^{0,1} = \dbar$. It is the \textit{Chern connection} of $(\dbar,h)$.

Following \cite[Section 4.1]{Kobayashi}, we say that a Chern connection $\nabla$ is a $\Theta$-Hermitian Yang--Mills connection (or satisfies the Einstein condition) if the curvature form $F_\nabla = \nabla \circ \nabla$ satisfies,
$$
\i F_\nabla \wedge \Theta = c\,\Vol_\Theta\,\Id_E,
$$
for some constant $c$. Here, $\Vol_\Theta = \frac{\omega^n}{n!}$ is the volume form associated to $\Theta = \frac{\omega^{n - 1}}{(n - 1)!}$ It is an order two non-linear elliptic equation motivated by gauge theory in physics.

\begin{remark}\label{REM:Formule c}
When it exists, $c$ is entirely determined by the topology of $E$, the real Dolbeault class $[\Theta] \in H^{n - 1,n - 1}(X,\R)$ and the volume of $X$. Indeed, by Chern-Weil theory, we have,
$$
c_\Theta = \frac{2\pi c_1(E) \cup [\Theta]}{\rk(E)\Vol_\Theta(X)},
$$
by integrating over $X$ the trace of the HYM equation.
\end{remark}
Since the Hermitian metric $h$ on $E$ is fixed, $\nabla$ depends only on the complex structure $\dbar$ of $E$ as it is the Chern connection associated with $(\dbar,h)$. We write $F_{\dbar} = F_\nabla$ and we also say with a slight abuse that $\dbar$ is $\Theta$-HYM.

The gauge group $\G^\C$ of $\E$ is defined as the group of smooth sections of $\End(E)$ which are isomorphisms on each fibre. It acts on the set of complex structures $\dbar$,
$$
f \cdot \dbar = f \circ \dbar \circ f^{-1}.
$$
When $\E = (E,\dbar_E,h)$ is a Hermitian holomorphic vector bundle, we are interested in finding HYM complex structures $\dbar$ which lie in the same gauge group of $\dbar_E$. It is equivalent to saying that $(E,\dbar_E)$ and $(E,\dbar)$ are biholomorphic.

A powerful analytic tool to study stability is the Kobayashi--Hitchin correspondence, which was originally proven for Kähler metrics by Donaldson on surfaces \cite{Donaldson} and by Uhlenbeck and Yau in higher dimension \cite{Uhlenbeck_Yau} and extended to more general cases, including balanced metrics \cite{Li_Yau}, and on Higgs bundles \cite{Lübke_Teleman}.

\begin{propositionIntro}[Kobayashi--Hitchin correspondence]\label{PRO:Kobayashi--Hitchin}
    If $\E = (E,\dbar_E,h)$ is a Hermitian holomorphic vector bundle of positive rank, then it is $[\Theta]$-polystable if and only if it admits a $\Theta$-HYM operator $\dbar$ in the gauge orbit of $\dbar_E$. In this case, it is unique up to a unitary gauge transformation and the decomposition of $\E$ in a direct sum of stable bundles is always both $\dbar$-holomorphic and $h$-orthogonal, making its components $\Theta$-HYM.
\end{propositionIntro}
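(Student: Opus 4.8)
The plan is to deduce the Kobayashi–Hitchin correspondence for balanced metrics essentially as a known result, but to organize the proof around the moment-map picture so that it fits the perturbative framework used later in the paper. First I would recall that a balanced metric with $\omega^{n-1}$ closed is exactly what is needed for Chern–Weil theory to apply: the degree $\deg_{[\Theta]}(\F) = c_1(\F)\cup[\Theta]$ is well defined for coherent subsheaves, the slope is well defined, and the constant $c_\Theta$ of Remark \ref{REM:Formule c} is topological. The heart of the statement is the equivalence between $[\Theta]$-polystability and the existence of a $\Theta$-HYM operator in the gauge orbit of $\dbar_E$; I would cite \cite{Li_Yau} (and \cite{Lübke_Teleman_KH}) for the fact that slope stability of $(E,\dbar_E)$ implies the existence of a $\Theta$-HYM metric $H$, equivalently, via the Chern connection, of a $\Theta$-HYM operator, and then reduce the polystable case to the stable one.

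For the polystable direction, I would write $\E = \bigoplus_i \E_i\otimes \C^{m_i}$ as a direct sum of mutually non-isomorphic stable bundles $\E_i$ of the same $[\Theta]$-slope (this is the definition of polystability together with the fact that all summands of a semistable object with the slope of $\E$ must themselves have that slope). Applying the stable case to each $\E_i$ gives a $\Theta$-HYM metric $H_i$ on $\E_i$; the orthogonal direct sum $H = \bigoplus H_i\otimes (\text{flat metric on }\C^{m_i})$ is then $\Theta$-HYM on $\E$ with the same Einstein constant, because the curvature of a direct sum is block-diagonal and each block satisfies the same equation with the same $c_\Theta$. Conversely, if some $\dbar$ in the gauge orbit of $\dbar_E$ is $\Theta$-HYM, I would invoke the standard argument: a HYM connection on a Hermitian bundle is automatically polystable, since any coherent $\dbar$-subsheaf $\mathcal S$ gives, via the orthogonal projection and the Chern–Weil/subbundle inequality (the curvature decreases under taking holomorphic subbundles, plus a mean-value/integration argument to handle the singular locus of $\mathcal S$), $\mu_{[\Theta]}(\mathcal S)\le \mu_{[\Theta]}(\E)$ with equality forcing $\mathcal S$ to be a holomorphic and $H$-orthogonal direct summand; induction on the rank then exhibits $(E,\dbar)$, hence $(E,\dbar_E)$, as a polystable bundle. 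Since $(E,\dbar_E)\cong (E,\dbar)$ biholomorphically, $\E$ itself is $[\Theta]$-polystable.

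For uniqueness up to unitary gauge, I would use the usual convexity argument: if $H_1, H_2$ are two $\Theta$-HYM metrics solving the equation, set $s = H_1^{-1}H_2$, a positive self-adjoint endomorphism, and compute $\Delta \log\tr(s)$ (or use the Donaldson functional / the distance function on the space of metrics, which is geodesically convex along $H_t = H_1 e^{t\log s}$); the HYM equation makes this subharmonic, so $\tr(s)$ is constant, and then a further computation forces $\nabla s = 0$, so $s$ is parallel and decomposes $E$ into eigenbundles on which $H_1$ and $H_2$ differ by a constant — i.e. they are related by a unitary gauge transformation. The last sentence of the proposition — that the stable decomposition is simultaneously $\dbar$-holomorphic and $H$-orthogonal with $\Theta$-HYM components — then follows from the uniqueness-driven rigidity: the given HYM structure must coincide, up to unitary gauge, with the block structure built in the polystable direction, and the blocks were constructed to be stable and $\Theta$-HYM.

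The step I expect to be the main obstacle, or at least the one requiring the most care, is the subsheaf inequality in the balanced (non-Kähler) setting: making sense of $c_1(\mathcal S)\cup[\Theta]$ for a possibly non-saturated, non-locally-free coherent subsheaf $\mathcal S$, controlling the singular locus (codimension $\ge 2$) so that the integration-by-parts in the Chern–Weil argument is valid with only $\omega^{n-1}$ closed rather than $\omega$, and checking that the second fundamental form term has the correct sign. This is exactly where \cite{Li_Yau} does the work, so I would state the key estimate precisely and cite it rather than reprove it, emphasizing only the point that closedness of $\omega^{n-1}$ (not of $\omega$) is all that enters, since that is the feature the rest of the paper exploits.
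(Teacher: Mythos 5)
The paper offers no proof of this proposition: it is stated as a known theorem with references to Donaldson \cite{Donaldson}, Uhlenbeck--Yau \cite{Uhlenbeck_Yau}, Li--Yau \cite{Li_Yau} and L\"ubke--Teleman, which is exactly where your outline also places the analytic core (existence in the stable balanced case). Your assembly of the full statement from that input --- the block-sum construction for the polystable direction, the converse via the second-fundamental-form/Chern--Weil inequality with its equality case giving the orthogonal holomorphic splitting, and uniqueness via convexity of the Donaldson functional --- is the standard argument carried out in those references, so your proposal is correct and takes essentially the same route as the paper.
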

\begin{remark}
Here, we fix $h$ and we find $\dbar$ in the gauge orbit of $\dbar_E$ so the Chern connection associated to $(\dbar,h)$ is $\Theta$-HYM. It is also possible to fix $\dbar_E$ and to find a metric $k$ such that the Chern connection associated to $(\dbar_E,k)$ is $\Theta$-HYM. In this case, there is a map, $f : (E,\dbar_E,k) \rightarrow (E,\dbar,h)$ which is both a biholomorphism and an isometry.
\end{remark}

Our purpose from now on is to extend locally the algebraic results of Theorem \ref{THE:Structures cônes} to analytic results. Concretely, we want to show that when $\Theta \in \mC_s(\E)$ approaches a point $\Theta_0 \in \mC_{ss}(\E)$, we can build some $\Theta$-HYM $\dbar$ in the gauge orbit of $\dbar_E$ that approaches some $\Theta_0$-HYM $\dbar_0$, which is in the gauge orbit of the Dolbeault operator of $\Gr_{[\Theta_0]}(\E)$ (under the additional condition that $\E$ is sufficiently smooth with respect to $[\Theta_0]$).

In Section \ref{SEC:Résulats analytiques}, we investigate the behaviour of the HYM connections when $\Theta$ approaches a metric $\Theta_0$ with respect to which $\E$ is only semi-stable. If $\E$ is $[\Theta_0]$-semi-stable, we can build a $[\Theta_0]$-polystable coherent sheaf $\Gr_{[\Theta_0]}(\E)$ from a Jordan-Hölder filtration of $\E$ (see Proposition \ref{PRO:FJH}). If this sheaf is locally free, we say that $\E$ is \textit{sufficiently smooth}. In this case, the underlying smooth structure of the graded object is isomorphic to $E$ in a canonical way (thanks to the metric $h$) and it admits a $\Theta_0$-HYM $h$-unitary connection $\nabla_0 = \partial_0 + \dbar_0$.

Let $\norme{\cdot}_A$ be a $\mC^k$ norm, a Hölder norm or a Sobolev norm that dominates the $\mC^1$ norm on the space of smooth forms with values in any smooth complex bundle on $X$. We set,
$$
A' = \left\{
\begin{array}{lcl}
    \mC^{k - 1} & \textrm{if} & A = \mC^k,\\
    \mC^{k - 1,\alpha} & \textrm{if} & A = \mC^{k,\alpha},\\
    L_{k - 1}^p & \textrm{if} & A = L_k^p.
\end{array}
\right.
$$
It defines a norm $\norme{\cdot}_{A'}$ which dominates the $\mC^0$ norm on the space of smooth forms with values in any smooth complex bundle on $X$. Let $\norme{\cdot}$ be any norm on the finite dimensional space $H^{n - 1,n - 1}(X,\R)$.

For all real smooth $(n - 1,n - 1)$-form $\varepsilon \in \Omega^{n - 1,n - 1}(X,\R)$ whose $\mC^0$ norm is small enough, $\Theta_0 + \varepsilon$ is positive, thus is a balanced metric. When $\E$ and sufficiently smooth (with respect to $[\Theta_0]$), we show a convergence result when $\Theta \rightarrow \Theta_0$ (Theorem \ref{THE:Famille de connections HYM} below).

\begin{theoremIntro}\label{THE:3}
    Assume that there exists a positive $(n - 1,n - 1)$ form $\Theta'$ such that $\E$ is $[\Theta']$-stable (in particular, it must be simple). For all $\Theta_\varepsilon = \Theta_0 + \varepsilon$ $\mC^0$-close to $\Theta_0$ such that $\E$ is $[\Theta_\varepsilon]$-semi-stable, there is an associated $h$-unitary connection $\nabla_\varepsilon = \partial_\varepsilon + \dbar_\varepsilon$ on $E$ that has the three following properties,
    \begin{enumerate}
        \item $\nabla_\varepsilon$ is $\Theta_\varepsilon$-HYM.
        \item $(E,\dbar_\varepsilon)$ is biholomorphic to $\Gr_{[\Theta_\varepsilon]}(\E)$, which is locally free. In particular, if $\E$ is $[\Theta_\varepsilon]$-stable, then $(E,\dbar_\varepsilon) \cong \E$.
        \item The bound $\norme{\dbar_\varepsilon - \dbar_0}_A = \mathrm{O}\!\left(\norme{\varepsilon}_{A'} + \sqrt{\norme{[\varepsilon]}}\right)$ holds (the $\mathrm{O}$ depends on $A$). Therefore, the same bound holds for $\norme{\partial_\varepsilon - \partial_0}_A$ and $\norme{\nabla_\varepsilon - \nabla_0}_A$. In particular, $\nabla_\varepsilon \rightarrow \nabla_0$ for the $\mC^\infty$ topologies.
    \end{enumerate}
\end{theoremIntro}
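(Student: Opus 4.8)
The plan is to set up the problem as a perturbation of a moment-map equation near a semi-stable point and apply an implicit-function-type argument with a quantitative Łojasiewicz-style control of the obstruction. Fix the Hermitian metric $h$ on $E$ once and for all. The key observation is that the $\Theta$-HYM equation for a Chern connection, after subtracting the topological constant $c_\Theta$ (Remark \ref{REM:Formule c}), is the vanishing of a moment map for the action of the unitary gauge group on the space of $h$-unitary connections, and that deforming $\Theta$ to $\Theta_\varepsilon = \Theta_0 + \varepsilon$ deforms the symplectic/Kähler data of this picture in a controlled way, since the dependence of the equation on $\Theta$ is linear. At $\Theta_0$ the bundle is only semi-stable, so the linearisation of the HYM operator at $\dbar_0$ (the Dolbeault operator of the locally free graded object $\Gr_{[\Theta_0]}(\E)$) has a non-trivial cokernel: this is the infinitesimal deformation space coming from the extensions in the Jordan--Hölder filtration, exactly the obstruction that produces wall-crossing. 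The assumption that $\E$ is $[\Theta']$-stable for some balanced $\Theta'$ forces $\E$ to be simple, which is what guarantees that the relevant cokernel is controlled and that the perturbed problem has a genuine (as opposed to merely formal) solution.

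First I would recall the Kuranishi-type slice around $\dbar_0$: using Proposition \ref{PRO:Kobayashi--Hitchin} at $\Theta_0$, $\dbar_0$ is $\Theta_0$-HYM and polystable, and one builds a finite-dimensional local model for gauge-equivalence classes of nearby complex structures, parametrised by $H^1$ of the endomorphism complex of $\Gr_{[\Theta_0]}(\E)$, with obstruction in $H^2$. Next I would write the full equation $\Phi(\dbar,\varepsilon) = 0$ where $\Phi$ is the trace-free part of $\i F_{\dbar}\wedge\Theta_\varepsilon - c_{\Theta_\varepsilon}\Vol_{\Theta_\varepsilon}\Id_E$, regarded as a map between the relevant Sobolev/Hölder completions (this is where the norms $A$, $A'$ enter). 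Then I would Lyapunov--Schmidt split: solve the "infinite-dimensional part" of $\Phi = 0$ by the implicit function theorem to get, for each small $\varepsilon$ and each small slice parameter $a$, a connection $\dbar(a,\varepsilon)$ with $\Phi(\dbar(a,\varepsilon),\varepsilon)$ lying in the finite-dimensional obstruction space; this already gives a bound of the shape $\|\dbar(a,\varepsilon) - \dbar_0\|_A = \mathrm{O}(\|a\| + \|\varepsilon\|_{A'})$. The remaining finite-dimensional equation is the vanishing of a reduced moment map $\mu_\varepsilon : B \to \mathfrak{k}^*$ on a ball $B$ in the obstruction space, and the point is that $[\Theta_\varepsilon]$-semi-stability of $\E$ is precisely the hypothesis under which this reduced moment map has a zero in the gauge orbit, with the zero lying over the polystable point $\Gr_{[\Theta_\varepsilon]}(\E)$ — this is the content of the moment-map / GIT picture borrowed from \cite{Gabor,DMS,Clarke_Tipler}, and it is also what gives property (2). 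Properties (1) and (2) then follow by construction, and the $\mathC^\infty$-convergence in (3) follows by elliptic bootstrapping once the bound is known in one sufficiently strong norm.

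The quantitative part of (3) — the appearance of $\sqrt{\|[\varepsilon]\|}$ rather than $\|[\varepsilon]\|$ — is where the argument is most delicate and is the main obstacle. It comes from the finite-dimensional reduced equation: the leading term of $\mu_\varepsilon$ along the obstruction directions is quadratic in the slice parameter $a$ (the moment map vanishes to second order at a polystable point, its Hessian being essentially a sum of squares of the "off-diagonal" deformations), while the inhomogeneous term produced by moving the class $[\Theta]$ off the wall is linear in $[\varepsilon]$; balancing $|a|^2 \sim |[\varepsilon]|$ forces $|a| \sim \sqrt{\|[\varepsilon]\|}$. Making this rigorous requires (a) identifying the correct non-degeneracy of the Hessian of the reduced moment map transverse to the stabiliser — here simplicity of $\E$, guaranteed by the existence of the stable polarisation $[\Theta']$, is essential, as it rules out extra flat directions; and (b) checking that the finite-dimensional system is solvable with the claimed bound, e.g. by a scaling/blow-up argument or a direct application of a quantitative inverse function theorem to the rescaled map. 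The dependence of the constant on $A$ is harmless: it enters only through the norm of the elliptic parametrix used in the Lyapunov--Schmidt step and through the finitely many Sobolev embeddings used in the bootstrap. I expect that once the moment-map framework and the Kuranishi slice are in place, steps one through three are essentially formal, and the real work is the quadratic estimate for $\mu_\varepsilon$ and the solvability of the reduced equation.
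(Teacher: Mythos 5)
Your overall architecture is the paper's: a Kuranishi slice around the HYM operator $\dbar_0$ of the graded object, a perturbation of the slice via the implicit function theorem so that the contracted curvature lands in $\frak{k}$, and a reduced finite-dimensional moment map $\nu_\varepsilon$ on the ball $B$ whose zeros on the orbit of $\E$ give the desired HYM operators; your heuristic for the $\sqrt{\norme{[\varepsilon]}}$ (quadratic vanishing of the moment map in the slice directions balanced against a term linear in $[\varepsilon]$) is also the right one and corresponds to Proposition \ref{PRO:Inégalité norme b}. But the two steps you defer are exactly where the content of the proof lies, and the way you propose to handle the first would fail. You assert that $[\Theta_\varepsilon]$-semi-stability yields a zero of the reduced moment map over the polystable point as ``the content of the moment-map / GIT picture''. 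This is not available off the shelf: the perturbed slice $\tilde\Phi(\varepsilon,\cdot)$ is not holomorphic, so the symplectic form $\Omega_\varepsilon$ for which $\nu_\varepsilon$ is a moment map is not compatible with the complex structure on $B$, and the Kempf--Ness correspondence between GIT (semi-)stability and zeros of the moment map does not apply. The paper has to produce the zero by hand: it runs the flow of $-\i L_b\nu_\varepsilon(b)$, proves long-time existence (using the estimate of Proposition \ref{PRO:Inégalité norme b} to confine the flow to $B$), shows that the Donaldson functional decreases along the flow up to an error controlled uniformly in $t$ (Proposition \ref{PRO:phi décroissante} --- delicate precisely because the gauge transformations generated by the flow differ from those acting on $\dbar_E$ by the slice correction $\e^{\sigma(\varepsilon,b)}$), and invokes Simpson's properness estimate for the Donaldson functional on a stable bundle to bound the gauge transformations and extract a convergent subsequence. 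None of this is formal, and it is also where the hypothesis $\mC_s(\E)\neq\emptyset$ is actually used (first for $\Theta_\varepsilon$ in the stable cone, then by a limiting argument on the semi-stable walls).

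Two further gaps. First, your identification of $(E,\dbar_\varepsilon)$ with $\Gr_{[\Theta_\varepsilon]}(\E)$ when $\E$ is only $[\Theta_\varepsilon]$-semi-stable is asserted, not proved: the zero then lies in $\overline{\mathcal{O}}\setminus\mathcal{O}$, and identifying the polystable degeneration in the closure of the orbit with the graded object is a genuine input (the paper cites Buchdahl--Schumacher for it). Second, for the quantitative estimate you gesture at ``a scaling/blow-up argument or a quantitative inverse function theorem'', but the actual proof is a direct computation: a Hilbert--Mumford destabilising one-parameter subgroup puts $\Phi(b)$ in strictly upper triangular form with respect to an orthogonal splitting of $E$ into subbundles of the correct slope, and pairing the moment map with the elements $a_{F_{k,b}}$ via the second fundamental form and Chern--Weil yields $\norme{b}^2 \leq C(\norme{\nu_\varepsilon(b)}_\varepsilon + \norme{\varepsilon}_{\mC^0}^2 + \norme{[\varepsilon]})$ on all of $\overline{\mathcal{O}}$, not merely at the solution --- and this stronger, uniform form is needed for the flow argument above. (A minor slip: the ball $B$ lives in the deformation space $H^{0,1}(X,\End(\Gr(\E)))$, not in the obstruction space $H^2$.)
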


The HYM connection, when it exists in some complex gauge orbit, is not unique but it is exactly one $\G$-orbit where $\G \subset \G^\C$ is the group of smooth sections $u$ of endomorphisms of $E$ which verify $uu^* = \Id_E$. The space of all $h$-unitary connections on $E$ modulo $\G$ is well-defined and Hausdorff for the $\mC^\infty$ topology \cite[Corollary 7.1.15]{Kobayashi}. A consequence of Theorem \ref{THE:3} is the following one (Theorem \ref{THE:Gamma continue} below). The assumption of $\E$ being stable with respect to some balanced form is no more necessary for Theorem \ref{THE:4}.

\begin{theoremIntro}\label{THE:4}
    The function that maps each balanced metric $\Theta$ with respect to which $\E$ is semi-stable and sufficiently smooth to the class modulo $\G$ of the $\Theta$-HYM $h$-unitary connections $\nabla = \partial + \dbar$ such that $(E,\dbar) \cong \Gr_{[\Theta]}(\E)$, is a continuous function with respect to the $\mC^\infty$ topologies.
\end{theoremIntro}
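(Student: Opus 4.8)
Continuity being a local statement, I would fix a balanced class $[\Theta_0]$ at which $\E$ is semi-stable and sufficiently smooth, write $\Gr_{[\Theta_0]}(\E) \cong \bigoplus_j \mathcal{G}_j$ with the $\mathcal{G}_j$ locally free and $[\Theta_0]$-stable of slope $\mu_{[\Theta_0]}(\E)$, and let $\nabla_0 = \bigoplus_j \nabla_0^{(j)}$ be the $h$-orthogonal sum of their HYM connections, which exists and is unique modulo $\G$ by Proposition~\ref{PRO:Kobayashi--Hitchin}; its $\G$-class is the value $F([\Theta_0])$ of the function $F$ at stake. Since the target is a Hausdorff quotient (\cite[Corollary 7.1.15]{Kobayashi}), it suffices to show: for every sequence $[\Theta_m] \to [\Theta_0]$ in the domain of $F$ (for the $\mC^\infty$ topology), one has $F([\Theta_m]) \to [\nabla_0]_\G$.

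The whole point is to reduce to Theorem~\ref{THE:3} — the sole analytic input — applied not to $\E$ itself (which need not be stable with respect to a balanced form, which is why that hypothesis of Theorem~\ref{THE:3} disappears here) but to the stable constituents of the graded objects, which always are. I would first invoke the local polyhedral structure: by Theorem~\ref{THE:Structures cônes} and the boundedness of Proposition~\ref{PRO:1}, a neighbourhood of $[\Theta_0]$ in the domain of $F$ is covered by finitely many relatively open polyhedral pieces $\sigma$ on each of which $\E$ is $[\Theta]$-semi-stable with $\Gr_{[\Theta]}(\E)$ constant up to isomorphism (the wall-and-chamber picture for semi-stability with respect to a varying polarisation $\Theta$, cf.\ \cite{GRT}). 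On such a $\sigma$, write $\Gr_{[\Theta]}(\E) \cong \mathcal{H}_\sigma = \bigoplus_l \mathcal{H}_l^\sigma$ for the associated constant decomposition into summands that are stable with respect to every form of $\sigma$; as $\sigma \subset \mathcal{B}_X$, each $\mathcal{H}_l^\sigma$ is then stable with respect to a balanced form. Since semi-stability is a closed condition and $[\Theta_0] \in \overline{\sigma}$, each $\mathcal{H}_l^\sigma$ is $[\Theta_0]$-semi-stable; and the two-step behaviour of the associated-graded construction, $\bigoplus_l \Gr_{[\Theta_0]}(\mathcal{H}_l^\sigma) \cong \Gr_{[\Theta_0]}(\E)$, forces each $\Gr_{[\Theta_0]}(\mathcal{H}_l^\sigma)$ to be a direct summand of the locally free sheaf $\Gr_{[\Theta_0]}(\E)$, hence locally free, i.e.\ $\mathcal{H}_l^\sigma$ is sufficiently smooth with respect to $[\Theta_0]$. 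It therefore suffices to prove $F([\Theta]) \to F([\Theta_0])$ as $[\Theta] \to [\Theta_0]$ within each of these finitely many $\sigma$.

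Fix such a $\sigma$ and an $l$. Now $\mathcal{H}_l^\sigma$ satisfies the hypotheses of Theorem~\ref{THE:3} with base point $\Theta_0$ (stable with respect to a positive $(n-1,n-1)$-form, $[\Theta_0]$-semi-stable, sufficiently smooth), so for $\Theta = \Theta_0 + \varepsilon \in \sigma$ close to $\Theta_0$, where $\Gr_{[\Theta]}(\mathcal{H}_l^\sigma) = \mathcal{H}_l^\sigma$, Theorem~\ref{THE:3} yields an $h$-unitary $\Theta$-HYM connection $\nabla_\varepsilon^{(l)} = \partial_\varepsilon^{(l)} + \dbar_\varepsilon^{(l)}$ with $(E_l, \dbar_\varepsilon^{(l)}) \cong \mathcal{H}_l^\sigma$ and, for every norm $A$ of the kind allowed there,
$$
\norme{\dbar_\varepsilon^{(l)} - \dbar_0^{(l)}}_A = \mathrm{O}\!\left(\norme{\varepsilon}_{A'} + \sqrt{\norme{[\varepsilon]}}\right),
$$
where $\nabla_0^{(l)} = \partial_0^{(l)} + \dbar_0^{(l)}$ is the $\Theta_0$-HYM connection of $\Gr_{[\Theta_0]}(\mathcal{H}_l^\sigma)$. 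The connection $\nabla_\varepsilon := \bigoplus_l \nabla_\varepsilon^{(l)}$ on $E \cong \bigoplus_l E_l$ (the isometric identification coming, as for $\Gr_{[\Theta_0]}(\E)$, from sufficient smoothness and $h$) is $h$-unitary and $\Theta$-HYM — the Einstein constants of the blocks agree because all $\mathcal{H}_l^\sigma$ have slope $\mu_{[\Theta]}(\E)$ — and $(E, \dbar_\varepsilon) \cong \bigoplus_l \mathcal{H}_l^\sigma = \mathcal{H}_\sigma \cong \Gr_{[\Theta]}(\E)$, so by the uniqueness clause of Proposition~\ref{PRO:Kobayashi--Hitchin} its $\G$-class equals $F([\Theta])$. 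Letting $\varepsilon \to 0$, the displayed bound gives $\dbar_\varepsilon^{(l)} \to \dbar_0^{(l)}$ in every $\mC^k$ and hence $\nabla_\varepsilon \to \bigoplus_l \nabla_0^{(l)}$ in $\mC^\infty$; and $\bigoplus_l \nabla_0^{(l)}$ is $\Theta_0$-HYM of holomorphic type $\bigoplus_l \Gr_{[\Theta_0]}(\mathcal{H}_l^\sigma) \cong \Gr_{[\Theta_0]}(\E)$, so its $\G$-class is $[\nabla_0]_\G = F([\Theta_0])$. This is the required convergence along $\sigma$, and ranging over the finitely many $\sigma$ yields continuity of $F$ at $[\Theta_0]$; as $[\Theta_0]$ was arbitrary, $F$ is continuous.

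I expect the real difficulty to be organisational rather than analytic. The PDE content — carrying an HYM connection through the wall-crossing where $\Gr_{[\Theta]}(\E)$ collapses onto the more split $\Gr_{[\Theta_0]}(\E)$, with the quantitative rate — is already contained in Theorem~\ref{THE:3}; what must be set up carefully is the algebraic input of the second paragraph: that near $[\Theta_0]$ the domain of $F$ decomposes into finitely many strata with locally constant graded object, whose stable constituents are stable with respect to balanced forms, are $[\Theta_0]$-semi-stable, and degenerate compatibly at $[\Theta_0]$ (so that they are sufficiently smooth there and the pieces reassemble $\Gr_{[\Theta_0]}(\E)$). This part could be reorganised as an induction on $\rk(\E)$ — Theorem~\ref{THE:3} directly when $\E$ is stable with respect to a balanced form, all Jordan--Hölder constituents having strictly smaller rank otherwise — but since those constituents still vary with $[\Theta]$ one cannot entirely dispense with the stratification; and throughout one leans on the uniqueness clause of Kobayashi--Hitchin, which lets one discard the particular connection produced by Theorem~\ref{THE:3} and retain only its $\G$-class.
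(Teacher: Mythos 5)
Your proof is correct and is essentially the paper's argument: both reduce Theorem \ref{THE:4} to Theorem \ref{THE:3} applied to the stable constituents of the relevant graded objects, using the local polyhedral structure and Lemma \ref{LEM:Gr(Gr)} to see that these constituents are stable with respect to some balanced class and sufficiently smooth at $[\Theta_0]$, and then reassembling by $h$-orthogonal direct sums (equal slopes giving equal Einstein constants) together with the uniqueness clause of Proposition \ref{PRO:Kobayashi--Hitchin}. The only difference is organisational: the paper works on a whole compact convex piece $C = \underline{\mC}_{ss}(\E) \cap K$ at once, choosing $[\Theta_1]$ in the relative interior of $C$ and writing $\Gamma_\E = \Gamma_{\Gr_{[\Theta_1]}(\E)} = \sum_k \Gamma_{\E_k}$ for a single decomposition into $[\Theta_1]$-stable summands valid on all of $C$, whereas you stratify a neighbourhood of the given $[\Theta_0]$ into finitely many faces and take a (possibly different) decomposition on each stratum --- both are sound, the paper's version merely avoiding the need to track several decompositions and to check stability of the summands across a stratum (for which, in your setup, polystability at each point of $\sigma$ --- immediate from Lemma \ref{LEM:Gr(Gr)} --- already suffices).
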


Theorem \ref{THE:2} is a generalisation of \cite[Proposition 1.1]{Clarke_Tipler} and Theorem \ref{THE:3} is a generalisation of \cite[Theorem 1.2]{Clarke_Tipler}, where in both cases, we remove the hypothesis that the group of automorphisms of the graded object is abelian. Theorem \ref{THE:4} is a natural consequence of Theorem \ref{THE:3} and may be useful to study the variations of the moduli space of vector bundles when the metric of the manifold varies. Proposition \ref{PRO:1} may also be more generally useful to study wall-crossing phenomena involving the slope stability.

The method used in this article relies on semi-stable perturbation techniques. When $\E$ is a $[\Theta_0]$-semi-stable sufficiently smooth vector bundle, it can be viewed as a small deformation of its graded object $\Gr(\E) = \Gr_{[\Theta_0]}(\E)$. Then, by Kuranishi theory \cite{Kuranishi}, all the small deformations of $\Gr(\E)$ belong, up to a gauge transformation, in the Kuranishi slice which is a germ of finite dimensional complex manifold, on which the group $K$ of unitary automorphisms of $\Gr(\E)$ acts naturally. Moreover, for all $\Theta$ $\mC^0$-close to $[\Theta_0]$, there is a Kähler form $\Omega_\Theta^D$ on this germ depending of $\Theta$ such that the action of $K$ is Hamiltonian with respect to this form. Additionally, the zeroes of the associated moment map correspond to connections $\nabla$ such that the contraction of the curvature $\frac{\i F_\nabla \wedge \omega^{n - 1}}{\omega^n}$ with respect to $\Theta = \frac{\omega^{n - 1}}{(n - 1)!}$ is orthogonal to the Lie algebra $\frak{k}$ of $K$. We want this contraction to be a constant homothety in order to obtain a HYM connection.

From here, there are two methods. The first one is to find a zero of this moment map using GIT theory, and then perturb the induced connection to obtain a new connection whose contraction of the curvature is a constant homothety. This method is used by Sektnan--Tipler \cite{Sektnan_Tipler}, Dervan--McCarthy--Sektnan \cite{DMS} and Ortu--Sektnan \cite{Ortu_Sektnan} for example. The second method is to first perturb the Kuranishi slice so the holomorphic structures in the perturbed slice induce connections whose contraction of the curvature belongs to $\frak{k}$. Then, we find a zero of the associated perturbed moment map, whose induced connection is directly HYM. The main issue with this second method is that the perturbed slice is no more a germ of complex manifold, it is only symplectic. Nevertheless, this second method is used by Székelyhidi \cite{Gabor}, Clarke--Tipler \cite{Clarke_Tipler} and Ortu \cite{Ortu} for example. In this paper, we use the second method too with the moment map flow. It is versatile and can probably be used in similar settings whenever there is a PDE with a moment map interpretation and wall-crossing phenomena.

These results naturally raise the question of whether or not they generalise to the case of Gauduchon metrics. A metric $g$ is called Gauduchon if $\partial\dbar\omega^{n - 1} = 0$. All complex manifolds admit Gauduchon metrics making it a very general situation. The algebraic part of this paper used to prove Theorem \ref{THE:2} applies \textit{mutatis mutandis} when we consider only Gauduchon metrics. The analytic part used to prove Theorems \ref{THE:3} and \ref{THE:4} however do not generalise immediately due to the lack of moment map interpretation in the non-balanced case in Section \ref{SEC:Résulats analytiques}. It is expected however that Theorems \ref{THE:3} and \ref{THE:4} are still true with Gauduchon metrics.

\paragraph{Acknowledgement}

I thank my PhD supervisor Carl Tipler for pointing out this problem, giving me the idea of the perturbed Kuranishi slice and for his feedbacks on the paper. I also thank Lars Martin Sektnan for his feedbacks on this paper and the talk we had about it and about his work with Annamaria Ortu on constant scalar curvature metrics. I thank Matei Toma for pointing out a mistake in the first draft of this paper.

\section{Generalities about balanced metrics and their Laplace-Beltrami operators}\label{SEC:Généralités}

\subsection{Operations on compact complex manifolds}

Let $(X,g)$ be a Hermitian compact complex manifold of dimension $n \geq 2$ and let $\omega$ be the associated $(1,1)$-form (not necessarily closed). Then,
$$
\Theta = \frac{\omega^{n - 1}}{(n - 1)!},
$$
is a positive $(n - 1,n - 1)$-form in the following sense. In a local frame $(z_1,\ldots,z_n)$, we can define for each $1 \leq i,j \leq n$,
$$
\widehat{dz_i \wedge d\overline{z_j}} = dz_1 \wedge d\overline{z_1} \wedge \cdots \wedge dz_{i - 1} \wedge d\overline{z_{i - 1}} \wedge d\overline{z_i} \wedge \cdots \wedge d\overline{z_{j - 1}} \wedge dz_j \wedge d\overline{z_{j + 1}} \wedge \cdots \wedge dz_n \wedge d\overline{z_n}.
$$
Then, the local expression of $\Theta$ has the form,
$$
\Theta = \sum_{1 \leq i,j \leq n} \Theta_{ij}\i^{n - 1}\widehat{dz_i \wedge d\overline{z_j}},
$$
Positivity  of $\Theta$ means that the matrix formed by the the $\Theta_{ij}$ is Hermitian positive definite at each point, and this does not depend on the choice of the local frame. By \cite[Equation (4.8)]{Michelsohn}, $\omega$ (thus $g$) is entirely determined by $\Theta$ and depends smoothly on it. In other words, any positive $(n - 1,n - 1)$ form $\Theta$ gives rise to a Hermitian metric $g$ on $X$ which depends smoothly on $\Theta$. Let $\Vol_\Theta = \frac{\omega^n}{n!}$ be the natural associated volume form. For all $\Theta > 0$, we can define the trace operator $\Lambda_\Theta$ on complex forms as $\star^{-1} \circ (\omega \wedge \cdot) \circ \star$ where $\star$ is the Hodge star. In particular, on $(1,1)$ forms $\alpha$, it is characterised by the equality,
$$
(\Lambda_\Theta\alpha)\Vol_\Theta = \alpha \wedge \Theta.
$$
This operator naturally extends to forms with values in a complex vector bundle $E$ on $X$. Let $(E,h)$ be a complex Hermitian vector bundle on $X$. Following for example \cite[Definition 4.1.11]{Huybrechts}, let us recall the natural Hermitian product on $\Omega^{p,q}(X,E)$ with respect to the metric $\Theta$,
\begin{equation}\label{EQ:Produit hermitien}
    \scal{\alpha}{\beta}_\Theta = \int_X (\alpha|\beta)_{\Theta,h}\Vol_\Theta,
\end{equation}
where $(\cdot|\cdot)_{\Theta,h}$ is the natural Hermitian product on $\Lambda^{p,q}T^*X \otimes E$ depending on $\Theta$ and $h$, and we see $\alpha$ and $\beta$ as sections of this bundle. It makes the decomposition $\Omega^*(X,E) = \bigoplus_{0 \leq p,q \leq n} \Omega^{p,q}(X,E)$ orthogonal. In particular, on sections $\xi,\eta$ and on $1$-forms $\alpha,\beta$, we have,
$$
\scal{\xi}{\eta}_\Theta = \int_X h(\xi,\eta)\Vol_\Theta, \qquad \scal{\alpha}{\beta}_\Theta = \int_X h(\alpha,J\beta) \wedge \Theta.
$$
Here, $J$ is the natural complex structure on $\Omega^*(X,E)$ given by $J = \i^{p - q}$ on $(p,q)$-forms. It enables us to consider adjoints of operators with respect to these Hermitian products. Let $\nabla = \partial + \dbar$ be an integrable $h$-unitary connection on $E$ (\textit{i.e.} $\dbar^2 = 0$ and $\nabla$ is the Chern connection associated to $(\dbar,h)$). From now on, all connections are assumed to be $h$-unitary so there is a one-to-one correspondence between Dolbeault operators and connections on $E$ given by the Chern correspondence. Then, we can define the Laplacian operators with respect to $\Theta$ and $\nabla$,
$$
\Delta_{\Theta,\partial} = \partial\partial^* + \partial^*\partial, \qquad \Delta_{\Theta,\dbar} = \dbar\dbar^* + \dbar^*\dbar, \qquad \Delta_{\Theta,\nabla} = \nabla\nabla^* + \nabla^*\nabla,
$$
The second one is called the Dolbeault Laplacian and the third one the Laplace-Dolbeault Laplacian. These three operators are real self-adjoint and elliptic of order $2$. See for example \cite[Definition (3.14)]{Demailly}.

\subsection{When $X$ is balanced}

The Hermitian metric $g$ on $X$ is said to be a \textit{balanced metric} if the associated $(1,1)$-form $\omega$ is such that $\omega^{n - 1}$ is closed. Balanced metrics where introduced by Michelsohn in \cite{Michelsohn}, see \cite{Fu} for a reference. Notice that $\Theta = \frac{\omega^{n - 1}}{(n - 1)!}$ is closed (\textit{i.e.} $\omega$ is balanced) if and only if $\omega$ is co-closed \cite[Proposition 1]{Gauduchon}. With a slight abuse of notation, we call $\Theta$ a balanced metric in this case. When such a metric exists, we say that $X$ is balanced. A characterisation of balanced manifolds is given by \cite[Theorem A]{Michelsohn}. Similarly to the Kähler cone, we can introduce the balanced cone,
$$
\mathcal{B}_X = \{[\Theta] \in H^{n - 1,n - 1}(X,\R)|\Theta \textrm{ positive $(n - 1,n - 1)$-form}\},
$$
where the $H^{p,p}(X,\R) = H^{p,p}(X,\C) \cap H^{2p}(X,\R)$ are the real Dolbeault cohomology groups.
\begin{remark}
In the literature about balanced manifolds, the balanced cone usually contains elements of the Bott-Chern cohomology groups instead of the real Dolbeault ones, where the boundary morphisms are $\i\partial\dbar$ instead of $d$. Here, we only need to work modulo $d$. Since $\partial\dbar = \frac{1}{2}(\partial - \dbar)d$, the real Dolbeault cohomology groups are naturally quotients of the Bott-Chern ones. Therefore, this article's balanced cone can be seen as a projection of the classical balanced cone on a quotient space.
\end{remark}
From now on, we assume that $\Theta$ is balanced.

\begin{lemma}\label{LEM:Identités de Kähler}
    The following Kähler identities hold on the space $\Omega^1(X,E)$,
    \begin{align*}
        \Lambda_\Theta\partial & = \i\dbar^*,\\
        \Lambda_\Theta\dbar & = -\i\partial^*,
    \end{align*}
    and the following hold on the space of smooth sections $\Omega^0(X,E)$,
    \begin{align*}
        \Delta_{\Theta,\partial} & = \i\Lambda_\Theta\dbar\partial,\\
        \Delta_{\Theta,\dbar} & = -\i\Lambda_\Theta\partial\dbar,\\
        \Delta_{\Theta,\partial} + \Delta_{\Theta,\dbar} & = \Delta_{\Theta,\nabla} = \i\Lambda_\Theta(\dbar\partial - \partial\dbar),\\
        \Delta_{\Theta,\partial} - \Delta_{\Theta,\dbar} & = \Lambda_\Theta\i F_\nabla,
    \end{align*}
    where $F_\nabla = \nabla^2 = \dbar\partial + \partial\dbar$ is the curvature form of $\nabla$.
\end{lemma}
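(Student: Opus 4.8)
The plan is to reduce everything to the two commutation identities $\Lambda_\Theta\partial = \i\dbar^*$ and $\Lambda_\Theta\dbar = -\i\partial^*$ on $1$-forms, and then to obtain the four Laplacian identities on sections by purely formal manipulations. The delicate point is that $\Theta$ is \emph{not} the $(n-1)$-st power of a K\"ahler form, so the classical K\"ahler identities do not apply verbatim; however, they do apply when $\omega^{n-1}$ is closed, which is exactly the balanced hypothesis we are assuming. Concretely, the identity $[\Lambda_\omega,\dbar]=-\i\partial^*$ in the K\"ahler case is proven locally, and its derivation only uses $d\omega=0$ at the point where one trivialises; more to the point, on a balanced manifold one has $d\omega^{n-1}=0$, and the operator $\Lambda_\Theta$ we use is built from $\omega^{n-1}/(n-1)!$ via the Hodge star associated to the metric $g$ that $\Theta$ determines. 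So the first step is to invoke the standard Hermitian (Demailly) formalism: for \emph{any} Hermitian metric the Bochner--Kodaira--Nakano identities hold up to torsion terms involving $d\omega$, and the torsion terms that would obstruct $\Lambda_\Theta\partial=\i\dbar^*$ on $1$-forms with values in $E$ vanish precisely because the relevant contraction sees only $\omega^{n-1}$, which is closed. I would cite \cite[Section 4]{Michelsohn} or the Gauduchon reference \cite{Gauduchon} together with \cite{Demailly} to pin this down, rather than re-deriving it.

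Granting the two adjunction identities on $\Omega^1(X,E)$, the rest is bookkeeping. First, $\Delta_{\Theta,\partial} = \partial\partial^* + \partial^*\partial$ acting on $\Omega^0(X,E)$ reduces to $\partial^*\partial$, since $\partial^*$ vanishes on sections; then substitute $\partial^* = \i\Lambda_\Theta\dbar$ — wait, more carefully, from $\Lambda_\Theta\dbar = -\i\partial^*$ we get $\partial^* = \i\Lambda_\Theta\dbar$, so $\Delta_{\Theta,\partial} = \partial^*\partial = \i\Lambda_\Theta\dbar\partial$ on sections. Symmetrically, $\Delta_{\Theta,\dbar} = \dbar^*\dbar = -\i\Lambda_\Theta\partial\dbar$ using $\dbar^* = -\i\Lambda_\Theta\partial$ on $1$-forms, which follows from $\Lambda_\Theta\partial = \i\dbar^*$. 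Adding these two gives $\Delta_{\Theta,\partial} + \Delta_{\Theta,\dbar} = \i\Lambda_\Theta(\dbar\partial - \partial\dbar)$. For the identification with $\Delta_{\Theta,\nabla}$ on sections, note $\nabla = \partial + \dbar$ and $\nabla^*$ vanishes on sections, so $\Delta_{\Theta,\nabla} = \nabla^*\nabla = (\partial^* + \dbar^*)(\partial + \dbar) = \partial^*\partial + \dbar^*\dbar + \partial^*\dbar + \dbar^*\partial$; the cross terms $\partial^*\dbar + \dbar^*\partial$ vanish on sections because $\partial^*\dbar\xi$ is the $(0,1)$-to-$(0,0)$ part composed with a $(1,0)$-projection (type reasons: $\partial$ raises $(p,q)$ by $(1,0)$ and $\partial^*$ lowers by $(1,0)$, so $\partial^*\dbar$ sends $\Omega^{0,0}$ to $\Omega^{-1,1}=0$), hence $\Delta_{\Theta,\nabla} = \partial^*\partial + \dbar^*\dbar = \Delta_{\Theta,\partial} + \Delta_{\Theta,\dbar}$. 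Finally, $\Delta_{\Theta,\partial} - \Delta_{\Theta,\dbar} = \i\Lambda_\Theta\dbar\partial + \i\Lambda_\Theta\partial\dbar = \i\Lambda_\Theta(\dbar\partial + \partial\dbar) = \Lambda_\Theta\i F_\nabla$, since $F_\nabla = \nabla^2 = (\partial + \dbar)^2 = \partial^2 + \partial\dbar + \dbar\partial + \dbar^2 = \partial\dbar + \dbar\partial$ using $\dbar^2 = 0$ and $\partial^2 = 0$ (the latter holds because $\nabla$ is the Chern connection of an integrable $\dbar$, so its curvature is of type $(1,1)$, forcing $\partial^2 = 0$).

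I expect the only genuine obstacle to be the first step, namely making precise \emph{why} the two adjunction identities $\Lambda_\Theta\partial = \i\dbar^*$ and $\Lambda_\Theta\dbar = -\i\partial^*$ survive on a balanced manifold even though $\omega$ itself need not be closed. The cleanest route is to observe that these identities are statements about $1$-forms with values in $E$, that $\dbar^*$ and $\partial^*$ on $1$-forms are computed via $\Vol_\Theta$ and the pairing $\scal{\alpha}{\beta}_\Theta = \int_X h(\alpha, J\beta)\wedge\Theta$, and that integration by parts against $\Theta = \omega^{n-1}/(n-1)!$ produces exactly a $d\Theta$ term which vanishes by the balanced condition; any residual pointwise torsion terms can be shown to cancel by a local computation in a frame that is unitary at a point, where the difference between the balanced and K\"ahler formalisms is first order in the $1$-jet of $\omega$ but only enters through $d\omega^{n-1}$. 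Everything after that is formal and type-theoretic, and I would present it compactly as above.
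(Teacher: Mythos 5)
Your proposal is correct and matches the paper's proof: the paper establishes the two commutation identities by exactly the direct integration by parts against $\Theta$ that you describe in your last paragraph (Stokes applied to $h(\alpha,s)\wedge\Theta$, where $d\Theta=0$ kills the exact term and no residual torsion terms appear), and then derives the four Laplacian identities on sections by the same formal type-bookkeeping as in your second paragraph, noting that $\partial^*$, $\dbar^*$, $\nabla^*$ and the cross terms vanish on $0$-forms. The detour through the Hermitian Bochner--Kodaira--Nakano torsion formalism in your first paragraph is therefore unnecessary, though it would also work, since the torsion correction on $1$-forms only involves $\Lambda_\Theta d\omega$, which vanishes precisely when $\omega$ is balanced.
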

\begin{proof}
The proof is the same as the standard proof of Kähler identities, but only on sections and $1$-forms. Indeed, if $\alpha$ is a $(0,1)$ form and $s$ a $0$-form with values in $E$,
\begin{align*}
    \scal{\Lambda_\Theta\dbar\alpha}{s}_\Theta & = \int_X h(\Lambda_\Theta\dbar\alpha,s)\Vol_\Theta\\
    & = \int_X h(\dbar\alpha,s) \wedge \Theta\\
    & = \int_X dh(\alpha,s) \wedge \Theta + \int_X h(\alpha,\partial s) \wedge \Theta\\
    & = \int_X h(\alpha,-J\i\partial s) \wedge \Theta \textrm{ because } d\Theta = 0,\\
    & = \scal{\alpha}{-\i\partial s}_\Theta.
\end{align*}
We obtain the second identity. The first one is similar. We deduce that, on smooth sections,
$$
\Delta_{\Theta,\partial} = \partial\partial^* + \partial^*\partial = \partial^*\partial = \i\Lambda_\Theta\dbar\partial,
$$
and similarly,
$$
\Delta_{\Theta,\dbar} = \dbar\dbar^* + \dbar^*\dbar = \dbar^*\dbar = -\i\Lambda_\Theta\partial\dbar.
$$
The equality $\Delta_{\Theta,\nabla} = \Delta_{\Theta,\partial} + \Delta_{\Theta,\dbar}$ always hold by definition even if we don't assume $d\Theta = 0$. The third equality involving Laplacians is the sum of the two first and the fourth one is their difference.
\end{proof}

\begin{proposition}\label{PRO:Noyau Laplacien}
    On smooth sections, $\Delta_{\Theta,\nabla}$ verifies $\ker(\Delta_{\Theta,\nabla}) = \ker(\nabla)$ is the space of $\nabla$-parallel sections and if moreover $\Lambda_\Theta\i F_\nabla = 0$, then $\ker(\Delta_{\Theta,\nabla}) = \ker(\dbar)$ is the space of $\dbar$-holomorphic sections.
\end{proposition}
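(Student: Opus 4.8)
The plan is to derive both statements from the Kähler identities established in Lemma~\ref{LEM:Identités de Kähler}, using the standard trick of pairing the Laplacian against the section itself.

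\medskip

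\noindent\textbf{First claim: $\ker(\Delta_{\Theta,\nabla}) = \ker(\nabla)$.}
First I would note that the inclusion $\ker(\nabla) \subset \ker(\Delta_{\Theta,\nabla})$ is immediate from the definition $\Delta_{\Theta,\nabla} = \nabla\nabla^* + \nabla^*\nabla$. For the reverse inclusion, take $s$ with $\Delta_{\Theta,\nabla}s = 0$ and compute
$$
0 = \scal{\Delta_{\Theta,\nabla}s}{s}_\Theta = \scal{\nabla^*\nabla s}{s}_\Theta + \scal{\nabla\nabla^* s}{s}_\Theta = \norme{\nabla s}_\Theta^2 + \norme{\nabla^* s}_\Theta^2,
$$
where the last equality uses that $\nabla$ and $\nabla^*$ are formal adjoints (and $X$ is compact, so integration by parts has no boundary term). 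Since both terms are nonnegative, $\nabla s = 0$, so $s \in \ker(\nabla)$; in particular $s$ is $\nabla$-parallel. Here I should be slightly careful: on $\Omega^0(X,E)$ one has $\nabla^* s = 0$ automatically since $\nabla^*$ lowers degree below $0$, so really the content is just $\nabla s = 0$, but the pairing argument handles everything uniformly. One subtlety worth a line: the formal self-adjointness of $\Delta_{\Theta,\nabla}$ with respect to $\scal{\cdot}{\cdot}_\Theta$ was asserted in the text after the definition of the Laplacians, so I may invoke it directly.

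\medskip

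\noindent\textbf{Second claim: if $\Lambda_\Theta\i F_\nabla = 0$ then $\ker(\Delta_{\Theta,\nabla}) = \ker(\dbar)$.}
The key input is the last Kähler identity of Lemma~\ref{LEM:Identités de Kähler}, namely $\Delta_{\Theta,\partial} - \Delta_{\Theta,\dbar} = \Lambda_\Theta\i F_\nabla$ on $\Omega^0(X,E)$. Under the hypothesis $\Lambda_\Theta\i F_\nabla = 0$, this gives $\Delta_{\Theta,\partial} = \Delta_{\Theta,\dbar}$ on sections, and combined with $\Delta_{\Theta,\nabla} = \Delta_{\Theta,\partial} + \Delta_{\Theta,\dbar}$ we get $\Delta_{\Theta,\nabla} = 2\Delta_{\Theta,\dbar}$ on $\Omega^0(X,E)$. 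Hence $\ker(\Delta_{\Theta,\nabla}) = \ker(\Delta_{\Theta,\dbar})$. It then remains to identify $\ker(\Delta_{\Theta,\dbar})$ with $\ker(\dbar)$, which is the same pairing argument as above: for $\Delta_{\Theta,\dbar}s = 0$,
$$
0 = \scal{\Delta_{\Theta,\dbar}s}{s}_\Theta = \norme{\dbar s}_\Theta^2 + \norme{\dbar^* s}_\Theta^2 = \norme{\dbar s}_\Theta^2,
$$
the last term vanishing because $\dbar^*$ lowers $(p,q)$-degree out of range on sections, so $\dbar s = 0$; the converse inclusion $\ker(\dbar) \subset \ker(\Delta_{\Theta,\dbar})$ is clear. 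I don't anticipate a genuine obstacle here — the whole argument is a direct consequence of the lemma — but the one point demanding care is making sure all manipulations are restricted to $\Omega^0(X,E)$, since the Kähler identities in Lemma~\ref{LEM:Identités de Kähler} are only proven on sections and $1$-forms in this balanced (non-Kähler) setting, and the identity $\Delta_{\Theta,\partial} - \Delta_{\Theta,\dbar} = \Lambda_\Theta\i F_\nabla$ genuinely fails on higher-degree forms. A closing remark could observe that $\ker(\dbar)$ on $\Omega^0(X,E)$ is by definition the space of $\dbar$-holomorphic sections of $E$, which is the phrasing used in the statement.
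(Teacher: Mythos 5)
Your proposal is correct and follows essentially the same route as the paper: the paper also reduces the first claim to $\Delta_{\Theta,\nabla} = \nabla^*\nabla$ on sections (your pairing argument is the standard justification that $\ker(\nabla^*\nabla)=\ker(\nabla)$ on a compact manifold), and for the second claim it likewise invokes Lemma~\ref{LEM:Identités de Kähler} to get $\Delta_{\Theta,\nabla} = 2\Delta_{\Theta,\dbar} = 2\dbar^*\dbar$ under the hypothesis $\Lambda_\Theta\i F_\nabla = 0$. Your added care about restricting all identities to $\Omega^0(X,E)$ is exactly the right caveat in this balanced, non-Kähler setting.
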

\begin{proof}
The first equality is a direct consequence of the fact that $\Delta_{\Theta,\nabla} = \nabla^*\nabla$ on smooth forms. The second one comes from Lemma \ref{LEM:Identités de Kähler}, which implies that $\Delta_{\Theta,\nabla} = \Delta_{\Theta,\partial} + \Delta_{\Theta,\dbar}$ and if $\Lambda_\Theta\i F_\nabla = 0$, $\Delta_{\Theta,\partial} = \Delta_{\Theta,\dbar}$ hence $\Delta_{\Theta,\nabla} = 2\Delta_{\Theta,\dbar} = 2\dbar^*\dbar$.
\end{proof}

By abuse of notation, we still call $\nabla = \partial + \dbar$ the natural connection on $\End(E)$ that arises from $\nabla$ on $E$.

\subsection{Gauge action and its derivative}

Let us to introduce the \textit{complex gauge group} $\G^\C$ of smooth sections of $\End(E)$ which are isomorphisms on each fibre. It is an infinite dimensional complex Lie group and $\Lie(\G^\C) = \Omega^0(X,\End(E))$. This group acts on connections on $E$ with,
$$
f \cdot \partial = f^{-1*} \circ \partial \circ f^*, \qquad f \cdot \dbar = f \circ \dbar \circ f^{-1}, \qquad f \cdot \nabla = f \cdot \partial + f \cdot \dbar.
$$
Notice that this action can also be written by using the Dolbeault operator on $\End(E)$ induced by $\dbar$ (we call it $\dbar$ too).
\begin{equation}\label{EQ:Action de gauge}
    f \cdot \dbar = f \circ \dbar \circ f^{-1} = \dbar + f\dbar(f^{-1}).
\end{equation}
The adjoints are computed with respect to the metric $h$. Notice that this action preserves the integrability condition of the Dolbeault operator $\dbar^2 = 0$ and it preserves the fact to be $h$-unitary. When $\dbar_1 = f \cdot \dbar_2$, the holomorphic vector bundles $\E_1 = (E,\dbar_1)$ and $\E_2 = (E,\dbar_2)$ are isomorphic because $f : \E_2 \rightarrow \E_1$ is a biholomorphism. Conversely, such an isomorphism gives rise to a gauge equivalence between the Dolbeault operators of $\E_1$ and $\E_2$.

Similarly, we call $\G \subset \G^\C$ the \textit{unitary gauge group} defined as the set of all $u \in \G^\C$ such that $uu^* = \Id_E$. It is an infinite dimensional real Lie group with $\Lie(\G) = \i\Omega^0(X,\End_H(E,h))$ where $\End_H(E,h) \subset \End(E)$ is the smooth real bundle of Hermitian endomorphisms with respect to $h$. Notice that the action of $\G$ on connections is given by,
$$
u \cdot \nabla = u \circ \nabla \circ u^*,
$$
thus the associated curvature verifies,
\begin{equation}\label{EQ:Action gauge unitaire}
    F_{u \cdot \nabla} = uF_\nabla u^*.
\end{equation}

We will be interested in finding a Dolbeault operator $\dbar$ on $E$ which satisfies the property of being Hermitian Yang--Mills such that $(E,\dbar)$ is isomorphic to a given holomorphic vector bundle $\E = (E,\dbar_E)$. Therefore, we shall search $\dbar$ in the $\G^\C$-orbit of $\dbar_E$. For this, it will be useful to compute infinitesimal variations of $f \mapsto f \cdot \dbar_E$.

\begin{proposition}\label{PRO:Calcul dPsi(0)}
    For any $h$-unitary connection $\nabla$, the map,
    $$
    \Psi : \fonction{\Omega^0(X,\End_H(E,h))}{\Omega^0(X,\End_H(E,h))}{s}{\Lambda_\Theta\i F_{\e^s \cdot \nabla}},
    $$
    verifies $d\Psi(0) = \Delta_{\Theta,\nabla}$ (on $\End(E)$). Notice that since $\Delta_{\Theta,\nabla}$ is a real operator, it preserves $\End_H(E,h)$.
\end{proposition}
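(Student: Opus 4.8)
The plan is to differentiate the curvature of the one-parameter family $\nabla_t := \e^{ts} \cdot \nabla$ at $t = 0$ and then recognise the answer through the Kähler identities of Lemma \ref{LEM:Identités de Kähler}, applied on the bundle $\End(E)$ equipped with its induced metric and its induced integrable $h$-unitary connection (still written $\nabla = \partial + \dbar$, following the convention fixed just before the statement). Since $s \mapsto \e^s \cdot \nabla$ is smooth, it suffices to compute $\frac{d}{dt}\big|_{t=0}\Lambda_\Theta \i F_{\nabla_t}$ and apply it to an arbitrary $s \in \Omega^0(X,\End_H(E,h))$.

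First I would linearise the gauge action. By \eqref{EQ:Action de gauge} the $(0,1)$-part of $\nabla_t$ is $\e^{ts} \cdot \dbar = \dbar + \e^{ts}\dbar(\e^{-ts})$; and because $s$ is $h$-Hermitian one has $(\e^{ts})^* = \e^{ts}$, hence $f^{-1*} = \e^{-ts}$ for $f = \e^{ts}$, so the $(1,0)$-part is $\e^{ts} \cdot \partial = \partial + \e^{-ts}\partial(\e^{ts})$ by the Leibniz rule (with $\partial$, $\dbar$ denoting the induced operators on $\End(E)$). Differentiating both at $t = 0$, and using that the constant section $\Id_E$ is parallel, gives $\frac{d}{dt}\big|_{0}(\e^{ts}\cdot\dbar) = -\dbar s$ and $\frac{d}{dt}\big|_{0}(\e^{ts}\cdot\partial) = \partial s$, so the infinitesimal variation of the connection is $a := \frac{d}{dt}\big|_{0}\nabla_t = \partial s - \dbar s \in \Omega^1(X,\End(E))$.

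Next I would use the standard first-order expansion $F_{\nabla + tb} = F_\nabla + t\,\nabla b + \mathrm{O}(t^2)$ of the curvature, which yields $\frac{d}{dt}\big|_{0}F_{\nabla_t} = \nabla a = (\partial + \dbar)(\partial s - \dbar s)$. Because $\nabla$ is the Chern connection of an integrable Dolbeault operator, $F_\nabla$ is of type $(1,1)$, hence so is the curvature of the induced connection on $\End(E)$, and therefore its $(2,0)$- and $(0,2)$-components $\partial^2$ and $\dbar^2$ (on $\End(E)$) both vanish. Expanding, $\frac{d}{dt}\big|_{0}F_{\nabla_t} = \dbar\partial s - \partial\dbar s$. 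Applying $\i\Lambda_\Theta$ and invoking Lemma \ref{LEM:Identités de Kähler} on $\Omega^0(X,\End(E))$ — legitimate since the induced connection is integrable and $h$-unitary — gives $d\Psi(0)(s) = \i\Lambda_\Theta(\dbar\partial - \partial\dbar)s = \Delta_{\Theta,\nabla}s$. For the parenthetical remark, $\Delta_{\Theta,\nabla} = \nabla^*\nabla$ commutes with the $h$-adjoint $A \mapsto A^*$ because $\nabla$ is $h$-unitary (so $\nabla A^* = (\nabla A)^*$, and likewise for $\nabla^*$), hence it preserves $\End_H(E,h)$.

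The only point requiring genuine care is the linearisation of the $(1,0)$-part in the first step: the gauge action on $\partial$ is by $f^{-1*} \circ \partial \circ f^*$ rather than by conjugation by $f$, and it is exactly the hypothesis $s = s^*$ that makes $f^{-1*}f^* = \Id_E$ along the family $f = \e^{ts}$, producing the sign pattern $\partial s - \dbar s$ — the "anti-holomorphic" combination that feeds into the Kähler identity for $\Delta_{\Theta,\nabla}$. Everything else is routine first-order calculus together with the pointwise identities $\partial^2 = \dbar^2 = 0$ on $\End(E)$.
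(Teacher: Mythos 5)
Your proposal is correct and follows essentially the same route as the paper: linearise the gauge action on the two components of the Chern connection (using that $s$ is $h$-Hermitian to get the variation $\partial s - \dbar s$), observe that the quadratic term in the curvature contributes nothing at first order, and then identify $\i\Lambda_\Theta(\dbar\partial - \partial\dbar)$ with $\Delta_{\Theta,\nabla}$ via Lemma \ref{LEM:Identités de Kähler}. The only cosmetic differences are that you differentiate along the path $t \mapsto \e^{ts}$ rather than directly in $s$, and that you spell out the vanishing of $\partial^2$ and $\dbar^2$ on $\End(E)$ and the reality of $\Delta_{\Theta,\nabla}$, both of which the paper leaves implicit.
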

\begin{proof}
We have for all smooth Hermitian sections $s$ of $\End(E)$,
$$
\e^s \cdot \nabla = \nabla + \e^s\dbar(\e^{-s}) + \e^{-s}\partial(\e^s),
$$
hence,
$$
F_{\e^s \cdot \nabla} = F_\nabla + \nabla\alpha_s + \alpha_s \wedge \alpha_s,
$$
where $\alpha_s = \e^s\dbar(\e^{-s}) + \e^{-s}\partial(\e^s) \in \i\Omega^1(X,\End_H(E,h))$. We have $\alpha_0 = 0$ and if $v \in T_s\Omega^0(X,\End_H(E,h)) = \Omega^0(X,\End_H(E,h))$,
$$
\frac{\partial}{\partial s}|_{s = 0}(\e^s\dbar(\e^{-s})) \cdot v = \frac{\partial}{\partial s}|_{s = 0}(\e^s) \cdot v\dbar(\e^{-0}) + \e^0\dbar\left(\frac{\partial}{\partial s}|_{s = 0}(\e^{-s}) \cdot v\right) = -\dbar v.
$$
Hence $\frac{\partial}{\partial s}|_{s = 0}\alpha_s \cdot v = \partial v - \dbar v$ symmetrically. Since $\alpha_s \wedge \alpha_s$ is quadratic and $\alpha_0 = 0$, its derivative at $0$ vanishes thus,
$$
\frac{\partial}{\partial s}|_{s = 0}\i F_{\e^s \cdot \nabla} = \i\nabla(\partial v - \dbar v) = \i(\dbar\partial - \partial\dbar)v.
$$
After applying $\Lambda_\Theta$, by Lemma \ref{LEM:Identités de Kähler}, we deduce that $d\Psi(0) = \Delta_{\Theta,\nabla}$.
\end{proof}

\section{Finiteness results about slope stability and the Hermitian Yang--Mills equation}\label{SEC:Finitude des c_1}

\subsection{Slope stability}

Any holomorphic vector bundle can be seen as the locally free coherent sheaf of its holomorphic sections. In this subsection, we study more generally coherent sheaves on $X$ and their slope stablity with respect to a positive real Dolbeault class $[\Theta]$.
\begin{definition}
    We define the $[\Theta]$-slope $\mu_{[\Theta]}$ of a non-zero torsion-free coherent sheaf $\E$ as
    $$
    \mu_{[\Theta]}(\E) = \frac{c_1(\E) \cup [\Theta]}{\rk(\E)}.
    $$
    Following Mumford \cite{Mumford}, we say that,
    \begin{itemize}
        \item $\E$ is $[\Theta]$-stable if for all coherent sub-sheaves $\mathcal{S} \subset \E$ with $0 < \rk(\mathcal{S}) < \rk(\E)$,
        $$
        \mu_{[\Theta]}(\mathcal{S}) < \mu_{[\Theta]}(\E).
        $$
        \item $\E$ is $[\Theta]$-semi-stable if for all coherent sub-sheaves $\mathcal{S} \subset \E$ with $0 < \rk(\mathcal{S}) < \rk(\E)$,
        $$
        \mu_{[\Theta]}(\mathcal{S}) \leq \mu_{[\Theta]}(\E).
        $$
        \item $\E$ is $[\Theta]$-polystable if $\E$ is a direct sum of stable coherent sheaves of the same $[\Theta]$-slope.
    \end{itemize}
\end{definition}

Clearly, stability implies polystability. When $\E$ can be written as a direct sum $\bigoplus_{i = 1}^m \E_i$ (or more generally as an extension of the $\E_i$), then $c_1(\E) = \sum_{i = 1}^m c_1(\E_i)$ thus $\mu_{[\Theta]}(\E)$ is a weighted average of the $\mu_{[\Theta]}(\E_i)$ where the weights are the $\rk(\E_i)$. Therefore, polystability implies semi-stability.

In the case where $\E$ is only $[\Theta]$-semi-stable, we may use a Jordan-Hölder filtration to reduce to the polystable case.
\begin{proposition}\label{PRO:FJH}
    If $\E$ is a $[\Theta]$-semi-stable vector bundle, then it admits a Jordan-Hölder filtration, \textit{i.e.} a filtration by coherent sub-sheaves $0 \subsetneq \F_1 \subsetneq \cdots \subsetneq \F_m = \E$ such that for all $k$, $\mu_{[\Theta]}(\F_k) = \mu_{[\Theta]}(\E)$ and each $\G_k = \F_k/\F_{k - 1}$ is torsion-free and $[\Theta]$-stable. Moreover, the graded object,
    $$
    \Gr_{[\Theta]}(\E) = \bigoplus_{k = 1}^m \G_k,
    $$
    is unique up to isomorphism outside of a codimension $2$ sub-analytic space of $X$. In particular, its reflexive closure $\Gr_{[\Theta]}(\E)^{\lor\lor}$ is unique up to isomorphism.
\end{proposition}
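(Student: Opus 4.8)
The plan is to adapt the classical proof of existence and uniqueness of Jordan--Hölder filtrations for $\mu$-semi-stable sheaves (as in the classical Kähler case, cf.\ \cite{Kobayashi}) to the balanced setting. The only features of the polarisation that will be used are that $[\Theta]$ is represented by a \emph{closed} positive $(n-1,n-1)$-form, so that $[Z] \cup [\Theta] = \int_Z \Theta > 0$ for every nonzero effective analytic cycle $Z$ of codimension one, together with additivity of $c_1$ along short exact sequences of coherent sheaves. It is convenient to prove both assertions for arbitrary torsion-free coherent sheaves rather than only for bundles, so that one may freely pass to torsion-free quotients along an induction on the rank.

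\textbf{Existence.} I would argue by induction on $\rk(\E)$. If $\E$ is $[\Theta]$-stable, $0 \subsetneq \E$ already works. Otherwise $\E$ is strictly semi-stable, so it has a proper nonzero subsheaf of slope $\mu_{[\Theta]}(\E)$; I pick one, $\F_1$, of minimal rank (the rank lies in $\{1,\dots,\rk(\E)-1\}$, so the minimum exists) and replace it by its saturation in $\E$, which leaves the rank unchanged and, by semi-stability of $\E$ and positivity of $[\Theta]$ (the class added to $c_1$ under saturation is effective), also the slope. Semi-stability of $\E$ and minimality of $\rk(\F_1)$ then force every proper nonzero subsheaf of $\F_1$ to have slope strictly less than $\mu_{[\Theta]}(\E)$, so $\F_1 = \G_1$ is stable, and being saturated it is torsion-free with $\E/\F_1$ torsion-free. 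A weighted-average (``mediant'') computation with $c_1 \cup [\Theta]$ shows $\E/\F_1$ is again semi-stable of slope $\mu_{[\Theta]}(\E)$; applying the induction hypothesis to it and pulling back the resulting filtration yields a Jordan--Hölder filtration of $\E$ with all steps saturated, hence with all graded pieces torsion-free.

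\textbf{Uniqueness.} This is the genuine difficulty, and I expect it to be the main obstacle: when one tries to ``remove the first graded piece'' of a second filtration, the relevant quotient need not be torsion-free, so one cannot simply work inside an abelian category of semi-stable sheaves of fixed slope. I would get around this by comparing graded objects only modulo analytic subsets of codimension $\geq 2$. The key lemma is: \emph{if $\G,\G'$ are stable torsion-free sheaves with $\mu_{[\Theta]}(\G)=\mu_{[\Theta]}(\G')$, then every nonzero morphism $\G \to \G'$ is injective, of full rank, with torsion cokernel; moreover $c_1$ of that cokernel is effective with $c_1 \cup [\Theta] = 0$, hence the cokernel is supported in codimension $\geq 2$ and the morphism is an isomorphism off a codimension $\geq 2$ analytic subset} (injectivity follows from stability of $\G$ and semi-stability of $\G'$, full rank from stability of $\G'$, and the last claim from the positivity of $[\Theta]$). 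With this in hand I would strengthen the statement to \emph{generalized Jordan--Hölder filtrations}, whose graded pieces are allowed to be either stable torsion-free of slope $\mu_{[\Theta]}(\E)$ or torsion sheaves supported in codimension $\geq 2$; the graded object modulo codimension $2$ is then the direct sum of the stable pieces. Given two such filtrations $(\F_k)_k$ and $(\F'_k)_k$ of a torsion-free semi-stable $\E$, with $\G_1=\F_1$ saturated, let $i$ be minimal with $\F_1 \subseteq \F'_i$; the composite $\F_1 \hookrightarrow \F'_i \twoheadrightarrow \G'_i$ is nonzero, so by the lemma $\F_1 \cong \G'_i$ off codimension $2$, and $\F_1 \cap \F'_{i-1} = 0$. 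One then checks that
$$
0 \subsetneq \F_1 \subsetneq \F_1 + \F'_1 \subsetneq \cdots \subsetneq \F_1 + \F'_{i-1} \subseteq \F'_i \subsetneq \F'_{i+1} \subsetneq \cdots \subsetneq \F'_{m'} = \E
$$
(with the obvious modification when $i=1$) is a generalized Jordan--Hölder filtration with graded pieces $\F_1, \G'_1,\dots,\G'_{i-1}$, one torsion sheaf supported in codimension $\geq 2$, and $\G'_{i+1},\dots,\G'_{m'}$, hence with graded modulo codimension $2$ equal to $\F_1 \oplus \bigoplus_{j \neq i}\G'_j \cong \bigoplus_j \G'_j$ modulo codimension $2$. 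Since this filtration and $(\F_k)_k$ both begin with the subsheaf $\F_1$, quotienting by $\F_1$ (legitimate since $\E/\F_1$ is torsion-free) produces two generalized Jordan--Hölder filtrations of $\E/\F_1$ with gradeds $\bigoplus_{k\geq 2}\G_k$ and $\bigoplus_{j\neq i}\G'_j$ modulo codimension $2$; by the induction on rank these agree modulo codimension $2$, and adding back $\G_1 = \F_1 \cong \G'_i$ closes the induction. The base case ($\E$ stable, in particular $\rk(\E)=1$) is immediate, since then every stable subsheaf of slope $\mu_{[\Theta]}(\E)$ equals $\E$ off codimension $2$.

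Finally, the ``in particular'' statement: two torsion-free sheaves isomorphic off a codimension $\geq 2$ analytic subset have isomorphic double duals, because $(\cdot)^{\lor\lor}$ is unaffected by modifications in codimension $\geq 2$ (reflexive sheaves are normal and satisfy Hartogs extension). Hence $\Gr_{[\Theta]}(\E)^{\lor\lor}$ does not depend on the chosen Jordan--Hölder filtration, which gives the claim.
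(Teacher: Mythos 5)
The paper itself does not prove this proposition: it simply cites \cite{Huybrechts_Lehn} (Theorem 1.6.7, Corollary 1.6.10) as a standard result, so your proposal is necessarily more detailed than the paper's argument. Your existence proof (induction on rank, a minimal-rank destabilising subsheaf, saturation, the see-saw computation) is correct, and so is your key lemma on nonzero morphisms between stable torsion-free sheaves of equal slope, including the use of the closed positive form $\Theta$ to kill the divisorial part of the cokernel. The final reduction from ``isomorphic off codimension $2$'' to ``isomorphic reflexive hulls'' is also fine.

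There is, however, a genuine gap in the uniqueness induction. Having enlarged the class of filtrations to generalized ones whose graded pieces may be torsion sheaves supported in codimension $\geq 2$, you take $i$ minimal with $\F_1 \subseteq \F'_i$ and apply the key lemma to the nonzero composite $\F_1 \to \G'_i$. But in a generalized filtration $\G'_i$ may itself be one of the torsion pieces, and then the lemma does not apply: a nonzero map from the torsion-free stable $\F_1$ to a sheaf supported in codimension $\geq 2$ has full-rank kernel $\F_1 \cap \F'_{i-1}$, and the asserted conclusion ``$\F_1 \cong \G'_i$ off codimension $2$'' is false. This case does arise in your recursion: already at the first inductive step you compare the honest filtration $(\F_k/\F_1)_{k \geq 2}$ of $\E/\F_1$ with a generalized filtration containing one torsion piece, and nothing prevents $\G_2$ from first landing in that torsion piece. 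You must either treat this case separately (when $\G'_i$ is torsion, $\F_1 \cap \F'_{i-1}$ is a full-rank subsheaf of $\F_1$ agreeing with it off codimension $2$ and contained in $\F'_{i-1}$, so one can descend the index at the cost of replacing $\F_1$ by such a subsheaf, which in turn requires weakening the lemma and the saturation hypotheses), or, more cleanly, run the whole uniqueness argument in the Serre quotient category $\Coh(X)/\Coh_{\geq 2}(X)$, where sheaves supported in codimension $\geq 2$ become zero, the semi-stable torsion-free sheaves of slope $\mu_{[\Theta]}(\E)$ form an abelian category of finite length whose simple objects are the stable ones, and the abstract Jordan--H\"older theorem gives uniqueness of the graded object directly. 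That categorical argument is essentially the one your ``generalized filtrations'' are trying to unroll, and it avoids the problematic case altogether.
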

\begin{proof}
This is a standard result. See for example \cite[Theorem 1.6.7, Corollary 1.6.10]{Huybrechts_Lehn}.
\end{proof}

\begin{definition}
    When $\E$ is locally free \textit{i.e.} a vector bundle, it is said to be \textit{sufficiently smooth} if its graded object $\Gr_{[\Theta]}(\E)$ can be chosen to be locally free too, or equivalently, each $\G_i$ can be chosen to be locally free.
\end{definition}
\begin{remark}
    When $F \subset E$ are smooth vector bundles, the Hermitian metric $h$ on $E$ gives rise to a diffeomorphism between $E/F$ and $F^\bot$. In other words, any exact sequence of smooth vector bundles splits. In particular, $\E$ and $\Gr_{[\Theta]}(\E)$ have the same smooth structure when $\E$ is a sufficiently smooth vector bundle.
\end{remark}
\begin{remark}
    $\E$ is $[\Theta]$-polystable if and only if it is $[\Theta]$-semi-stable and $\Gr_{[\Theta]}(\E) \cong \E$.
\end{remark}

\subsection{Stable and semi-stable polyhedral cones}

Let $\E$ be a torsion-free coherent sheaf. We are interested here in the structure of the set of $\Theta$ such that $\E$ is $[\Theta]$-stable (resp. $[\Theta]$-semi-stable). Let us introduce,
$$
\mC_s(\E) = \{\Theta > 0|d\Theta = 0 \textrm{ and $\E$ is $[\Theta]$-stable}\}, \qquad \mC_{ss}(\E) = \{\Theta > 0|d\Theta = 0 \textrm{ and $\E$ is $[\Theta]$-semi-stable}\},
$$
and their real Dolbeault projection counterparts,
$$
\mC_s^{\mathrm{D}}(\E) = \{[\Theta]|\Theta \in \mC_s(\E)\} = \{[\Theta] \in \mathcal{B}_X|\E \textrm{ is $[\Theta]$-stable}\},
$$
$$
\mC_{ss}^{\mathrm{D}}(\E) = \{[\Theta]|\Theta \in \mC_{ss}(\E)\} = \{[\Theta] \in \mathcal{B}_X|\E \textrm{ is $[\Theta]$-semi-stable}\}.
$$
When $\mathcal{S} \subset \E$ verifies $0 < \rk(\mathcal{S}) < \rk(\E)$, let,
$$
l_{\mathcal{S}} : \fonction{H^{n - 1,n - 1}(X,\R)}{\R}{[\Theta]}{\mu_{[\Theta]}(\E/\mathcal{S}) - \mu_{[\Theta]}(\mathcal{S}) = \left(\frac{c_1(\E/\mathcal{S})}{\rk(\E/\mathcal{S})} - \frac{c_1(\mathcal{S})}{\rk(\mathcal{S})}\right) \wedge [\Theta]}.
$$
Each $l_{\mathcal{S}}$ is a linear form on $H^{n - 1,n - 1}(X,\R)$ and the see-saw property of slopes (see for example \cite[Proposition 5.7.4]{Kobayashi}) implies that for each $[\Theta] \in \mathcal{B}_X$, $\E$ is $[\Theta]$-stable (resp $[\Theta]$-semi-stable) if and only if for all $\mathcal{S}$, $l_{\mathcal{S}}([\Theta]) > 0$ (resp. $\geq 0$). In other words, we have,
$$
\mC_s^{\mathrm{D}}(\E) = \mathcal{B}_X \cap \bigcap_{\mathcal{S} \subset \E}\{l_{\mathcal{S}} > 0\}, \qquad \mC_{ss}^{\mathrm{D}}(\E) = \mathcal{B}_X \cap \bigcap_{\mathcal{S} \subset \E}\{l_{\mathcal{S}} \geq 0\},
$$
In particular, these four sets all are convex cones.

\subsection{Local finiteness}

The purpose of this subsection is to show that the number of linear inequalities $l_{\mathcal{S}} > 0$ to verify in order to obtain stability is locally finite, so $\mC_{ss}^{\mathrm{D}}(\E)$ and $\mC_s^{\mathrm{D}}(\E)$ are locally polyhedral convex cones.

\begin{lemma}\label{LEM:Finitude locale}
    Let $V$ be a finite dimensional real vector space and $D \subset V$ a non-empty discrete subset. Let $U \subset V^\lor$ be an open subset of the dual of $V$ and such that for all $\varphi \in U$, $\varphi(D) \subset \R$ is bounded from above. Then, for all compact $K \subset U$, there is a finite set $F \subset D$ that may depend on $K$ such that for all $\varphi \in K$, $\sup(\varphi(D)) = \max(\varphi(F))$.
\end{lemma}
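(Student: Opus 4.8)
The plan is to reduce the statement to a local finiteness argument using compactness, after first understanding why the $\sup$ is actually attained. Fix a compact $K \subset U$. The key geometric fact is this: for each $\varphi \in K$, the supremum $s(\varphi) := \sup(\varphi(D))$ is finite by hypothesis, but moreover it is attained, because $\varphi$ is a linear form and $D$ is discrete and closed in $V$ (a discrete subset of a finite-dimensional space is closed), so the set $\{v \in D : \varphi(v) \geq s(\varphi) - 1\}$ is the intersection of $D$ with a closed half-space, which meets only finitely many points of $D$ — here one needs that $D$ is locally finite, which follows from discreteness together with the fact that $\varphi$ being bounded above on $D$ forces $D$ to not escape to infinity "in the $\varphi$ direction"; more carefully, one shows that $\varphi^{-1}([s(\varphi)-1, s(\varphi)]) \cap D$ is compact, hence finite. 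Actually the cleanest route: discreteness of $D$ alone does not make half-space slices finite (e.g. $D = \Z \times \{0\} \subset \R^2$), so the boundedness hypothesis is essential and must be used pointwise to control each slice.

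First I would prove the pointwise statement: for each fixed $\varphi \in U$, there is a finite $F_\varphi \subset D$ with $\sup(\varphi(D)) = \max(\varphi(F_\varphi))$. To do this, pick any $v_0 \in D$ and set $c = \varphi(v_0)$; then $s(\varphi) \geq c$, and any maximising behaviour happens inside $D_\varphi := \{v \in D : \varphi(v) \geq c\}$. I claim $D_\varphi$ is finite: it is discrete, and it is bounded, because if it contained a sequence $v_k$ with $\norme{v_k} \to \infty$, then passing to a subsequence $v_k/\norme{v_k} \to u$ for some unit vector $u$, and since $U$ is open we can find $\psi \in U$ with $\psi(u) > 0$; but then $\psi(v_k) = \norme{v_k}\psi(v_k/\norme{v_k}) \to +\infty$, contradicting that $\psi$ is bounded above on $D$. (One has to be slightly careful: we need $\psi(u)>0$, which is possible since $U$ is open around $\varphi$ and we may perturb $\varphi$; if $\varphi(u) \geq 0$ already take $\psi = \varphi + t\varphi'$ for suitable small $t$ and $\varphi'$ with $\varphi'(u)>0$, and if $\varphi(u)<0$ then $\varphi(v_k)\to-\infty<c$, already a contradiction.) So $D_\varphi$ is discrete and bounded, hence finite, and $F_\varphi := D_\varphi$ works.

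The main obstacle — and the reason the hypothesis "$K$ compact" (rather than just "$K$ a point") matters — is to produce a \emph{single} finite set $F$ that works for all $\varphi \in K$ simultaneously. Here the plan is: for each $\varphi \in K$, by the pointwise result there is a finite $F_\varphi$ and, by discreteness, an open neighbourhood $U_\varphi \ni \varphi$ in $U$ small enough that the same conclusion $s(\psi) = \max(\psi(F_\varphi))$ holds for all $\psi \in U_\varphi$ — this requires a short continuity/openness argument: one shows that the finite set of "active" points stays active under small perturbation, and that no far-away point of $D$ can suddenly become active, using again the boundedness-of-slices estimate, now made uniform over a small ball around $\varphi$. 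Then $\{U_\varphi\}_{\varphi \in K}$ is an open cover of the compact $K$; extract a finite subcover $U_{\varphi_1}, \ldots, U_{\varphi_N}$ and set $F = F_{\varphi_1} \cup \cdots \cup F_{\varphi_N}$, a finite subset of $D$. For any $\varphi \in K$ we have $\varphi \in U_{\varphi_i}$ for some $i$, whence $s(\varphi) = \max(\varphi(F_{\varphi_i})) \leq \max(\varphi(F)) \leq s(\varphi)$, so $s(\varphi) = \max(\varphi(F))$ as required. The delicate point to get right in writing this up is the uniform control in the neighbourhood $U_\varphi$: one wants, say, a constant $R$ and neighbourhood $U_\varphi$ such that for all $\psi \in U_\varphi$, every $v \in D$ with $\psi(v) \geq s(\psi) - 1$ lies in the ball of radius $R$ — this follows by a compactness-of-the-unit-sphere argument identical in spirit to the one above, applied to the compact set $\overline{U_\varphi}$.
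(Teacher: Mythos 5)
Your proposal is correct and follows essentially the same route as the paper: the core of both arguments is the same slice-finiteness claim (the points of $D$ on which some functional taken from a compact subset of $U$ is $\geq a$ form a finite set), proved by the same device of renormalising an unbounded sequence to the unit sphere and perturbing the functional inside the open set $U$ to contradict boundedness from above. The only difference is organisational: you glue the local statements via a finite subcover and a union of finite sets, whereas the paper first proves continuity of $\varphi \mapsto \max(\varphi(D))$ and then applies slice-finiteness once with $a = \min_{\varphi \in K}\max(\varphi(D))$; the two packagings are interchangeable.
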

\begin{proof}\ \\
\vspace{1mm}
\noindent\textbf{Step 1 :} For all real number $a$, for all $K \subset U$ compact, the set $F_{K,a} = \bigcup_{\varphi \in K} \{\varphi \geq a\} \cap D$ is finite.
\vspace{1mm}

Indeed, $F_{K,a} \subset D$ is discrete so if it is infinite, then it is unbounded. In this case, there is a sequence $(v_m)$ of elements of $F_{K,a}$ such that $\norme{v_m} \rightarrow +\infty$ where $\norme{\cdot} = \sqrt{\scal{\cdot}{\cdot}}$ is any Euclidean norm on $V$. Up to extracting, $\frac{v_m}{\norme{v_m}} \rightarrow v_\infty \in V$ is in the unit sphere. For all fixed $m$, $v_m \in \bigcup_{\varphi \in K} \{\varphi \geq a\}$ so there is a $\varphi_m \in K$ such that $\varphi_m(v_m) \geq a$. Up to extracting again, $\varphi_m \rightarrow \varphi_\infty \in K$.

Let $\epsilon > 0$ and $\varphi = \varphi_\infty + \epsilon\scal{v_\infty}{\cdot}$. Since $U$ is open, for $\epsilon$ small enough, $\varphi \in U$. For all integer $m$,
\begin{align*}
    \varphi(v_m) & = \varphi_\infty(v_m) + \epsilon\scal{v_\infty}{v_m}\\
    & = \varphi_m(v_m) + \mathrm{O}(\norme{\varphi_m - \varphi_\infty}\norme{v_m}) + \epsilon\scal{\frac{v_m}{\norme{v_m}} + \mathrm{o}(1)}{v_m}\\
    & \geq a + \epsilon\norme{v_m} + \mathrm{o}(\norme{v_m})\\
    & \tend{m}{+\infty} +\infty.
\end{align*}
This contradicts the boundedness from above of $\varphi(D)$. It proves that $F_{K,a}$ is indeed finite. In particular, when $K = \{\varphi\}$ is a singleton and $a = \sup(\varphi(D)) - 1$, we obtain that for all $\varphi \in U$, the set,
$$
\{\varphi \geq \sup(\varphi(D)) - 1\} \cap D,
$$
is finite. In particular, it proves that the $\sup$ is reached (and it is reached finitely many times).

\vspace{1mm}
\noindent\textbf{Step 2 :} The map $\varphi \mapsto \max(\varphi(D))$ is continuous on $U$.
\vspace{1mm}

Let $\varphi_0 \in U$ and $K \subset U$ be a compact set such that $\varphi_0 \in \inter{K}$. Let $v \in D$ such that $\max(\varphi_0(D)) = \varphi_0(v)$ and $a = \max(\varphi_0(D)) - 1$. Clearly, $v \in F_{K,a}$ and when $\varphi$ is close enough to $\varphi_0$, then $\varphi \in K$ and $\varphi(v) \geq a$. Thus,
$$
\max(\varphi(D)) \geq \varphi(v) \geq a,
$$
so the $w \in D$ that verify $\max(\varphi(D)) = \varphi(w)$ belong to $F_{K,a}$. In other words, for all $\varphi$ close enough to $\varphi_0$, $\max(\varphi(D)) = \max(\varphi(F_{K,a}))$.

Since $F_{K,a}$ is finite, $\varphi \mapsto \max(\varphi(F_{K,a}))$ is continuous at $\varphi_0$, proving the wanted result.

\vspace{1mm}
\noindent\textbf{Step 3 :} Conclusion.
\vspace{1mm}

Let $K \subset U$ be an arbitrary compact set. By the continuity result of step 2,
$$
a = \min_{\varphi \in K}(\max(\varphi(D))),
$$
is well defined and by the result of step 1,
$$
F = F_{K,a} = \bigcup_{\varphi \in K} \{\varphi \geq a\} \cap D,
$$
is finite. For all $\varphi \in K$, if $v \in D$ is such that $\max(\varphi(D)) = \varphi(v)$, then $\varphi(v) \geq a$ so $v \in F$. We deduce that $\sup(\varphi(D))$ is reached by an element in the finite set $F$, which is the wanted result.
\end{proof}

We can now prove Introduction's Proposition \ref{PRO:1}.

\begin{corollary}\label{COR:Finitude locale}
    For all torsion-free coherent sheaf $\E$ and all compact $K \subset \mathcal{B}_X$, there is a finite family $\mathcal{S}_1,\ldots,\mathcal{S}_p$ of coherent sub-sheaves of $\E$ with for all $k$, $0 < \rk(\mathcal{S}_k) < \rk(\E)$ such that for all $[\Theta] \in K$, $\E$ is $[\Theta]$-stable (resp. $[\Theta]$-semi-stable) if and only if for all $k$, $l_{\mathcal{S}_k}([\Theta]) > 0$ (resp. $l_{\mathcal{S}_k}([\Theta]) \geq 0$).
\end{corollary}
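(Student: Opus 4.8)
The plan is to reduce the statement about slope stability to an application of Lemma \ref{LEM:Finitude locale}. The first step is to translate the condition of $[\Theta]$-stability into the language of that lemma. Recall that $\E$ is $[\Theta]$-stable (resp. semi-stable) if and only if $l_{\mathcal{S}}([\Theta]) > 0$ (resp. $\geq 0$) for every coherent sub-sheaf $\mathcal{S} \subset \E$ with $0 < \rk(\mathcal{S}) < \rk(\E)$. Equivalently, writing $c_1(\mathcal{S})/\rk(\mathcal{S}) - c_1(\E)/\rk(\E) \in H^{n-1,n-1}(X,\R)^\lor$ via the cup product, stability amounts to $\sup_{\mathcal{S}} \bigl( \mu_{[\Theta]}(\mathcal{S}) - \mu_{[\Theta]}(\E) \bigr) < 0$ and semi-stability to $\leq 0$. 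So I want to take $V = H^{n-1,n-1}(X,\R)$, let $U \subset V^\lor$ be (a neighbourhood of) the dual cone elements coming from $\mathcal{B}_X$, and take $D$ to be the set of ``normalised first Chern classes'' $\delta_{\mathcal{S}} := \frac{c_1(\mathcal{S})}{\rk(\mathcal{S})} - \frac{c_1(\E)}{\rk(\E)}$ ranging over all destabilising-candidate sub-sheaves $\mathcal{S}$.

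The key point that must be checked is that this set $D$ is \textbf{discrete} in $V^\lor \cong H^{n-1,n-1}(X,\R)^\lor$ (or at least that its relevant portion is). This is where I expect the main obstacle to lie. The issue is that there are infinitely many sub-sheaves $\mathcal{S}$, but their normalised Chern class differences should only take values in a discrete (in fact, after clearing denominators, lattice-like) set. Concretely, the classes $c_1(\mathcal{S})$ live in the image of $H^2(X,\Z) \to H^2(X,\R)$ intersected with $H^{1,1}$, which is a finitely generated group, and the ranks $\rk(\mathcal{S})$ are bounded integers between $1$ and $\rk(\E) - 1$; so $\delta_{\mathcal{S}}$ lies in $\frac{1}{(\rk\E)!}\,\Gamma$ for a fixed lattice $\Gamma$, hence in a discrete subset. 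I should be careful that ``discrete'' here is about the image in the dual space under cup product; since $[\Theta]$ ranges over an open set, the pairing is non-degenerate enough that discreteness downstairs follows from discreteness of the Chern class data upstairs. If there is a subtlety it is that the set of $\delta_{\mathcal{S}}$ could conceivably accumulate — but boundedness of ranks plus integrality of $c_1$ rules this out, so $D$ (or the finite union over the finitely many possible rank values of the relevant translates) is discrete.

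Next I need the boundedness-from-above hypothesis of Lemma \ref{LEM:Finitude locale}: for each fixed $\varphi \in U$, i.e. each fixed $[\Theta] \in \mathcal{B}_X$ (and slightly beyond, to get an open $U$), the set $\{ \varphi(\delta_{\mathcal{S}}) \} = \{ \mu_{[\Theta]}(\mathcal{S}) - \mu_{[\Theta]}(\E) \}$ is bounded above. This is exactly the statement that the maximal destabilising slope is finite, which is a standard boundedness result for torsion-free coherent sheaves on a compact complex manifold with respect to a balanced/Gauduchon class — I would cite the relevant fact (e.g. the existence of the Harder–Narasimhan filtration, or \cite[Theorem 1.6.7]{Huybrechts_Lehn} in the projective case and its balanced analogue). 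To get an \emph{open} $U$ I note that $\mathcal{B}_X$ is open in $H^{n-1,n-1}(X,\R)$, so the corresponding set of functionals $\varphi = (-)\cup[\Theta]$ for $[\Theta]$ in a slightly enlarged open set still satisfies the bounded-above condition (boundedness persists under small perturbations since it is governed by the Bogomolov-type / Grauert–Mülich-type inequalities that are open conditions, or more simply: shrink back to a relatively compact neighbourhood inside $\mathcal{B}_X$).

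Finally I apply Lemma \ref{LEM:Finitude locale} with the given compact $K \subset \mathcal{B}_X$ (viewed inside $U$): it produces a finite set $F \subset D$ such that $\sup_{\mathcal{S}} \varphi(\delta_{\mathcal{S}}) = \max_{\delta \in F} \varphi(\delta)$ for all $\varphi$ corresponding to $[\Theta] \in K$. Picking, for each element of $F$, one sub-sheaf $\mathcal{S}_k$ realising it (finitely many choices), I obtain $\mathcal{S}_1, \dots, \mathcal{S}_p$ with the property that for $[\Theta] \in K$, the sign of $\sup_{\mathcal{S}}\bigl(\mu_{[\Theta]}(\mathcal{S}) - \mu_{[\Theta]}(\E)\bigr)$ is the sign of $\max_k\bigl(\mu_{[\Theta]}(\mathcal{S}_k) - \mu_{[\Theta]}(\E)\bigr)$; equivalently $\E$ is $[\Theta]$-stable iff $l_{\mathcal{S}_k}([\Theta]) > 0$ for all $k$, and $[\Theta]$-semi-stable iff $l_{\mathcal{S}_k}([\Theta]) \geq 0$ for all $k$. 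This is the desired conclusion. The only genuinely non-formal ingredients are (i) discreteness of $D$, handled by integrality of Chern classes plus bounded ranks, and (ii) the boundedness-above of destabilising slopes, which is a citable structural fact about coherent sheaves.
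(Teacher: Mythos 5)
Your proposal is correct and follows essentially the same route as the paper: identify the normalised first Chern classes as a discrete subset $D$ of $H^{1,1}(X,\R)$ (discreteness from integrality of $c_1$ plus bounded denominators $\leq \rk(\E)!$), view $\mathcal{B}_X$ as an open set of functionals via Serre duality, invoke boundedness-above of destabilising slopes, and apply Lemma \ref{LEM:Finitude locale} to the given compact $K$. The only blemish is notational — you declare $V = H^{n-1,n-1}(X,\R)$ but then use $D \subset V$ for the Chern classes and $\varphi \in V^\lor$ for $[\Theta]$, which swaps the roles; the paper takes $V = H^{1,1}(X,\R)$ and $V^\lor \cong H^{n-1,n-1}(X,\R)$, and your argument is otherwise the same.
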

\begin{proof}
Let $V = H^{1,1}(X,\R)$. By Serre duality, $V^\lor$ is naturally identified with $H^{n - 1,n - 1}(X,\R)$ thus we can see $\mathcal{B}_X$ as an open subset of $V^\lor$. Let,
$$
D = \left\{\frac{c_1(\mathcal{S})}{\rk(\mathcal{S})}\middle|\mathcal{S} \subset \E, 0 < \rk(\mathcal{S}) < \rk(\E)\right\}.
$$
The first Chern classes belong to the lattice $H^2(X,\Z)$ hence,
$$
D \subset \frac{1}{\rk(\E)!}H^2(X,\Z),
$$
is discrete. For each fixed $[\Theta] \in \mathcal{B}_X$, $\{c \cup [\Theta]|c \in D\} \subset \R$ is bounded from above. This is a standard fact uses to build Harder-Narasimhan filtrations (see for example \cite[Lemma 5.7.16]{Kobayashi}).

Therefore, we can use Lemma \ref{LEM:Finitude locale} which tells us that for all compact $K \subset \mathcal{B}_X$, there is a finite set $F \subset D$ such that for all $[\Theta] \in K$,
\begin{equation}\label{EQ:Sup atteint}
    \sup\{c \cup [\Theta]|c \in D\} = \max\{c \cup [\Theta]|c \in F\}.
\end{equation}
Let $F = \{c^1,\ldots,c^p\}$ and for all $k$, $\mathcal{S}_k \subset \E$ with $0 < \rk(\mathcal{S}_k) < \rk(\E)$ such that $\frac{c_1(\mathcal{S}_k)}{\rk(\mathcal{S}_k)} = c^k$. This tuple fits. Indeed, when $[\Theta] \in K$,
\begin{align*}
    \E \textrm{ is $[\Theta]$-stable } & \Leftrightarrow \forall \mathcal{S} \subset \E, 0 < \rk(\mathcal{S}) < \rk(\E) \Rightarrow \frac{c_1(\mathcal{S})}{\rk(\mathcal{S})} \cup [\Theta] < \frac{c_1(\E)}{\rk(\mathcal{E})} \cup [\Theta]\\
    & \Leftrightarrow \sup\{c \cup [\Theta]|c \in D\} < \frac{c_1(\E)}{\rk(\E)} \cup [\Theta] \textrm{ because the $\sup$ is reached,}\\
    & \Leftrightarrow \max\{c \cup [\Theta]|c \in F\} < \frac{c_1(\E)}{\rk(\E)} \cup [\Theta] \textrm{ by (\ref{EQ:Sup atteint}),}\\
    & \Leftrightarrow \forall 1 \leq k \leq p, l_{\mathcal{S}_k}([\Theta]) > 0.
\end{align*}
The same equivalence holds with weak inequalities.
\end{proof}

This last corollary enables us to understand better the geometric properties of the stable and semi-stable cones. For the topological structures, we endow the space of $(n - 1,n - 1)$ closed forms with the $\mC^0$ topology. As a consequence of De Rham's theorem, the $\mC^0$ topology makes the space of exact forms closed in the space of closed forms thus its natural projection on $H^{n - 1,n - 1}(X,\R)$ is continuous.

Clearly, $\mC_s(\E)$ is a $\mC^0$-open convex cone in the space of closed $(n - 1,n - 1)$ forms. $\mC_s^{\mathrm{D}}(\E)$ is an open convex cone in $H^{n - 1,n - 1}(X,\R)$. $\mC_{ss}(\E)$ is a $\mC^0$-closed convex cone in the space of balanced metrics. $\mC_{ss}^{\mathrm{D}}(\E)$ is a convex cone in $H^{n - 1,n - 1}(X,\R)$ and is closed in $\mathcal{B}_X$. All of these are trivial consequences of the local finiteness result of Corollary \ref{COR:Finitude locale} and of the continuity of the natural projection on $H^{n - 1,n - 1}(X,\R)$. Moreover, we have the more precise version of Introduction's Theorem \ref{THE:2}.
\begin{theorem}\label{THE:Structures cônes}
    On a balanced manifold $X$, for all torsion-free coherent sheaf $\E$, the stable and semi-stable cones satisfy the following properties,
    \begin{itemize}
        \item $\mC_s^{\mathrm{D}}(\E)$ and $\mC_{ss}^{\mathrm{D}}(\E)$ are locally polyhedral convex cones.
        \item If there is a coherent $\mathcal{S} \subset \E$ such that $0 < \rk(\mathcal{S}) < \rk(\E)$ and $\frac{c_1(\mathcal{S})}{\rk(\mathcal{S})} = \frac{c_1(\E)}{\rk(\E)}$, then $\mC_s(\E) = \mC_s^{\mathrm{D}}(\E) = \emptyset$.
        \item If there is no coherent $\mathcal{S} \subset \E$ such that $0 < \rk(\mathcal{S}) < \rk(\E)$ and $\frac{c_1(\mathcal{S})}{\rk(\mathcal{S})} = \frac{c_1(\E)}{\rk(\E)}$, then $\inter{\mC_{ss}(\E)} = \mC_s(\E)$ and $\inter{\mC_{ss}^{\mathrm{D}}(\E)} = \mC_s^{\mathrm{D}}(\E)$.
        \item If $\mC_s^{\mathrm{D}}(\E) \neq \emptyset$, $\overline{\mC_s(\E)} \cap \{\Theta > 0\} = \mC_{ss}(\E)$ and $\overline{\mC_s^{\mathrm{D}}(\E)} \cap \mathcal{B}_X = \mC_{ss}^{\mathrm{D}}(\E)$.
        \item For all $[\Theta_0] \in \mC_{ss}^{\mathrm{D}}(\E)$, if $[\Theta] \in \mathcal{B}_X$ is close enough to $[\Theta_0]$, $[\Theta] \in \mC_s^{\mathrm{D}}(\E)$ (resp. $[\Theta] \in \mC_{ss}^{\mathrm{D}}(\E)$) if and only if for all $\mathcal{S} \subset \E$ such that $l_{\mathcal{S}}([\Theta_0]) = 0$, $l_{\mathcal{S}}([\Theta]) > 0$ (resp. $l_{\mathcal{S}}([\Theta]) \geq 0$).
    \end{itemize}
\end{theorem}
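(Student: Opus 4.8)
The plan is to derive each bullet point from Corollary \ref{COR:Finitude locale} together with the basic facts about convex cones. Throughout, fix a compact neighbourhood $K \subset \mathcal{B}_X$ of the class(es) under consideration and let $\mathcal{S}_1,\ldots,\mathcal{S}_p$ be the finite family provided by Corollary \ref{COR:Finitude locale}, so that on $K$ the stable (resp. semi-stable) locus is cut out by the finitely many strict inequalities $l_{\mathcal{S}_k} > 0$ (resp. weak inequalities $l_{\mathcal{S}_k} \geq 0$). The first bullet is then immediate: on $K$, $\mC_s^{\mathrm{D}}(\E) = \mathcal{B}_X \cap \bigcap_k \{l_{\mathcal{S}_k} > 0\}$ and $\mC_{ss}^{\mathrm{D}}(\E) = \mathcal{B}_X \cap \bigcap_k \{l_{\mathcal{S}_k} \geq 0\}$, each a finite intersection of (open, resp. closed) half-spaces with the open cone $\mathcal{B}_X$, hence locally polyhedral. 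For the second bullet, if some $\mathcal{S}$ has $c_1(\mathcal{S})/\rk(\mathcal{S}) = c_1(\E)/\rk(\E)$, then $l_{\mathcal{S}} \equiv 0$, so the strict inequality $l_{\mathcal{S}}([\Theta]) > 0$ can never hold and $\mC_s^{\mathrm{D}}(\E) = \mC_s(\E) = \emptyset$.

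For the third bullet, assume no such destabilising-in-$c_1$ subsheaf exists, so every $l_{\mathcal{S}}$ is a nonzero linear form. The inclusion $\mC_s^{\mathrm{D}}(\E) \subset \inter{\mC_{ss}^{\mathrm{D}}(\E)}$ is clear since $\mC_s^{\mathrm{D}}(\E)$ is open and contained in $\mC_{ss}^{\mathrm{D}}(\E)$. Conversely, suppose $[\Theta_0] \in \mC_{ss}^{\mathrm{D}}(\E)$ is not $[\Theta]$-stable, so $l_{\mathcal{S}}([\Theta_0]) = 0$ for some $\mathcal{S}$; since $l_{\mathcal{S}}$ is a nonzero linear form, every neighbourhood of $[\Theta_0]$ contains a point with $l_{\mathcal{S}} < 0$, hence a point not in $\mC_{ss}^{\mathrm{D}}(\E)$, so $[\Theta_0] \notin \inter{\mC_{ss}^{\mathrm{D}}(\E)}$. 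The non-projected statement $\inter{\mC_{ss}(\E)} = \mC_s(\E)$ follows because the projection to $H^{n-1,n-1}(X,\R)$ is an open continuous map on the space of closed forms and the conditions only depend on the class.

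For the fourth bullet, assume $\mC_s^{\mathrm{D}}(\E) \neq \emptyset$, pick $[\Theta'] \in \mC_s^{\mathrm{D}}(\E)$, and let $[\Theta] \in \mathcal{B}_X$ with $\E$ $[\Theta]$-semi-stable; restricting to a compact $K$ containing both classes and the segment between them, for each $k$ we have $l_{\mathcal{S}_k}([\Theta']) > 0$ and $l_{\mathcal{S}_k}([\Theta]) \geq 0$, so by linearity $l_{\mathcal{S}_k}(t[\Theta'] + (1-t)[\Theta]) > 0$ for all $t \in (0,1]$; hence the segment lies in $\mC_s^{\mathrm{D}}(\E)$ except possibly at the endpoint $[\Theta]$, proving $[\Theta] \in \overline{\mC_s^{\mathrm{D}}(\E)}$. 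The reverse inclusion $\overline{\mC_s^{\mathrm{D}}(\E)} \cap \mathcal{B}_X \subset \mC_{ss}^{\mathrm{D}}(\E)$ follows from the fourth bullet's weak inequalities being closed conditions, i.e. from $\mC_{ss}^{\mathrm{D}}(\E)$ being closed in $\mathcal{B}_X$. The fifth bullet is just a restatement of Corollary \ref{COR:Finitude locale} localised at $[\Theta_0]$: for $[\Theta]$ near $[\Theta_0]$, continuity of the finitely many $l_{\mathcal{S}_k}$ ensures that those with $l_{\mathcal{S}_k}([\Theta_0]) > 0$ remain positive, so only the ones with $l_{\mathcal{S}_k}([\Theta_0]) = 0$ impose constraints; that every $\mathcal{S}$ with $l_{\mathcal{S}}([\Theta_0]) = 0$ is captured (up to equivalence of linear forms) by one of the finitely many $\mathcal{S}_k$ is precisely the content of the corollary applied to a small compact neighbourhood of $[\Theta_0]$.

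The only genuinely delicate point is the fourth bullet's claim that $\mC_{ss}^{\mathrm{D}}(\E)$ equals the closure of $\mC_s^{\mathrm{D}}(\E)$ inside $\mathcal{B}_X$ rather than something larger — this is where the hypothesis $\mC_s^{\mathrm{D}}(\E) \neq \emptyset$ is essential, since otherwise the closure is empty while $\mC_{ss}^{\mathrm{D}}(\E)$ need not be; the segment argument above is the heart of it, and one must be slightly careful that a single compact $K$ can be chosen to contain both the fixed stable class and an arbitrary semi-stable class together with their joining segment, which is fine since any such segment is compact. Everything else is bookkeeping with finitely many linear inequalities and the openness of the projection map.
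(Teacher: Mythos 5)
Your proof is correct and follows essentially the same route as the paper: Corollary \ref{COR:Finitude locale} supplies the local polyhedral description on each compact $K \subset \mathcal{B}_X$, and the remaining bullets are finite-dimensional convex geometry (the segment argument for the closure statement, perturbation of a vanishing linear form for the interior statement) transported to the infinite-dimensional cones by continuity and openness of the projection onto $H^{n - 1,n - 1}(X,\R)$. The one step you leave implicit is the assertion in the third bullet that ``every $l_{\mathcal{S}}$ is a nonzero linear form'': passing from $\frac{c_1(\mathcal{S})}{\rk(\mathcal{S})} \neq \frac{c_1(\E)}{\rk(\E)}$ to $l_{\mathcal{S}} \neq 0$ requires the non-degeneracy of the Serre pairing $H^{n - 1,n - 1}(X,\R) \times H^{1,1}(X,\R) \rightarrow \R$, which the paper invokes explicitly and which you should record.
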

\begin{proof}
By Corollary \ref{COR:Finitude locale}, if $K \subset \mathcal{B}_X$ is compact, there is an integer $p$ and a $p$-tuple $\mathcal{S}_1,\ldots,\mathcal{S}_p \subsetneq \E$ of coherent sheaves such that for all $k$, $0 < \rk(\mathcal{S}_k) < \rk(\E)$ and,
$$
\mC_s^{\mathrm{D}}(\E) \cap K = \bigcap_{k = 1}^p \{l_{\mathcal{S}_k} > 0\} \cap K, \qquad \mC_{ss}^{\mathrm{D}}(\E) \cap K = \bigcap_{k = 1}^p \{l_{\mathcal{S}_k} \geq 0\} \cap K.
$$
It gives then local polyhedral structures.

Moreover, since the pairing $H^{n - 1,n - 1}(X,\R) \times H^{1,1}(X,\R) \rightarrow \R$ is non-degenerate by Serre duality, for each $\mathcal{S}$,
$$
l_{\mathcal{S}} = 0 \Leftrightarrow \frac{c_1(\mathcal{S})}{\rk(\mathcal{S})} = \frac{c_1(\E)}{\rk(\E)}.
$$
From here, the results about the finite dimensional cones follow from basic topology and convex geometry, and the results about the infinite dimensional cones follow from the continuity of the projection onto $H^{n - 1,n - 1}(X,\R)$.
\end{proof}

\subsection{The sufficiently smooth semi-stable cone}

We end this section by studying the set of all $[\Theta] \in \mC_{ss}(\E)$ such that $\E$ is sufficiently smooth. In particular, we obtain the openness of this property.

\begin{lemma}\label{LEM:Gr(Gr)}
    Let $\E$ be a torsion-free coherent sheaf and $K \subset \mathcal{B}_X$ compact and convex. By Theorem \ref{THE:Structures cônes}, $C = \mC_{ss}(\E) \cap K$ is a convex polyhedral cone. Thus for all $[\Theta] \in C$, there is a unique face $F_{[\Theta]} \subset C$ such that $[\Theta]$ belongs to the relative interior of $F_{[\Theta]}$. If $[\Theta_1]$ and $[\Theta_2]$ are in $C$ and $F_{[\Theta_1]} \subset F_{[\Theta_2]}$, then $\Gr_{[\Theta_2]}(\E)$ is $[\Theta_1]$-semi-stable and,
    $$
    \Gr_{[\Theta_1]}(\Gr_{[\Theta_2]}(\E)) \cong \Gr_{[\Theta_1]}(\E).
    $$
    In particular, if $F_{[\Theta_1]} = F_{[\Theta_2]}$, $\Gr_{[\Theta_1]}(\E) \cong \Gr_{[\Theta_2]}(\E)$.
\end{lemma}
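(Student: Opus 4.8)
The plan is to work with a Jordan–Hölder filtration of $\E$ with respect to $[\Theta_2]$ and show it can be refined to one adapted to $[\Theta_1]$, then identify the graded pieces. First I would recall the structure of a face of the polyhedral cone $C = \mC_{ss}(\E) \cap K$: by Corollary \ref{COR:Finitude locale} there is a finite family $\mathcal{S}_1,\ldots,\mathcal{S}_p$ of subsheaves of $\E$ such that $C = \mathcal{B}_X \cap K \cap \bigcap_k \{l_{\mathcal{S}_k} \geq 0\}$, and each face of $C$ is cut out by turning some subset of the inequalities $l_{\mathcal{S}_k} \geq 0$ into equalities. The key observation is that $[\Theta]$ lies in the relative interior of $F_{[\Theta]}$ precisely when $F_{[\Theta]} = C \cap \bigcap \{l_{\mathcal{S}} = 0 : l_{\mathcal{S}}([\Theta]) = 0\}$; hence $F_{[\Theta_1]} \subset F_{[\Theta_2]}$ translates into the statement that every subsheaf $\mathcal{S} \subset \E$ with $l_{\mathcal{S}}([\Theta_2]) = 0$ also satisfies $l_{\mathcal{S}}([\Theta_1]) = 0$; equivalently, every subsheaf that is "$[\Theta_2]$-slope-critical" (same $[\Theta_2]$-slope as $\E$) is also "$[\Theta_1]$-slope-critical". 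In particular the terms $\F_j$ of a $[\Theta_2]$-Jordan–Hölder filtration of $\E$, which all satisfy $\mu_{[\Theta_2]}(\F_j) = \mu_{[\Theta_2]}(\E)$, also satisfy $\mu_{[\Theta_1]}(\F_j) = \mu_{[\Theta_1]}(\E)$.

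Next I would fix a $[\Theta_2]$-Jordan–Hölder filtration $0 = \F_0 \subsetneq \F_1 \subsetneq \cdots \subsetneq \F_m = \E$ with $\G_j = \F_j / \F_{j-1}$ torsion-free and $[\Theta_2]$-stable, so $\Gr_{[\Theta_2]}(\E) = \bigoplus_j \G_j$. Each $\G_j$ has $\mu_{[\Theta_2]}(\G_j) = \mu_{[\Theta_2]}(\E)$, and by the slope-critical observation applied to the relevant subsheaves (and using the see-saw property on the short exact sequences $0 \to \F_{j-1} \to \F_j \to \G_j \to 0$, together with the fact that $\mu_{[\Theta_1]}(\F_{j-1}) = \mu_{[\Theta_1]}(\F_j) = \mu_{[\Theta_1]}(\E)$) one gets $\mu_{[\Theta_1]}(\G_j) = \mu_{[\Theta_1]}(\E)$ for every $j$. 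I would then argue that each $\G_j$ is $[\Theta_1]$-semi-stable: a subsheaf $\mathcal{T} \subset \G_j$ with $\mu_{[\Theta_1]}(\mathcal{T}) > \mu_{[\Theta_1]}(\G_j) = \mu_{[\Theta_1]}(\E)$ pulls back to a subsheaf $\mathcal{T}' \subset \F_j \subset \E$ sitting in $0 \to \F_{j-1} \to \mathcal{T}' \to \mathcal{T} \to 0$, and since $\mu_{[\Theta_1]}(\F_{j-1}) = \mu_{[\Theta_1]}(\E)$, the see-saw property forces $\mu_{[\Theta_1]}(\mathcal{T}') > \mu_{[\Theta_1]}(\E)$, contradicting $[\Theta_1]$-semi-stability of $\E$. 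Consequently $\Gr_{[\Theta_2]}(\E) = \bigoplus_j \G_j$ is a direct sum of $[\Theta_1]$-semi-stable sheaves of common $[\Theta_1]$-slope $\mu_{[\Theta_1]}(\E)$, hence itself $[\Theta_1]$-semi-stable, with the same $[\Theta_1]$-slope as $\E$.

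It then remains to identify $\Gr_{[\Theta_1]}(\Gr_{[\Theta_2]}(\E))$ with $\Gr_{[\Theta_1]}(\E)$. For this I would take, for each $j$, a $[\Theta_1]$-Jordan–Hölder filtration of $\G_j$; these exist by Proposition \ref{PRO:FJH} since $\G_j$ is $[\Theta_1]$-semi-stable, and their graded pieces assemble to $\Gr_{[\Theta_1]}(\bigoplus_j \G_j) = \bigoplus_j \Gr_{[\Theta_1]}(\G_j)$. On the other hand, splicing these filtrations of the $\G_j$ together with the filtration $(\F_j)$ produces a filtration of $\E$ all of whose successive quotients are $[\Theta_1]$-stable of $[\Theta_1]$-slope $\mu_{[\Theta_1]}(\E)$ (using again that each step $\F_{j-1} \subset \mathcal{F} \subset \F_j$ with $\mathcal{F}/\F_{j-1}$ a JH-term of $\G_j$ has $\mu_{[\Theta_1]}(\mathcal{F}) = \mu_{[\Theta_1]}(\E)$); hence this is a $[\Theta_1]$-Jordan–Hölder filtration of $\E$ whose associated graded is $\bigoplus_j \Gr_{[\Theta_1]}(\G_j)$. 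By the uniqueness clause of Proposition \ref{PRO:FJH} (up to isomorphism away from a codimension $2$ analytic subset), this coincides with $\Gr_{[\Theta_1]}(\E)$, giving $\Gr_{[\Theta_1]}(\Gr_{[\Theta_2]}(\E)) \cong \Gr_{[\Theta_1]}(\E)$. The case $F_{[\Theta_1]} = F_{[\Theta_2]}$ then follows by symmetry (each of $\Gr_{[\Theta_i]}(\E)$ is $[\Theta_{3-i}]$-polystable and their graded objects agree, so they are isomorphic) together with the remark that a $[\Theta]$-semi-stable sheaf is $[\Theta]$-polystable iff it equals its own graded object. The main obstacle I anticipate is the bookkeeping around reflexivity and the "up to codimension $2$" ambiguity in Proposition \ref{PRO:FJH}: strictly speaking the graded objects are only well-defined up to isomorphism in codimension $2$ (or after reflexive closure), so the argument that splicing filtrations yields a genuine Jordan–Hölder filtration of $\E$, and that the two gradeds match, has to be carried out at the level of reflexive closures or on the open locus where all the sheaves involved are locally free — this is where care is needed, though no deep idea beyond the see-saw property and the uniqueness statement is required.
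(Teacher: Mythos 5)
Your proposal is correct and follows essentially the same route as the paper: transfer the slope-criticality of the terms of a $[\Theta_2]$-Jordan--Hölder filtration to $[\Theta_1]$ via the face containment, deduce $[\Theta_1]$-semi-stability of the graded pieces by the see-saw property, and splice/refine to a $[\Theta_1]$-Jordan--Hölder filtration of $\E$ whose graded object is identified on both sides through the uniqueness clause of Proposition \ref{PRO:FJH}. The only (valid) cosmetic difference is that you obtain $l_{\mathcal{S}}([\Theta_1]) = 0$ from the general convexity fact that a linear form nonnegative on $C$ and vanishing at a relative-interior point of $F_{[\Theta_2]}$ vanishes on all of $F_{[\Theta_2]}$, whereas the paper routes this step through the finite defining family of Corollary \ref{COR:Finitude locale}.
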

\begin{proof}
Let $\mathcal{S}_1,\ldots,\mathcal{S}_p \subset \E$ be a finite family of coherent sheaves given by Corollary \ref{COR:Finitude locale} such that $C = \bigcap_{k = 1}^p \{l_{\mathcal{S}_k} \geq 0\} \cap K$. We may assume that the $l_{\mathcal{S}_k}$ are pairwise distinct. Up to rearranging the $\mathcal{S}_k$, we may also assume that the faces $F_{[\Theta_2]}$ and $F_{[\Theta_1]} \subset F_{[\Theta_2]}$ are defined by the equations,
$$
F_{[\Theta_1]} = \bigcap_{k = 1}^j \{l_{\mathcal{S}_k} = 0\} \cap \bigcap_{k = j + 1}^p \{l_{\mathcal{S}_k} \geq 0\} \cap K, \qquad F_{[\Theta_2]} = \bigcap_{k = 1}^i \{l_{\mathcal{S}_k} = 0\} \cap \bigcap_{k = i + 1}^p \{l_{\mathcal{S}_k} \geq 0\} \cap K,
$$
for some $0 \leq i \leq j \leq p$. Moreover, since $[\Theta_1]$ (resp. $[\Theta_2]$) belongs to the relative interior of $F_{[\Theta_1]}$ (resp. $F_{[\Theta_2]}$), we have $l_{\mathcal{S}_k}([\Theta_1]) > 0$ when $k > j$ (resp. $l_{\mathcal{S}_k}([\Theta_2]) > 0$ when $k > i$).

Let,
$$
0 \subsetneq \F_1 \subsetneq \cdots \subsetneq \F_m = \E,
$$
be a Jordan-Hölder filtration for $\E$ with respect to $[\Theta_2]$. We have,
\begin{equation}\label{EQ:Expression gradué}
    \Gr_{[\Theta_2]}(\E) = \bigoplus_{k = 1}^m \G_k,
\end{equation}
where $\G_k = \F_k/\F_{k - 1}$. By definition, for all $k$, $l_{\F_k}([\Theta_2]) = 0$. Since the $\F_k$ destabilise $\E$ with respect to $[\Theta_2] \in K$, there must exist an integer $1 \leq q \leq p$ such that $\frac{c_1(\F_k)}{\rk(\F_k)} = \frac{c_1(\mathcal{S}_q)}{\rk(\mathcal{S}_q)}$. The equality $l_{\F_k}([\Theta_2]) = l_{\mathcal{S}_q}([\Theta_2]) = 0$ implies that $q \leq i$. In particular, $q \leq j$ so for all $k$, $l_{\F_k}([\Theta_1]) = 0$. It implies that each $\F_k$ is $[\Theta_1]$-semi-stable by $[\Theta_1]$-semi-stability of $\E$. Therefore, for all $k$, $l_{\G_k}([\Theta_1]) = 0$ and $\G_k$ is $[\Theta_1]$-semi-stable as a quotient of $[\Theta_1]$-semi-stable sheaves of the same $[\Theta_1]$-slope.

It means that the $\F_k$ form a filtration of $\E$ by semi-stable sub-sheaves with torsion-free semi-stable quotients with respect to $[\Theta_1]$. Let us consider a maximal refinement of this filtration with for all $1 \leq k \leq m$,
$$
\F_{k - 1} = \F_{k - 1,0} \subsetneq \F_{k - 1,1} \subsetneq \cdots \subsetneq \F_{k - 1,p_k} = \F_k = \F_{k,0}.
$$
Then, the $\F_{k,q}$ form a Jordan-Hölder filtration of $\E$ with respect to $[\Theta_1]$ thus on the one hand,
$$
\Gr_{[\Theta_1]}(\E) = \bigoplus_{k = 1}^m\bigoplus_{q = 1}^{p_k} \F_{k - 1,q}/\F_{k - 1,q - 1},
$$
and on the other hand, for each $k$, the $\F_{k - 1,q}/\F_{k - 1}$ form a Jordan-Hölder filtration of $\G_k = \F_k/\F_{k - 1}$ with respect to $[\Theta_1]$ so, by (\ref{EQ:Expression gradué}),
$$
\Gr_{[\Theta_1]}(\Gr_{[\Theta_2]}(\E)) = \bigoplus_{k = 1}^m \Gr_{[\Theta_1]}(\G_k) = \bigoplus_{k = 1}^m\bigoplus_{q = 1}^{p_k} \F_{k - 1,q}/\F_{k - 1,q - 1}.
$$
It proves the lemma.
\end{proof}
Let $\underline{\mC}_{ss}(\E) \subset \mC_{ss}(\E)$ be the set of all $\Theta$ with respect to which $\E$ is semi-stable and sufficiently smooth. Let $\underline{\mC}_{ss}^{\mathrm{D}}(\E) \subset \mC_{ss}^{\mathrm{D}}(\E)$ be its projection on the real Dolbeault cohomology group. Lemma \ref{LEM:Gr(Gr)} has the following consequence,
\begin{corollary}
    $\underline{\mC}_{ss}^{\mathrm{D}}(\E)$ is an open subset of $\mC_{ss}^{\mathrm{D}}(\E)$ whose complement is locally formed of a union of closed faces of $\mC_{ss}^{\mathrm{D}}(\E)$. Similarly, $\underline{\mC}_{ss}(\E)$ is an open subset of $\mC_{ss}(\E)$.
\end{corollary}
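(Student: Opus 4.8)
The plan is to reduce the statement to the polyhedral picture of Lemma \ref{LEM:Gr(Gr)} together with one elementary fact about coherent sheaves. First I would dispose of the trivial case: if $\E$ is not locally free then ``sufficiently smooth'' never holds, so $\underline{\mC}_{ss}(\E)$ and $\underline{\mC}_{ss}^{\mathrm D}(\E)$ are empty and there is nothing to prove; hence assume $\E$ is a vector bundle. Next I would note that whether $\E$ is sufficiently smooth with respect to $\Theta$ depends only on the class $[\Theta]$ (since $\Gr_{[\Theta]}(\E)$ does), so that $\underline{\mC}_{ss}(\E)$ is the preimage of $\underline{\mC}_{ss}^{\mathrm D}(\E)$ under the restriction to $\mC_{ss}(\E)$ of the continuous projection $\Theta \mapsto [\Theta]$. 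This reduces everything to the two assertions about $\underline{\mC}_{ss}^{\mathrm D}(\E)$.

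Then I would fix $[\Theta_0]$ and a compact convex neighbourhood $K$ of it in $\mathcal B_X$, and work inside the convex polyhedral cone $C = \mC_{ss}^{\mathrm D}(\E) \cap K$ from Lemma \ref{LEM:Gr(Gr)}. The last sentence of that lemma gives that the isomorphism class of $\Gr_{[\Theta]}(\E)$ is constant as $[\Theta]$ ranges over the relative interior of a given face $F$ of $C$; I would call $F$ \emph{smooth} when this common graded sheaf is locally free. Since every class of $C$ lies in the relative interior of a unique face, $\underline{\mC}_{ss}^{\mathrm D}(\E) \cap C$ is exactly the disjoint union of the relative interiors of the smooth faces.

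The heart of the argument is a monotonicity statement: if $F'' \subseteq F$ are faces of $C$ and $F''$ is smooth, then $F$ is smooth. To prove it I would take $[\Theta''] \in \mathrm{relint}(F'')$ and $[\Theta] \in \mathrm{relint}(F)$, observe that $F_{[\Theta'']} \subseteq F_{[\Theta]}$, and apply Lemma \ref{LEM:Gr(Gr)} to get that $\Gr_{[\Theta]}(\E)$ is $[\Theta'']$-semi-stable and that $\Gr_{[\Theta'']}\!\big(\Gr_{[\Theta]}(\E)\big) \cong \Gr_{[\Theta'']}(\E)$, which is locally free because $F''$ is smooth. What then remains is the following sub-lemma, which I regard as the main (if still elementary) obstacle: a $[\Theta'']$-semi-stable torsion-free coherent sheaf $\mathcal G$ with $\Gr_{[\Theta'']}(\mathcal G)$ locally free is itself locally free. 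I would prove it by choosing a Jordan--Hölder filtration $0 \subsetneq \mathcal H_1 \subsetneq \cdots \subsetneq \mathcal H_r = \mathcal G$ with stable torsion-free quotients $\mathcal Q_k = \mathcal H_k/\mathcal H_{k-1}$: since $\bigoplus_k \mathcal Q_k = \Gr_{[\Theta'']}(\mathcal G)$ is locally free, each $\mathcal Q_k$ is a direct summand of a locally free sheaf, hence locally free, and then the exact sequences $0 \to \mathcal H_{k-1} \to \mathcal H_k \to \mathcal Q_k \to 0$ have locally free outer terms, forcing $\mathcal H_k$ — and in particular $\mathcal G = \mathcal H_r$ — to be locally free, an extension of locally free sheaves being again locally free. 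Two subtleties I would flag: the implication runs one way only (``graded object locally free $\Rightarrow$ sheaf locally free'' holds, the converse fails), so one must invoke Lemma \ref{LEM:Gr(Gr)} in the direction $\Gr_{[\Theta'']}(\Gr_{[\Theta]}(\E)) \cong \Gr_{[\Theta'']}(\E)$ and not the other; and one needs the honest sheaf isomorphism supplied by the proof of Lemma \ref{LEM:Gr(Gr)}, not merely agreement in codimension one, since local freeness is not detected there.

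Finally, granting monotonicity, the non-smooth faces form a down-set in the face poset of $C$, so their union equals the disjoint union of their relative interiors, which is precisely $C \setminus \underline{\mC}_{ss}^{\mathrm D}(\E)$; being a finite union of closed faces it is closed in $C$. Intersecting with the interior of $K$, a neighbourhood of $[\Theta_0]$ in $\mC_{ss}^{\mathrm D}(\E)$, then yields at once that $\underline{\mC}_{ss}^{\mathrm D}(\E)$ is open in $\mC_{ss}^{\mathrm D}(\E)$ and that its complement is, near each point, a union of closed faces of the local polyhedral model $C$. The assertion for $\underline{\mC}_{ss}(\E)$ follows from the reduction in the first paragraph.
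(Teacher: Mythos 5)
Your proof is correct and takes essentially the same route as the paper's, which simply cites Lemma \ref{LEM:Gr(Gr)} together with the fact that a sheaf whose graded object is locally free is itself locally free (stated there in contrapositive form and left unproved). Your write-up fills in exactly the details the paper omits: the direct-summand/extension argument for that fact, and the observation that monotonicity makes the non-smooth faces a down-set in the face poset, hence a closed union of faces.
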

\begin{proof}
This is a direct consequence of Lemma \ref{LEM:Gr(Gr)} and the fact that the graded object of a sheaf which is not locally free is itself not locally free (in particular, if $\E$ is not locally free, these cones are empty).
\end{proof}

\section{Local analytic results}\label{SEC:Résulats analytiques}

Let $\Theta_0$ be a balanced metric on $X$ and $\E = (E,\dbar_E)$ a $[\Theta_0]$-semi-stable sufficiently smooth vector bundle. Assume that $\mC_s(\E) \neq \emptyset$ so $\Theta_0 \in \overline{\mC_s(\E)}$ by Theorem \ref{THE:Structures cônes}, and $\E$ is simple. We associate to $\E$ a Jordan-Hölder filtration $0 \subsetneq \F_1 \subsetneq \cdots \subsetneq \F_m = \E$ given by Proposition \ref{PRO:FJH} and we set
$$
\Gr(\E) = \Gr_{[\Theta_0]}(\E) = \bigoplus_{k = 1}^m \G_k,
$$
to be its graded object where the $\G_k = \F_k/\F_{k - 1}$ are its stable components. By Proposition \ref{PRO:Kobayashi--Hitchin}, we may apply a gauge transform to $\dbar_0$ so $(\Gr(\E),h) = (E,\dbar_0,h)$ is $\Theta_0$-HYM. If we apply the same gauge transform to $\dbar_E$, $\dbar_0$ still arises from a Jordan--Hölder filtration of $\E$. Let $\nabla_0 = \partial_0 + \dbar_0$ the associated Chern connection and $F_{\dbar_0} = \nabla_0 \circ \nabla_0$ its curvature form. It satisfies,
\begin{equation}\label{EQ:dbar_0 HYM}
    \Lambda_\Theta\i F_{\dbar_0} = c_{\Theta_0}\Id_E,
\end{equation}
where,
$$
c_{\Theta_0} = \frac{2\pi c_1(E) \cup [\Theta_0]}{\rk(E)\Vol_{\Theta_0}(X)}.
$$
Let for all $k$, $F_k$ be the smooth structure of $\F_k$ and $G_k = F_k/F_{k - 1}$ be the smooth structure of $\G_k$. $h$ induces a natural isometry $G_k = F_k \cap F_{k - 1}^\bot$ so,
$$
E = \bigoplus_{k = 1}^m G_k,
$$
and this decomposition is orthogonal. In particular, the induced decomposition
$$
\Omega^{0,1}(X,\End(E)) = \bigoplus_{1 \leq i,j \leq m} \Omega^{0,1}(X,\Hom(G_j,G_i)),
$$
is orthogonal too for the $L^2$ scalar product. In this decomposition, we can write,
$$
\dbar_E - \dbar_0 = \sum_{1 \leq i,j \leq m} \gamma_{ij}, \qquad \gamma_{ij} \in \Omega^{0,1}(X,\Hom(G_j,G_i)).
$$
For all $1 \leq k \leq m$, the smooth embeddings $F_k \subset E$ are holomorphic when $E$ is endowed with $\dbar_0$ or with $\dbar_E$. It means that for all smooth section $s$ of $F_k$, $\dbar_0s$ and $\dbar_Es$ belong to,
$$
\Omega^{0,1}(\End(F_k)) = \bigoplus_{1 \leq i,j \leq k} \Omega^{0,1}(\Hom(G_j,G_i)).
$$
In particular,
$$
\dbar_Es - \dbar_0s = \sum_{1 \leq i,j \leq m} \gamma_{ij}s = \sum_{i = 1}^m\sum_{j = 1}^k \gamma_{ij}s.
$$
Since this form is always in $\bigoplus_{1 \leq i,j \leq k} \Omega^{0,1}(\Hom(G_j,G_i))$, we deduce that $\gamma_{ij} = 0$ when $1 \leq j \leq k$ and $k + 1 \leq i \leq m$. It is true for all $k$ so $(\gamma_{ij})$ is upper triangular. Moreover, by construction of the quotients $\G_k$,
$$
(G_k,\dbar_{0|G_k}) = \G_k = (G_k,\dbar_{E|G_k}) = (G_k,\dbar_{0|G_k} + \gamma_{k,k}).
$$
We deduce that $\gamma_{kk} = 0$ hence, $\gamma$ is strictly upper triangular \cite[Sub-section 3.2]{Clarke_Tipler} \cite[Sub-section 4.3.1]{DMS}.

Being a balanced metric is an open condition in the space of closed $(n - 1,n - 1)$ forms endowed with the $\mC^0$ topology. During all this article, $U$ will be an $\mC^0$ open neighbourhood of $0$ such that for all $\varepsilon \in U$, $\Theta_\varepsilon = \Theta_0 + \varepsilon$ is positive. With the $\mC^0$ topology, all the natural operations involving $\Theta$ like $\Theta \mapsto \Vol_\Theta$ or $\Theta \mapsto \Lambda_\Theta\alpha$ for some $(1,1)$-form $\alpha$, are smooth. In particular, up to shrinking $U$, the $L^2$ norms $\norme{\cdot}_{\Theta_\varepsilon} = \norme{\cdot}_\varepsilon$ are uniformly equivalent. From now on, when we consider objects that depend on the balanced metric, we replace the subscript $\Theta = \Theta_\varepsilon$ by $\varepsilon$ for simplicity.

\subsection{Perturbed Kuranishi slice and moment map}

We call,
$$
G = \Aut_0(\Gr(\E)) \subset \G^\C,
$$
the group of automorphisms of $\Gr(\E)$ of determinant $1$ and,
$$
K = \Aut_0(\Gr(\E),h) = \G \cap G,
$$
the group of unitary automorphisms of $\Gr(\E)$ of determinant $1$. They are both finite dimensional, $K$ is compact and $G = K^\C$, thus $G$ is a reductive Lie group with maximal compact subgroup $K$. Let,
$$
\frak{g} = \Lie(G) \subset \Lie(\G^\C), \qquad \frak{k} = \Lie(K) = \Lie(\G) \cap \frak{g},
$$
their Lie algebras. Notice that the condition of having a determinant that equals $1$ on $G$ and $K$ implies that all elements of $\frak{g}$ and $\frak{k}$ have a trace that equals $0$. Each of these Lie algebras is identified with its dual space thanks to the scalar products $\scal{\cdot}{\cdot}_\varepsilon$ on,
$$
\Lie(\G^\C) = \Omega^0(X,\End(E)).
$$
We must be careful that this identification depends on $\varepsilon$. Notice also that the automorphisms group and the unitary automorphisms group of $\Gr(\E)$ lie in exact sequences,
$$
\begin{tikzcd}
    1 \ar{r} & G \ar{r} & \Aut(\Gr(\E)) \ar["\det"]{r} & \C^* \ar{r} & 1
\end{tikzcd}
$$
$$
\begin{tikzcd}
    1 \ar{r} & K \ar{r} & \Aut(\Gr(\E),h) \ar["\det"]{r} & \S^1 \ar{r} & 1
\end{tikzcd}
$$
Therefore, their Lie algebras are respectively $\frak{g} \oplus \C\Id_E$ and $\frak{k} \oplus \i\R\Id_E$. The reason why we restrict ourselves to the automorphisms that have a determinant equal to $1$ is that $\C^*\Id_E \subset \Aut(\Gr(\E))$ acts trivially on connections.

Define,
$$
\Omega_\varepsilon^D : (\alpha,\beta) \mapsto \scal{J\alpha}{\beta}_\varepsilon,
$$
to be the symplectic form associated to the Hermitian product introduced at (\ref{EQ:Produit hermitien}) with respect to $\Theta_\varepsilon$ on $\End(E)$. We will mostly be interested in the case where $\alpha$ and $\beta$ are $(0,1)$-forms. In this case,
$$
\Omega_\varepsilon^D(\alpha,\beta) = \int_X \tr(\alpha \wedge \beta^*) \wedge \Theta_\varepsilon.
$$
The set of (not necessarily integrable) Dolbeault operators is an affine space with associated vector space $\Omega^{0,1}(X,\End(E))$. Thus it is in particular an infinite dimensional manifold and $\Omega_\varepsilon^D$ is formally a Kähler form on it. Moreover, by \cite{Atiyah_Bott,Donaldson}, the unitary gauge group has a Hamiltonian action on this manifold and the associated equivariant moment map is,
$$
\nu_{\infty,\varepsilon} : \dbar \mapsto \Lambda_\varepsilon F_{\dbar} + \i c_\varepsilon\Id_E.
$$
Concretely, it means that for all $\dbar$, for all $v$ tangent to $\dbar$ and all $a \in \Lie(\G^\C) = \Omega^0(X,\End(E))$,
\begin{equation}\label{EQ:Application moment dimension infinie}
    \scal{d\nu_{\infty,\varepsilon}(\dbar)v}{a}_\varepsilon = \Omega_\varepsilon^D(L_{\dbar}a,v),
\end{equation}
where $\dbar \mapsto L_{\dbar}a = \frac{\partial}{\partial t}|_{t = 0}\e^{ta} \cdot \dbar$ is the vector field induced by the infinitesimal action of $a$. We use here the same notation as in \cite{GRS}. We want now to reduce ourselves to the same kind of moment map, but for finite dimensional Lie groups. For this, we use first the same method as in \cite{DMS,Clarke_Tipler,Gabor} involving the Kuranishi slice.

Let $V$ be the finite dimensional complex space $H^{0,1}(X,\End(\Gr(\E)),\Theta_0)$ of harmonic $(0,1)$-forms with values in $\End(\Gr(\E))$ with respect to $\Theta_0$. Let,
$$
\Phi : B \rightarrow \Omega^{0,1}(X,\End(\Gr(\E))),
$$
be the Kuranishi slice \cite{Kuranishi} where $B \subset V$ is a ball around $0$. It is given as the local inverse around $0$ of
$$
\alpha \mapsto \alpha + \dbar_0^*\mathrm{Green}(\alpha \wedge \alpha),
$$
where $\mathrm{Green}$ is the Green operator associated to $\Delta_0 = \Delta_{\Theta_0,\nabla_0}$. In particular, it is a holomorphic embedding when $B$ and $\Omega^{0,1}(X,\End(E))$ are endowed with their usual complex structures. When $b \in B$, we set the Dolbeault operator,
$$
\dbar_b = \dbar_0 + \Phi(b).
$$
Recall that $\Phi$ verifies the following properties,
\begin{enumerate}
    \item $\Phi(0) = 0$ and $d\Phi(0) : v \mapsto v$ for $v \in T_0V = V \subset \Omega^{0,1}(X,\End(E))$.
    \item $B$ is $K$-invariant and $\Phi$ is $G$-equivariant where it is defined in the sense that for all $b \in B$ and all $g \in G$ such that $g \cdot b \in B$, $\dbar_{g \cdot b} = g \cdot \dbar_b$.
    \item For any small enough deformation $\dbar$ of $\dbar_0$, there is a gauge transformation $f \in \G^\C$ such that $f \cdot \dbar \in \dbar_0 + \Phi(B)$.
    \item $Z = \left\{b \in B\middle|\dbar_b^2 = 0\right\}$ is a complex subspace of $B$.
\end{enumerate}

Since $\E$ is a small holomorphic deformation of $\Gr(\E)$ (see for example \cite[Section 3]{Clarke_Tipler}), we can find some $b_0 \in B$ such that,
$$
\dbar_{b_0} = \dbar_0 + \Phi(b_0),
$$
is gauge equivalent to $\dbar_E$. From now on, we call,
$$
\mathcal{O} = G \cdot b_0 \cap B,
$$
the $G$-orbit of $b_0$ in $B$. Any $b \in \mathcal{O}$ is such that $\dbar_b$ is gauge equivalent to $\dbar_E$.

Then, when $\Theta$ is a balanced metric close to $\Theta_0$ such that $\E$ is $[\Theta]$-stable, we can consider two methods to find a HYM Dolbeault operator $\dbar$ in the gauge orbit of $\dbar_E$. The first one is to look for a point $b \in \mathcal{O}$ such that $\mu_{\infty,\varepsilon}(\dbar_b)$ is orthogonal to $\frak{k}$. It can be made using GIT because the projection of $\mu_{\infty,\varepsilon}(\dbar_b)$ onto $\frak{k}$ is a moment map on $B$ for a suitable $K$-invariant Kähler form. Then, we find a sequence of gauge transform $(f_k)$ such that $\mu_{\infty,\varepsilon}(f_k \cdot \dbar_b)$ approaches $0$ and we conclude thanks to a quantitative version of the inverse function theorem. This method is used in \cite{Sektnan_Tipler} to solve the HYM equation on a blow-up, in \cite{DMS} in the context of $Z$-critical equations and in \cite{Ortu_Sektnan} in the context of cscK equations.

The second method, which is used here, consists in perturbing first the moment map. We find a family $(\dbar_{\varepsilon,b})$ which varies smoothly with $b$ and $\varepsilon$ such that $\dbar_{\varepsilon,b}$ is in the gauge orbit of $\dbar_b$ and $\mu_{\infty,\varepsilon}(\dbar_{\varepsilon,b})$ lies in $\frak{k}$. In this case, we can see $b \mapsto \mu_{\infty,\varepsilon}(\dbar_{\varepsilon,b})$ as a moment map on $B$ with respect to a suitable $K$-invariant \textit{symplectic} form on it. When $b \in \mathcal{O}$ is a zero of this moment map, the operator $\dbar_{\varepsilon,b}$ is $\Theta_\varepsilon$-HYM and is in the gauge orbit of $\dbar_E$. This method is used in \cite{Gabor} for cscK equations, in \cite{Clarke_Tipler} to study wall-crossing phenomena for the HYM equation and in \cite{Ortu} for optimal symplectic connections. The main default of this method is that the symplectic form on $B$ with respect to which $b \mapsto \mu_{\infty,\varepsilon}(\dbar_{\varepsilon,b})$ is a moment map is not compatible with the complex structure in $B$. Therefore, we need to adapt the usual GIT methods to this particular setting.

Then, we deform $\Phi$ because we are interested in Dolbeault operators $\dbar$ such that $\nu_{\infty,\varepsilon}(\dbar)$ belongs to the Lie algebra $\frak{k}$ of $K$ as in \cite[Proposition 3.2]{Clarke_Tipler}. In the following, we build functions that depend on $\varepsilon \in U$. To understand accurately the continuity of these objects with respect to $\varepsilon$, let us introduce norms on $\Omega^{n - 1,n - 1}(X,\C)$. Let $\norme{\cdot}_A$ be a $\mC^k$ norm, a Hölder norm or a Sobolev norm that dominates the $\mC^1$ norm on the space of smooth forms with values in any smooth complex bundle $F$ on $X$. By the Sobolev injection theorems, it means that,
$$
A = \left\{
\begin{array}{lcl}
    \mC^k & \textrm{with} & k \geq 1,\\
    \mC^{k,\alpha} & \textrm{with} & k \geq 1, 0 < \alpha \leq 1,\\
    L_k^p & \textrm{with} & k - \frac{2n}{p} > 1.
\end{array}
\right.
$$
All of these norms can be computed thanks to a background metric on $X$ and a background metric on $F$. $\norme{\cdot}_A$ depends on the choice of these metrics but it is unique up to equivalence. We set,
$$
A' = \left\{
\begin{array}{lcl}
    \mC^{k - 1} & \textrm{if} & A = \mC^k,\\
    \mC^{k - 1,\alpha} & \textrm{if} & A = \mC^{k,\alpha},\\
    L_{k - 1}^p & \textrm{if} & A = L_k^p.
\end{array}
\right.\qquad
{'\!A} = \left\{
\begin{array}{lcl}
    \mC^{k + 1} & \textrm{if} & A = \mC^k,\\
    \mC^{k + 1,\alpha} & \textrm{if} & A = \mC^{k,\alpha},\\
    L_{k + 1}^p & \textrm{if} & A = L_k^p.
\end{array}
\right.
$$
Since $\norme{\cdot}_A$ dominates the $\mC^1$ norm, $\norme{\cdot}_{A'}$ dominates the $\mC^0$ norm and $\norme{\cdot}_{{'\!A}}$ dominates the $\mC^2$ norm.

\begin{lemma}\label{LEM:Applications lisses}
    The maps,
    $$
    \fonction{(\Omega^0(X,\End_H(E,h)),\norme{\cdot}_{{'\!A}}) \times B}{(\Omega^{0,1}(X,\End(E)),\norme{\cdot}_A)}{(s,b)}{\e^s \cdot \dbar_b - \dbar_0}
    $$
    $$
    \fonction{(U,\norme{\cdot}_{A'}) \times (\Omega^{0,1}(X,\End(E)),\norme{\cdot}_A)}{(\Omega^{1,1}(X,\End(E)),\norme{\cdot}_{A'})}{(\varepsilon,\alpha)}{\Lambda_\varepsilon F_{\dbar_0 + \alpha}}
    $$
    are smooth.
\end{lemma}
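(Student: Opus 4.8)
The plan is to verify smoothness of each of the two maps separately, since they are built out of manifestly smooth algebraic and differential-geometric operations.

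\textbf{First map.} For the map $(s,b) \mapsto \e^s \cdot \dbar_b - \dbar_0$, I would unwind the definitions. Using the formula $\e^s \cdot \dbar = \dbar + \e^s \dbar(\e^{-s})$ from \eqref{EQ:Action de gauge}, together with $\dbar_b = \dbar_0 + \Phi(b)$, one gets
$$
\e^s \cdot \dbar_b - \dbar_0 = \Phi(b) + \e^s\big(\dbar_0 + \mathrm{ad}_{\Phi(b)}\big)(\e^{-s}),
$$
where I expand $\dbar_b(\e^{-s})$ on $\End(E)$ using that $\dbar_b = \dbar_0 + [\Phi(b),\cdot]$ as an operator on sections of $\End(E)$. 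Now $b \mapsto \Phi(b)$ is smooth (indeed holomorphic) from the finite-dimensional ball $B$ into any of the $\norme{\cdot}_A$-completions, since $\Phi$ is the local inverse of a polynomial-plus-Green-operator map and elliptic regularity gives that it takes values in smooth forms with continuous dependence in every norm; the map $s \mapsto \e^{\pm s}$ is smooth from the Banach algebra $(\Omega^0(X,\End(E)),\norme{\cdot}_{'\!A})$ to itself because $'\!A$ dominates $\mC^2 \supset \mC^0$, so the space is a Banach algebra and the exponential series converges with smooth dependence; $\dbar_0$ is a fixed bounded operator $\Omega^0(\cdot)_{'\!A} \to \Omega^{0,1}(\cdot)_{A}$ by the very definition of $'\!A$; and multiplication $\Omega^0 \times \Omega^{0,1} \to \Omega^{0,1}$ in these norms is bounded bilinear, hence smooth, again because $'\!A \hookrightarrow A$ is a continuous inclusion into a multiplicatively closed scale (or one uses that $\mC^k$, $\mC^{k,\alpha}$, $L^p_k$ are all Banach algebras under the stated hypotheses). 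Composing these smooth maps gives the claim.

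\textbf{Second map.} For $(\varepsilon,\alpha) \mapsto \Lambda_\varepsilon F_{\dbar_0 + \alpha}$, I would first note $F_{\dbar_0 + \alpha} = F_{\dbar_0} + \nabla_0 \alpha + \alpha \wedge \alpha$ (the Chern curvature of the unitary connection with $(0,1)$-part $\dbar_0+\alpha$; the $(2,0)$ and $(1,1)$ parts are determined by $h$-unitarity and depend polynomially on $\alpha$ and $\alpha^*$, with $\alpha \mapsto \alpha^*$ smooth). Each term is a fixed smooth differential operator of order $\leq 1$ in $\alpha$ plus algebraic quadratic terms, so $\alpha \mapsto F_{\dbar_0+\alpha}$ is smooth $(\Omega^{0,1},\norme{\cdot}_A) \to (\Omega^2,\norme{\cdot}_{A'})$ — here the loss of one derivative is exactly absorbed by passing from $A$ to $A'$, and the quadratic term actually costs no derivatives. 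Then I invoke the already-established fact (stated in the excerpt, just before this lemma, and in Section \ref{SEC:Généralités}) that with the $\mC^0$ topology on $U$ all operations $\Theta \mapsto \Lambda_\Theta(\cdot)$ are smooth; concretely $\varepsilon \mapsto \Lambda_\varepsilon$ is a smooth map from $(U,\norme{\cdot}_{A'})$ — indeed even from $(U,\mC^0)$ — into bounded operators $(\Omega^2,\norme{\cdot}_{A'}) \to (\Omega^{1,1},\norme{\cdot}_{A'})$, because $\Lambda_\varepsilon$ is built algebraically and rationally from the coefficients of $\Theta_\varepsilon = \Theta_0 + \varepsilon$ (via the pointwise inverse of a positive Hermitian matrix, which is a smooth function of its entries) with no derivatives involved. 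A bilinear-evaluation-is-smooth argument for $(T,\eta) \mapsto T\eta$ then finishes it.

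\textbf{Main obstacle.} The only genuinely delicate point is bookkeeping the Banach-scale structure: one must check that each of the admissible choices of $\norme{\cdot}_A$ (the $\mC^k$, $\mC^{k,\alpha}$, and $L^p_k$ with $k - 2n/p > 1$) gives a \emph{Banach algebra}, or at least supports the bounded multiplication and composition estimates used above, and that the inclusions $'\!A \hookrightarrow A \hookrightarrow A'$ together with the mapping properties of first-order differential operators between consecutive levels of the scale are the ones claimed. This is standard (Sobolev multiplication theorems, Schauder product estimates) and is precisely why the hypotheses $k \geq 1$ and $k - 2n/p > 1$ were imposed, but it is where essentially all the real content lies; once it is in place, smoothness of both maps is just the chain rule applied to compositions of smooth maps between Banach spaces. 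I would therefore organise the proof as: (i) recall the Banach-algebra and differential-operator mapping properties of the scale; (ii) write each map as an explicit finite composition of the elementary smooth building blocks above; (iii) conclude by the chain rule.
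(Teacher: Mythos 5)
Your proof is correct and follows essentially the same route as the paper: both expand $\e^s\cdot\dbar_b-\dbar_0$ via the gauge-action formula \eqref{EQ:Action de gauge} together with $\dbar_b=\dbar_0+\Phi(b)$, expand $F_{\dbar_0+\alpha}$ via the Chern-connection formula $F_{\dbar_0}+\nabla_0(\alpha-\alpha^*)+(\alpha-\alpha^*)\wedge(\alpha-\alpha^*)$, and reduce everything to the smoothness of products in the relevant Banach scale (the paper cites \cite[Theorem 4.12]{Adams_Fournier} for exactly the multiplication estimates you isolate as the main obstacle). One minor caveat: your parenthetical claim that $\varepsilon\mapsto\Lambda_\varepsilon$ is smooth even from $(U,\norme{\cdot}_{\mC^0})$ into bounded operators on $(\Omega^{1,1}(X,\End(E)),\norme{\cdot}_{A'})$ over-reaches when $A'$ is stronger than $\mC^0$, since multiplying an $A'$-regular form by coefficients controlled only in $\mC^0$ loses regularity; the correct statement with the $A'$ norm on $U$, which you also give and which the lemma actually asserts, is what is needed.
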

\begin{proof}
For the first one, we can use (\ref{EQ:Action de gauge}) to obtain,
$$
\e^s \cdot \dbar_b - \dbar_0 = \e^s \cdot \dbar_0 + \e^s\Phi(b)\e^{-s} - \dbar_0 = \e^s\dbar_0(\e^{-s}) + \e^s\Phi(b)\e^{-s}.
$$
The result follows from the smoothness of $\Phi$ and the fact that products are smooth for the ${'\!A}$ norm. See \cite[Theorem 4.12]{Adams_Fournier} for reference. For the second map, we have,
$$
F_{\dbar_0 + \alpha} = (\nabla_0 + \alpha - \alpha^*) \circ (\nabla_0 + \alpha - \alpha^*) = F_{\dbar_0} + \nabla_0(\alpha - \alpha^*) + (\alpha - \alpha^*) \wedge (\alpha - \alpha^*).
$$
Once more, the smoothness follows from the smoothness of products for the $A$ norm.
\end{proof}

From now on, when there is no ambiguity in the norms and the asymptotic developments involved, we use the $\mathrm{o}$ and $\mathrm{O}$ notations
$$
f(x) = g(x) + \mathrm{o}(x), \qquad \left(\textrm{resp. } f(x) = g(x) + \mathrm{O}(x)\right),
$$
to mean that
$$
\norme{f(x) - g(x)} = \mathrm{o}(\norme{x}), \qquad \left(\textrm{resp. } \norme{f(x) - g(x)} + \mathrm{O}(\norme{x})\right).
$$

\begin{proposition}\label{PRO:Tranche déformée}
    Let $\Omega_0^0(X,\End_H(E,h)) \subset \Omega^0(X,\End_H(E,h))$ be the space of all smooth Hermitian sections $s$ of $\End(E)$ such that,
    $$
    \int_X \tr(s)\Vol_0 = 0.
    $$
    Up to shrinking $U$ and $B$, there exists a smooth map,
    $$
    \sigma : (U,\norme{\cdot}_{\mC^0}) \times B \rightarrow (\Omega^0_0(X,\End_H(E,h)),\norme{\cdot}_{\mC^2}),
    $$
    that verifies $\sigma(0,0) = 0$, $\frac{\partial}{\partial b}|_{b = 0}\sigma(0,b) = 0$ and if we set,
    $$
    \tilde{\Phi} : \fonction{U \times B}{\Omega^{0,1}(X,\End(E))}{(\varepsilon,b)}{\e^{\sigma(\varepsilon,b)} \cdot \dbar_b - \dbar_0},
    $$
    $$
    \dbar_{\varepsilon,b} = \e^{\sigma(\varepsilon,b)} \cdot \dbar_b = \dbar_0 + \tilde{\Phi}(\varepsilon,b), \qquad F_{\varepsilon,b} = F_{\dbar_{\varepsilon,b}},
    $$
    we have,
    \begin{enumerate}
        \item For all $b \in \mathcal{O}$, $\dbar_{\varepsilon,b}$ is gauge equivalent to $\dbar_E$.
        \item For all $\varepsilon \in U$, $\tilde{\Phi}(\varepsilon,\cdot)$ is $K$-equivariant.
        \item For all $(\varepsilon,b) \in U \times B$, $\Lambda_\varepsilon F_{\varepsilon,b} + \i c_\varepsilon\Id_E \in \frak{k}$.
        \item $\frac{\partial}{\partial b}|_{b = 0}\tilde{\Phi}(0,b) : v \mapsto v$. In particular, up to shrinking $U \times B$, each $\tilde{\Phi}(\varepsilon,\cdot)$ is a smooth embedding.
        \item In a neighbourhood $U' \times B'$ of $0$ in $(U,\norme{\cdot}_{A'}) \times B$, the map,
        $$
        \sigma : U' \times B' \rightarrow (\Omega_0^0(X,\End_H(E,h)),\norme{\cdot}_{'\!A}),
        $$
        is smooth.
    \end{enumerate}
\end{proposition}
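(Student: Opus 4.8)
The plan is to construct $\sigma$ via the implicit function theorem applied to the map that sends a Hermitian section $s$ (and parameters $\varepsilon, b$) to the component of $\Lambda_\varepsilon F_{\e^s \cdot \dbar_b} + \i c_\varepsilon \Id_E$ orthogonal to $\frak{k}$. Concretely, let $\Pi_0$ denote the $L^2$-orthogonal projection onto the orthogonal complement of $\frak{k} \oplus \i\R\Id_E$ inside $\Omega^0(X,\End_H(E,h))$ with respect to $\Theta_0$, and consider
$$
N : (\varepsilon, b, s) \mapsto \Pi_0\!\left(\Lambda_\varepsilon F_{\e^s \cdot \dbar_b} + \i c_\varepsilon \Id_E\right).
$$
By Lemma \ref{LEM:Applications lisses} (composing the two smooth maps there with the gauge action from Proposition \ref{PRO:Calcul dPsi(0)}), $N$ is smooth as a map from $(U,\norme{\cdot}_{{'\!A}}) \times B \times (\Omega^0_0(X,\End_H(E,h)),\norme{\cdot}_{{'\!A}})$ into $(\Omega^{0}(X,\End_H(E,h)),\norme{\cdot}_{A'})$, after contracting with $\Lambda_\varepsilon$. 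At the base point, $N(0,0,0) = \Pi_0(c_{\Theta_0}\Id_E - c_{\Theta_0}\Id_E) = 0$ by \eqref{EQ:dbar_0 HYM}. The partial derivative $\partial_s N(0,0,0)$ is, by Proposition \ref{PRO:Calcul dPsi(0)}, the composition $\Pi_0 \circ \Delta_{\Theta_0, \nabla_0}$ restricted to the orthogonal complement of $\frak{k} \oplus \i\R\Id_E$. Since $\E = (\Gr(\E),h)$ is $\Theta_0$-HYM, Proposition \ref{PRO:Noyau Laplacien} gives $\ker \Delta_{\Theta_0,\nabla_0} = \ker \dbar_0 = \Omega^0(X,\End(\Gr(\E)))^{\dbar_0}$; on the Hermitian part this kernel is exactly $\frak{k} \oplus \i\R\Id_E$ (the real Lie algebra of unitary automorphisms of $\Gr(\E)$ together with the scalars), so $\Delta_{\Theta_0,\nabla_0}$ is invertible on the orthogonal complement, with bounded inverse between the appropriate $A'$ and ${'\!A}$ spaces by elliptic regularity. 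Hence $\partial_s N(0,0,0)$ is an isomorphism, and the implicit function theorem in Banach spaces yields a smooth $\sigma(\varepsilon,b)$ (initially with respect to the ${'\!A}$ topology on $\varepsilon$) solving $N(\varepsilon,b,\sigma(\varepsilon,b)) = 0$, with $\sigma(0,0) = 0$.

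Once $\sigma$ is produced, items (1)–(5) follow without serious difficulty. For (1): since $\sigma(\varepsilon,b) \in \Lie(\G^\C)$, the operator $\dbar_{\varepsilon,b} = \e^{\sigma(\varepsilon,b)}\cdot\dbar_b$ is gauge equivalent to $\dbar_b$, and for $b \in \mathcal{O}$ we have $\dbar_b$ gauge equivalent to $\dbar_E$ by construction of $\mathcal{O}$; composing the two gauge transformations gives the claim. For (3): $\Lambda_\varepsilon F_{\varepsilon,b} + \i c_\varepsilon \Id_E$ has vanishing $\Pi_0$-component by the defining equation for $\sigma$, and it is automatically trace-free up to the scalar correction — more precisely, integrating the trace of $\i F_{\varepsilon,b}$ against $\Theta_\varepsilon$ and using Remark \ref{REM:Formule c} shows the $\i\R\Id_E$-component also vanishes, so the quantity lies in $\frak{k}$. (It is exactly to make this trace bookkeeping work that one restricts to $\Omega^0_0$, i.e. trace-zero $s$, and uses that $\e^s$ then has determinant a fixed function; one checks the gauge action by such $s$ does not change $c_1(E)\cup[\Theta_\varepsilon]$.) For (4): differentiating $\tilde\Phi(0,b) = \e^{\sigma(0,b)}\cdot\dbar_b - \dbar_0$ at $b = 0$, using $\Phi(0)=0$, $d\Phi(0) = \id$, and $\partial_b\sigma(0,0) = 0$ (which itself comes from differentiating the defining equation at $(0,0)$ and using that $\partial_b\big|_0 N(0,b,0) = \Pi_0\circ d\Phi(0)$ applied to $V$, harmonic forms, is zero since harmonic $\dbar_0$-closed forms pair trivially — more carefully, one shows $\partial_b\sigma(0,0)=0$ by noting the linearized equation forces it), gives $\partial_b\tilde\Phi(0,0) = \id_V$; the embedding statement is then the inverse function theorem. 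Item (2), $K$-equivariance, follows from $G$-equivariance of $\Phi$ together with uniqueness in the implicit function theorem: for $u \in K$, $u \cdot \dbar_{\varepsilon,b}$ solves the same equation (the projection $\Pi_0$ and the norms are $K$-invariant since $K$ preserves $h$ and $\Theta_0$), so $\sigma(\varepsilon, u\cdot b) = \mathrm{Ad}_u \sigma(\varepsilon, b)$ and hence $\tilde\Phi(\varepsilon, u\cdot b) = u\cdot \tilde\Phi(\varepsilon,b)$. Item (5) is a bootstrapping statement: re-running the implicit function theorem, or directly differentiating the equation $N = 0$, one upgrades the dependence on $\varepsilon$ from the ${'\!A}$ topology to the $A'$ topology at the cost of landing in the ${'\!A}$ target, because $\sigma$ appears through $\e^\sigma$ and its first derivatives in the curvature, so two derivatives are gained from the elliptic inverse.

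The main obstacle is the correct choice of function spaces so that all five assertions hold simultaneously with the \emph{same} $\sigma$. The curvature $F_{\e^s\cdot\dbar_b}$ involves $\dbar_0 s$, $\partial_0 s$ and $\nabla_0$ of these, i.e. two derivatives of $s$; contracting with $\Lambda_\varepsilon$ keeps two derivatives; so to have $N$ land in a space where $\varepsilon$ enters with its natural regularity ($A'$, dominating $\mC^0$) one wants $s \in {'\!A}$ (dominating $\mC^2$), which is why the statement carefully distinguishes the $\mC^0$/$\mC^2$ version (valid on all of $U\times B$) from the sharper $A'$/${'\!A}$ version (valid only on a smaller $U'\times B'$ where the finer implicit-function argument applies). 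The second delicate point is verifying $\partial_b\sigma(0,0)=0$: this is needed for (4) and requires that the linearization of the defining equation in the $b$-direction at the base point annihilates $V$, which holds because $V$ consists of $\dbar_0$-harmonic forms and the relevant first-variation of the contracted curvature, $\i\Lambda_0\nabla_0(\dot\alpha - \dot\alpha^*)$, has zero $\Pi_0$-component when $\dot\alpha$ is harmonic — one checks this using the Kähler identities of Lemma \ref{LEM:Identités de Kähler}. Everything else is routine smoothness of multiplication in Banach algebras (via \cite[Theorem 4.12]{Adams_Fournier}) and standard elliptic theory for $\Delta_{\Theta_0,\nabla_0}$.
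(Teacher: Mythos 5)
Your proposal is correct and follows essentially the same route as the paper: an implicit function theorem applied to the $\frak{k}^\bot$-projection of the contracted curvature, with the linearisation identified as $\Delta_{\Theta_0,\nabla_0}$ via Proposition \ref{PRO:Calcul dPsi(0)}, its kernel computed from Proposition \ref{PRO:Noyau Laplacien}, equivariance from uniqueness, item (4) from harmonicity of the image of $d\Phi(0)$ killing the $b$-linearisation, and item (5) by re-running the argument in the finer norms. The only (inessential) imprecisions are that for the coarse version you should take the $\mC^0$ norm on $U$ and the $\mC^2$ norm on $s$ directly, as the paper does, rather than starting from the ${'\!A}$ topology on $\varepsilon$ and ``upgrading'' (a finer source topology gives a weaker statement), and that the Hermitian part of the kernel of $\Delta_{\Theta_0,\nabla_0}$ is $\i\frak{k} \oplus \R\Id_E$ rather than $\frak{k} \oplus \i\R\Id_E$.
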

\begin{proof}
When $W$ is a space of functions, we denote by $A(W)$ its completion with respect to an $A$ norm.

Let $\Pi_\bot : \Omega_0^0(X,\End_H(E,h)) \rightarrow \i\frak{k}^\bot$ be the orthogonal projection on $\i\frak{k}^\bot$ with respect to $\scal{\cdot}{\cdot}_0$ and,
$$
\Psi : \fonction{U \times B \times \i\frak{k}^\bot}{\i\frak{k}^\bot}{(\varepsilon,b,s)}{\Pi_\bot(\Lambda_\varepsilon\i F_{\e^s \cdot \dbar_b} - c_\varepsilon\Id_E)}.
$$
By Lemma \ref{LEM:Applications lisses}, $\Psi$ is smooth when we endow the starting $\i\frak{k}^\bot$ with a $\mC^2$ norm, the arrival one with a $\mC^0$ norm and $U$ with a $\mC^0$ norm. And by Proposition \ref{PRO:Calcul dPsi(0)},
$$
\frac{\partial}{\partial s}_{|s = 0}\Psi(0,0,s) = \Pi_\bot\Delta_0,
$$
and $\Delta_0$ is a symmetric semi-definite positive operator and it is continuous from $\mC^2(\Omega_0^0(X,\End_H(E,h)))$ to $\mC^0(\Omega_0^0(X,\End_H(E,h)))$. According to Proposition \ref{PRO:Noyau Laplacien}, the kernel of the Laplacian $\Delta_0$ (thus its cokernel) is $\i\frak{k}$. In particular, $\Pi_\bot\Delta_0 = \Delta_0$ and,
$$
\frac{\partial}{\partial s}_{|s = 0}\Psi(0,0,s) = \Delta_0 : \mC^2(\i\frak{k}^\bot) \rightarrow \mC^0(\i\frak{k}^\bot),
$$
is an isomorphism.

By the implicit function theorem, up to shrinking $U \times B$, there is a unique smooth $\sigma : \mC^0(U) \times B \rightarrow \mC^2(\i\frak{k}^\bot)$ such that $\sigma(0,0) = 0$ and for all $(\varepsilon,b) \in \mC^0(U) \times B$, $\Psi(\varepsilon,b,\sigma(\varepsilon,b)) = 0$. When we set $\tilde{\Phi}(\varepsilon,b) = \e^{\sigma(\varepsilon,b)} \cdot \dbar_b - \dbar_0$, the first point is immediate and the third point is verified. For the second one, notice that for all $u \in K$ and all $(\varepsilon,b) \in U \times B$
\begin{align*}
    \Psi(\varepsilon,b,u^*\sigma(\varepsilon,u \cdot b)u) & = \Pi_\bot(\Lambda_\varepsilon\i F_{\e^{u^*\sigma(\varepsilon,u \cdot b)u} \cdot \dbar_b} - c_\varepsilon\Id_E)\\
    & = \Pi_\bot(\Lambda_\varepsilon\i F_{u^*\e^{\sigma(\varepsilon,u \cdot b)} \cdot \dbar_{u \cdot b}u} - c_\varepsilon\Id_E)\\
    & = \Pi_\bot(u^*(\Lambda_\varepsilon\i F_{\e^{\sigma(\varepsilon,u \cdot b)} \cdot \dbar_{u \cdot b}} - c_\varepsilon\Id_E)u) \textrm{ by (\ref{EQ:Action gauge unitaire}),}\\
    & = 0.
\end{align*}
By uniqueness of $\sigma$, it implies that $u^*\sigma(\varepsilon,u \cdot b)u = \sigma(\varepsilon,b)$. Therefore,
$$
\dbar_{\varepsilon,u \cdot b} = \dbar_0 + \tilde{\Phi}(\varepsilon,u \cdot b) = \e^{\sigma(\varepsilon,u \cdot b)} \cdot \dbar_{u \cdot b} = u\e^{\sigma(\varepsilon,b)}u^*u\dbar_bu^* = u(\dbar_0 + \tilde{\Phi}(\varepsilon,b))u^* = u\dbar_{\varepsilon,b}u^*.
$$
It proves the second point. For the fourth point, we have for all $b$,
\begin{align*}
    \Psi(0,b,0) & = \Pi_\bot(\Lambda_0\i F_{\dbar_b} - c_0\Id_E)\\
    & = \Pi_\bot(\Lambda_0\i F_{\dbar_0} + \Lambda_0\i\nabla_0(\Phi(b) - \Phi(b)^*) + \Lambda_0\i(\Phi(b) - \Phi(b)^*) \wedge (\Phi(b) - \Phi(b)^*) - c_0\Id_E)\\
    & = \Pi_\bot(\partial_0^*\Phi(b) + \dbar_0^*\Phi(b)^*) + \mathrm{o}(b) \textrm{ by (\ref{EQ:dbar_0 HYM}) and Lemma \ref{LEM:Identités de Kähler}.}
\end{align*}
Therefore, $\frac{\partial}{\partial b}_{|b = 0}\Psi(0,b,0) = \Pi_\bot(\partial_0^*d\Phi(0) + \dbar_0^*d\Phi(0)^*)$. Now, recall that the image of $d\Phi(0)$ is the space $V$ of $(0,1)$-harmonic forms. In particular, it is included in the kernel of $\partial_0^*$. Similarly, $d\Phi(0)^*$ takes values in $\ker(\dbar_0^*)$ because $\nabla_0$ is unitary. Thus $\frac{\partial}{\partial b}_{|b = 0}\Psi(0,b,0) = 0$. Therefore, when we differentiate the equality $\Psi(0,b,\sigma(0,b))$ at $b = 0$,
$$
0 = \frac{\partial}{\partial b}|_{b = 0}\Psi(0,b,0) + \frac{\partial}{\partial s}|_{s = 0}\Psi(0,0,s)\frac{\partial}{\partial b}|_{b = 0}\sigma(0,b) = \Delta_0\frac{\partial}{\partial b}|_{b = 0}\sigma(0,b).
$$
Since $\frac{\partial}{\partial b}|_{b = 0}\sigma(0,b)$ belongs to $\i\frak{k}^\bot$ where $\Delta_0$ is injective, we have $\frac{\partial}{\partial b}|_{b = 0}\sigma(0,b) = 0$ hence,
\begin{align*}
    \tilde{\Phi}(0,b) & = \e^{\sigma(0,b)} \cdot \dbar_b - \dbar_0\\
    & = \e^{\sigma(0,b)} \cdot \dbar_0 + \e^{\sigma(0,b)}\Phi(b)\e^{-\sigma(0,b)} - \dbar_0\\
    & = \e^{\sigma(0,b)}\dbar_0(\e^{-\sigma(0,b)}) + \e^{\sigma(0,b)}\Phi(b)\e^{-\sigma(0,b)} \textrm{ by (\ref{EQ:Action de gauge}),}\\
    & = \Phi(b) + \mathrm{o}(b).
\end{align*}
We deduce the fourth point and we showed along the way that $\frac{\partial}{\partial b}|_{b = 0}\sigma(0,b) = 0$. All we have left to show is that the $\sigma(\varepsilon,b)$ are smooth as sections when $\varepsilon \in U$ and that $\sigma$ has the wanted regularity. The first part follows from elliptic regularity. For the second one, we know by Lemma \ref{LEM:Applications lisses} that,
$$
\frac{\partial}{\partial s}|_{s = 0}\Psi(0,0,s) = \Delta_0 : (\i\frak{k},\norme{\cdot}_{'\!A}) \rightarrow (\i\frak{k},\norme{\cdot}_{A'}),
$$
is an isomorphism. Therefore, we can apply the implicit function theorem with these norms and we obtain a smooth map $\tilde{\sigma} : A'(U') \times B' \rightarrow {'\!A}(\i\frak{k}^\bot)$ which verifies $\Psi(\varepsilon,b,\tilde{\sigma}(\varepsilon,b)) = 0$.

By uniqueness of $\sigma$, $\sigma_{|U'} = \tilde{\sigma}$ is smooth from $(U',\norme{\cdot}_{A'}) \times B'$ to $(\i\frak{k}^\bot,\norme{\cdot}_{'\!A})$.
\end{proof}

Following \cite[Section 3.2]{Clarke_Tipler}, we define for all $\varepsilon \in U$, the closed $2$-form $\Omega_\varepsilon = \tilde{\Phi}(\varepsilon,\cdot)^*\Omega_\varepsilon^D$ on $B$ and,
$$
\nu_\varepsilon : \fonction{B}{\frak{k}}{b}{\nu_{\infty,\varepsilon}(\dbar_{\varepsilon,b}) = \Lambda_\varepsilon F_{\varepsilon,b} + \i c_\varepsilon\Id_E}.
$$
By Proposition \ref{PRO:Tranche déformée}, this function indeed takes values in $\frak{k}$. It is easy to compute that since $\nu_{\infty,\varepsilon}$ is a $K$-equivariant moment map for $\Omega_\varepsilon^D$ and $\tilde{\Phi}(\varepsilon,\cdot)$ is a $K$-equivariant embedding, $\nu_\varepsilon$ is a $K$-equivariant moment map for $\Omega_\varepsilon$, which is a symplectic form. We must be careful about the fact that $\Omega_\varepsilon$ is not compatible with the complex structures because $\tilde{\Phi}(\varepsilon,\cdot)$ is \textit{a priori} not holomorphic, but we still have the following positivity condition.
\begin{lemma}\label{LEM:Omega positive}
    Up to shrinking $U$ and $B$, for all $b \in B$ and $v \in T_bB\backslash\{0\}$, $\Omega_\varepsilon(v,\i v)$ is positive.
\end{lemma}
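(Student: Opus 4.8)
The plan is to reduce the positivity of $\Omega_\varepsilon(v,\i v)$ at an arbitrary $(b,v)$ to the positivity at the base point $(0,v)$ for $\varepsilon = 0$, and then to observe that the latter is literally the statement that $\Omega_0^D$ is a Kähler form at $\dbar_0$ restricted to the image of the Kuranishi differential. First I would unwind the definition: since $\Omega_\varepsilon = \tilde\Phi(\varepsilon,\cdot)^*\Omega_\varepsilon^D$, we have
$$
\Omega_\varepsilon(v,\i v) = \Omega_\varepsilon^D\bigl(d\tilde\Phi(\varepsilon,b)v,\ d\tilde\Phi(\varepsilon,b)(\i v)\bigr).
$$
Here one must be careful: $\tilde\Phi(\varepsilon,\cdot)$ is not holomorphic, so $d\tilde\Phi(\varepsilon,b)(\i v) \neq J\,d\tilde\Phi(\varepsilon,b)v$ in general; the point of the lemma is precisely that the failure of compatibility is a small perturbation. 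At the base point, Proposition \ref{PRO:Tranche déformée}(4) gives $\frac{\partial}{\partial b}|_{b=0}\tilde\Phi(0,b) = \mathrm{id}$ on $V$, so $d\tilde\Phi(0,0)v = v$ and $d\tilde\Phi(0,0)(\i v) = \i v = Jv$ (the complex structure on $V \subset \Omega^{0,1}$ is multiplication by $\i$). Hence
$$
\Omega_0(v,\i v)\big|_{b=0} = \Omega_0^D(v,Jv) = \scal{J v}{J v}_0 = \norme{v}_0^2 > 0
$$
for $v \neq 0$, using that $\Omega_0^D(\alpha,\beta) = \scal{J\alpha}{\beta}_0$ and $J^2 = -\mathrm{id}$ on $(0,1)$-forms (so $\scal{Jv}{Jv}_0 = \scal{v}{v}_0$). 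This establishes strict positivity of the continuous function $(\varepsilon,b,v) \mapsto \Omega_\varepsilon(v,\i v)$ at every point of the compact slice $\{0\}\times\{0\}\times \mathbb{S}(V)$, where $\mathbb{S}(V)$ is the unit sphere of $V$ for $\norme{\cdot}_0$.

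Next I would argue by a compactness-and-continuity argument to propagate this to a neighbourhood. The map $(\varepsilon,b,v) \mapsto \Omega_\varepsilon(v,\i v)$ is continuous on $U \times B \times V$: indeed $\Omega_\varepsilon^D$ depends smoothly (hence continuously) on $\varepsilon$ by the remarks preceding Lemma \ref{LEM:Applications lisses} (all operations involving $\Theta_\varepsilon$ are smooth in the $\mathcal{C}^0$ topology), and $(\varepsilon,b) \mapsto d\tilde\Phi(\varepsilon,b)$ is continuous by Proposition \ref{PRO:Tranche déformée} together with Lemma \ref{LEM:Applications lisses} (smoothness of $(s,b) \mapsto \e^s\cdot\dbar_b - \dbar_0$ and of $\sigma$). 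Since the function is $1$-homogeneous of degree $2$ in $v$, it suffices to control it on $\mathbb{S}(V)$, which is compact because $V$ is finite-dimensional. The function is strictly positive on the compact set $\{(0,0)\}\times\mathbb{S}(V)$, so by continuity it stays bounded below by a positive constant on a neighbourhood $U'' \times B'' \times \mathbb{S}(V)$; after shrinking $U$ to $U''$ and $B$ to $B''$ (and using that $T_bB = V$ canonically, $B$ being an open subset of the vector space $V$), we get $\Omega_\varepsilon(v,\i v) > 0$ for all $\varepsilon \in U$, $b \in B$, $v \in T_bB \setminus \{0\}$, as claimed.

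The only genuinely delicate point is the continuity of $(\varepsilon,b) \mapsto d\tilde\Phi(\varepsilon,b)$ in the appropriate topology; everything else is bookkeeping. I would handle this by writing $\tilde\Phi(\varepsilon,b) = \e^{\sigma(\varepsilon,b)}\cdot\dbar_b - \dbar_0 = \e^{\sigma(\varepsilon,b)}\dbar_0(\e^{-\sigma(\varepsilon,b)}) + \e^{\sigma(\varepsilon,b)}\Phi(b)\e^{-\sigma(\varepsilon,b)}$ via \eqref{EQ:Action de gauge}, differentiating in $b$, and noting that each factor is a smooth function of $(\varepsilon,b)$ into a space of forms of bounded norm by Lemma \ref{LEM:Applications lisses} and Proposition \ref{PRO:Tranche déformée}(5) — so the derivative depends continuously on $(\varepsilon, b)$, and since $\Omega_\varepsilon^D(\alpha,\beta) = \int_X \tr(\alpha\wedge\beta^*)\wedge\Theta_\varepsilon$ is a continuous bilinear pairing jointly in $(\alpha,\beta,\varepsilon)$ when $\alpha,\beta$ have bounded $\mathcal{C}^0$ norm, the composite $\Omega_\varepsilon(v,\i v)$ is continuous. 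One could alternatively bypass this by the direct estimate $d\tilde\Phi(\varepsilon,b) = \mathrm{id}_V + \mathrm{O}(\norme{\varepsilon} + \norme{b})$ obtained from Proposition \ref{PRO:Tranche déformée}(4) and $\sigma(0,0) = 0$, $\frac{\partial}{\partial b}|_{b=0}\sigma(0,b) = 0$, making the perturbation argument fully quantitative; either route closes the proof.
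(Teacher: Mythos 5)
Your proposal is correct and follows essentially the same route as the paper: both evaluate $\Omega_0(v,\i v)$ at $(\varepsilon,b)=(0,0)$ using that $d\tilde\Phi(0,0)=\mathrm{id}_V$ is $\C$-linear, reduce to the positivity of $\Omega_0^D(v,\i v)$, and then invoke openness of positivity, for which your compactness-of-the-unit-sphere and continuity-in-$(\varepsilon,b)$ argument is exactly what the paper leaves implicit. The only cosmetic issue is your identification $Jv=\i v$ on $(0,1)$-forms, which clashes in sign with the paper's convention $J=\i^{p-q}$; this does not affect the argument, since the only input needed is that $\Omega_0^D(\cdot,\i\,\cdot)$ is positive definite on $(0,1)$-forms.
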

\begin{proof}
This is due to the fact that $\frac{\partial}{\partial b}|_{b = 0}\tilde{\Phi}(0,b) : v \mapsto v$ is $\C$-linear, hence, for all $v \in T_0B = V$,
$$
\Omega_0(v,\i v) = \Omega_0^D\left(\frac{\partial}{\partial b}|_{b = 0}\tilde{\Phi}(0,b)v,\frac{\partial}{\partial b}|_{b = 0}\tilde{\Phi}(0,b)(\i v)\right) = \Omega_0^D(v,\i v) > 0.
$$
It means that $\Omega_0(\cdot,\i \cdot)$ at $b = 0$ is a positive quadratic form. By openness of positivity, up to shrinking $U$ and $B$, $\Omega_\varepsilon$ is positive at any point $b$ of $B$, hence the result.
\end{proof}

\subsection{Bound on the norm of $b$}

Now, when $\Theta_\varepsilon$ is in the stable cone of $\E$, we want to find a $\Theta_\varepsilon$-HYM connection $\dbar_{\varepsilon,b}$ in the image of $\dbar_0 + \tilde{\Phi}(\varepsilon,\cdot)$ restricted to the orbit $\mathcal{O}$ and then control the norm of $b$ in function of the norm of $\varepsilon$. Finally, knowing the regularity of $\tilde{\Phi}$ at the origin, we can control the norm of $\dbar_{\varepsilon,b}$ in function of $\varepsilon$.

The following proposition gives an estimate on the norm of $b$ in term of the norms of $\nu_\varepsilon(b)$ and $\varepsilon$. Notice that when $\nu_\varepsilon(b) = 0$, the norm of $b$ is controlled by $\varepsilon$ only. We denote by $\norme{\cdot}$ any Euclidean norms on the finite dimensional vector spaces $V$ and $H^{n - 1,n - 1}(X,\R)$. We recall that $\norme{\cdot}_\varepsilon$ is the $L^2$ norm on $\frak{k}$ with respect to $\Vol_\varepsilon$ and $\norme{\cdot}_{\mC^0}$ is a $\mC^0$-norm on $\Omega^{n - 1,n - 1}(X,\R)$.

\begin{proposition}\label{PRO:Inégalité norme b}
    Up to shrinking $U$ and $B$, for all $\varepsilon \in U$ and $b \in \overline{\mathcal{O}}$, $\norme{b}^2 \leq C(\norme{\nu_\varepsilon(b)}_\varepsilon + \norme{\varepsilon}_{\mC^0}^2 + \norme{[\varepsilon]})$ for some positive constant $C$ independent from $\varepsilon$ and $b$.
\end{proposition}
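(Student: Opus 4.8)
\emph{Sketch of the argument.} Since we may shrink $B$, the set $\overline{\mathcal{O}}$ lies in an arbitrarily small ball about $0$, so it suffices to prove the estimate for $b\in\overline{\mathcal{O}}$ and $\varepsilon\in U$ near the origin. All the maps involved being smooth (Lemma~\ref{LEM:Applications lisses}, Proposition~\ref{PRO:Tranche déformée}), the plan is to Taylor-expand
\[
\nu_\varepsilon(b)=\nu_\varepsilon(0)+d_b\nu_\varepsilon(0)[b]+\tfrac12\,d_b^2\nu_\varepsilon(0)[b,b]+\mathrm{O}(\norme{b}^3),
\]
to bound $\nu_\varepsilon(0)$ and $d_b\nu_\varepsilon(0)[b]$ by the admissible error $\norme{[\varepsilon]}+\norme{\varepsilon}_{\mC^0}^2$ (up to a small multiple of $\norme{b}^2$ absorbed on the left), to bound the Hessian term from below by $\tfrac1C\norme{b}^2$ on $\overline{\mathcal{O}}$, and to conclude by the triangle inequality after shrinking $U$ and $B$.

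\emph{Bounding $\nu_\varepsilon(0)$.} Writing $\dbar_{\varepsilon,0}=\e^{\sigma(\varepsilon,0)}\cdot\dbar_0$ with $\sigma(0,0)=0$ gives $\norme{\sigma(\varepsilon,0)}_{\mC^2}=\mathrm{O}(\norme{\varepsilon}_{\mC^0})$, and expanding the gauge action as in the proof of Proposition~\ref{PRO:Calcul dPsi(0)}, together with the Kähler identities of Lemma~\ref{LEM:Identités de Kähler} for the balanced metric $\Theta_\varepsilon$, yields $\nu_\varepsilon(0)=\Lambda_\varepsilon F_{\dbar_0}+\i c_\varepsilon\Id_E-\i\Delta_{\Theta_\varepsilon,\nabla_0}\sigma(\varepsilon,0)+\mathrm{O}(\norme{\varepsilon}_{\mC^0}^2)$. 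Since $\nu_\varepsilon(0)\in\frak{k}$ (Proposition~\ref{PRO:Tranche déformée}), I would estimate $\norme{\nu_\varepsilon(0)}_\varepsilon$ by pairing against $a\in\frak{k}$. By Proposition~\ref{PRO:Noyau Laplacien}, applied on $\End(E)$ where the contracted curvature vanishes because $\nabla_0$ is $\Theta_0$-HYM (see (\ref{EQ:dbar_0 HYM})), every such $a$ lies in $\ker\Delta_0$ and is $\nabla_0$-parallel; hence $\langle\Delta_{\Theta_\varepsilon,\nabla_0}\sigma(\varepsilon,0),a\rangle_\varepsilon=\langle\sigma(\varepsilon,0),(\Delta_{\Theta_\varepsilon,\nabla_0}-\Delta_0)a\rangle_\varepsilon=\mathrm{O}(\norme{\varepsilon}_{\mC^0}^2)$, the term $\i c_\varepsilon\Id_E$ drops out because $\tr a=0$, and $\langle\Lambda_\varepsilon F_{\dbar_0},a\rangle_\varepsilon$ is a multiple of $\int_X\tr(\i F_{\dbar_0}\,a)\wedge\Theta_\varepsilon$, in which $\tr(\i F_{\dbar_0}\,a)$ is a closed $(1,1)$-form ($a$ parallel, $F_{\dbar_0}$ Bianchi-flat), so the integral depends only on $[\Theta_\varepsilon]=[\Theta_0]+[\varepsilon]$ and its $[\Theta_0]$-part equals $c_{\Theta_0}\int_X\tr(a)\Vol_0=0$. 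Altogether $\norme{\nu_\varepsilon(0)}_\varepsilon=\mathrm{O}(\norme{[\varepsilon]}+\norme{\varepsilon}_{\mC^0}^2)$, which is why only the cohomology class of $\varepsilon$ enters to first order.

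\emph{The linear term and the reduction.} From $F_{\dbar_0+\alpha}=F_{\dbar_0}+\nabla_0(\alpha-\alpha^*)+(\alpha-\alpha^*)\wedge(\alpha-\alpha^*)$ and Lemma~\ref{LEM:Identités de Kähler}, $d_b\nu_0(0)[v]$ equals $\i\dbar_0^*v+\i\partial_0^*v^*$ modulo an $\i\frak{k}^\bot$-valued contribution coming from the quadratic part of $\tilde{\Phi}$ (which is killed by the projection onto $\frak{k}$); this vanishes because $B\subset V$ consists of $\dbar_0$-harmonic forms, so $d_b\nu_0(0)|_V=0$, and by smoothness in $\varepsilon$, $d_b\nu_\varepsilon(0)[b]=\mathrm{O}(\norme{\varepsilon}_{\mC^0}\norme{b})$, which is absorbable via $xy\leq\tfrac12\eta^{-1}x^2+\tfrac12\eta y^2$. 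Likewise $d_b^2\nu_\varepsilon(0)=d_b^2\nu_0(0)+\mathrm{O}(\norme{\varepsilon}_{\mC^0})$, and since $\nu_0(0)=0$ with $d_b\nu_0(0)$ vanishing on $V$, the whole estimate reduces to the coercivity bound
\[
\norme{\nu_0(b)}\;\geq\;\tfrac1C\,\norme{b}^2\qquad\text{for }b\in\overline{\mathcal{O}}\text{ close to }0 .
\]

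\emph{The main obstacle} is precisely this bound. I would establish it through finite-dimensional geometric invariant theory for the reductive group $G=K^{\C}$ acting on the germ $(B,\Omega_0)$: by Lemma~\ref{LEM:Omega positive} the form $\Omega_0$ is positive (though, $\tilde{\Phi}(0,\cdot)$ being only formally an embedding, not compatible with the complex structure of $B$ — this is the price of having perturbed the Kuranishi slice), so the Kempf--Ness functional of the moment map $\nu_0$ is convex along the $\exp(\i\frak{k})$-orbits. The orbit $\mathcal{O}=G\cdot b_0\cap B$ has $0$ in its closure, since $\Gr(\E)$ is the degeneration of $\E$ obtained by conjugating the strictly upper-triangular $\gamma=\dbar_E-\dbar_0$ by the grading torus of $\bigoplus_k G_k$, whose numerical weight vanishes because all the $\F_k$ have the slope of $\E$; and since $\E$ is $[\Theta']$-stable it is simple, so the $G$-stabilizer of $b_0$ is finite. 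Consequently $\{0\}$ is the unique closed orbit in $\overline{\mathcal{O}}$, $\nu_0$ vanishes on $\overline{\mathcal{O}}$ only at $0$, and one expects $\nu_0$ to grow at least quadratically away from $0$ transverse to the compact stabilizer, which is the displayed inequality. The genuine work is to make the classical convexity and distance-to-the-closed-orbit arguments go through with the symplectic-but-not-Kähler form $\Omega_0$, exactly the adaptation of GIT alluded to at the end of Section~\ref{SEC:Résulats analytiques}'s introduction and carried out in a related setting in \cite{Clarke_Tipler,DMS}. Once this estimate is in hand, combining the three bounds above and absorbing the $\mathrm{O}(\norme{b}^3)$, $\mathrm{O}(\norme{\varepsilon}_{\mC^0}\norme{b}^2)$ and $\mathrm{O}(\norme{\varepsilon}_{\mC^0}\norme{b})$ errors into the left-hand side (legitimate after shrinking $U$ and $B$) produces the claimed inequality.
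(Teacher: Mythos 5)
There is a genuine gap, and you have in fact located it yourself: the coercivity estimate $\norme{\nu_0(b)} \geq \tfrac{1}{C}\norme{b}^2$ on $\overline{\mathcal{O}}$ is the entire content of the proposition (the reduction of the $\varepsilon$-dependent statement to it, and the bounds on $\nu_\varepsilon(0)$ and $d_b\nu_\varepsilon(0)$, are comparatively routine and your treatment of them is essentially sound). But for this key estimate you offer only a heuristic: ``$\{0\}$ is the unique closed orbit in $\overline{\mathcal{O}}$, $\nu_0$ vanishes only at $0$, and one expects $\nu_0$ to grow at least quadratically.'' Vanishing only at one point does not give quantitative quadratic growth with a uniform constant on a non-compact set, and you concede that the classical Kempf--Ness convexity arguments you would invoke do not directly apply because $\Omega_0$ is only symplectic, not Kähler. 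So the proposal reduces the proposition to an unproved claim that is at least as hard as the proposition itself.

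The paper closes exactly this gap by a direct computation rather than abstract GIT. For each $b \in \mathcal{O}$, the Hilbert--Mumford criterion produces $\xi \in \i\frak{k}$ with $\e^{t\xi} \cdot b \to 0$; diagonalising $\xi$ gives an $h$-orthogonal decomposition $E = \bigoplus_k G_{b,k}$ into holomorphic sub-bundles of $\Gr(\E)$, all of $[\Theta_0]$-slope equal to $\mu_{[\Theta_0]}(\E)$ by polystability, in which $\Phi(b)$ is \emph{strictly upper triangular}. Pairing $\nu_{\infty,\varepsilon}(\dbar_b)$ against the elements $a_{F_{k,b}} = \frac{\i}{\rk}\Id_{F_{k,b}} - \frac{\i}{\rk}\Id_{F_{k,b}^\bot} \in \frak{k}$ associated with the induced filtration, and using the block formula for the curvature of a sub-bundle together with Chern--Weil, gives \emph{exactly} $-2\pi l_{\F_{k,b}}([\varepsilon]) + \bigl(\frac{1}{\rk(F_{k,b})} + \frac{1}{\rk(F_{k,b}^\bot)}\bigr)\norme{\beta_{F_{k,b}}}_\varepsilon^2$; summing over $k$ recovers every off-diagonal component of $\Phi(b)$ and hence a lower bound $c\norme{\Phi(b)}_0^2 - C\norme{[\varepsilon]} \geq c'\norme{b}^2 - C\norme{[\varepsilon]}$, while the left-hand side is bounded above by a constant times $\norme{\Pi_\varepsilon\nu_{\infty,\varepsilon}(\dbar_b)}_\varepsilon$. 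This single computation simultaneously yields the coercivity you are missing and explains why only the cohomology class $[\varepsilon]$ enters at first order (because $l_{\F_{k,b}}([\Theta_0]) = 0$), a feature your proposal only recovers for the constant term $\nu_\varepsilon(0)$. If you want to salvage your Taylor-expansion architecture, the Hilbert--Mumford filtration argument is the ingredient you must import to prove the Hessian bound on the cone of directions tangent to $\overline{\mathcal{O}}$.
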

\begin{proof}
By density, it is enough to verify it for $b \in \mathcal{O}$.

\vspace{1mm}
\noindent\textbf{Step 1 :} Finding an orthogonal decomposition of $E$ where $\Phi(b)$ is strictly upper triangular.
\vspace{1mm}

Recall from the beginning of Section \ref{SEC:Résulats analytiques} that,
$$
\dbar_E - \dbar_0 = \sum_{1 \leq i < j \leq m} \gamma_{ij},
$$
where $\gamma_{ij} \in \Omega^{0,1}(X,\Hom(G_j,G_i))$. If $t > 0$ and $g_t = \sum_{k = 1}^m t^k\Id_{G_k}$, we have $g_t \cdot \dbar_0 = \dbar_0$ so,
$$
g_t \cdot \dbar_E - \dbar_0 = g_t(\dbar_E - \dbar_0)g_t^{-1} = \sum_{1 \leq i < j \leq m} t^{i - j}\gamma_{ij} \tend{t}{+\infty} 0.
$$
In other words, we can find Dolbeault operators $\dbar$ in the gauge orbit of $\dbar_E$ arbitrarily close to $\dbar_0$. By property of the Kuranishi slice, we can find elements $b' \in \mathcal{O}$ arbitrarily close to $0$. By Hilbert--Mumford criterion \cite[Theorem 12.4]{GRS}, there is a $\xi \in \i\frak{k}$ such that,
$$
\e^{t\xi} \cdot b \tend{t}{+\infty} 0.
$$
Since $\Phi$ is $G$-equivariant, $\e^{t\xi} \cdot \Phi(b) \tend{t}{+\infty} 0$. $\xi$ is a Hermitian holomorphic global section of $\End(\E_0)$. By the spectral theorem, we can can diagonalise it as,
$$
\xi = \sum_{k = 1}^p \lambda_k\Id_{G_{b,k}},
$$
where the $\lambda_1 < \cdots < \lambda_p$ are its increasing eigenvalues and the $G_{b,k}$ are its eigenspaces. They are sub-bundles of $\E_0$ and they are pairwise orthogonal. Since $\E_0$ is $[\Theta_0]$-polystable, it implies that the $[\Theta_0]$-slope of each $G_{b,k}$ equals the $[\Theta_0]$-slope of $\E_0$. In the orthogonal decomposition,
$$
\Omega^{0,1}(X,\End(E)) = \bigoplus_{1 \leq i,j \leq p} \Omega^{0,1}(X,\Hom(G_{b,j},G_{b,i})),
$$
we can decompose $\Phi(b)$ as,
$$
\Phi(b) = \sum_{1 \leq i,j \leq p} \gamma_{ij,b}.
$$
We have,
$$
\e^{t\xi} \cdot \Phi(b) = \sum_{1 \leq i,j \leq p} \e^{(\lambda_i - \lambda_j)t}\gamma_{ij,b} \tend{t}{+\infty} 0.
$$
We deduce that for all $i \geq j$, $\gamma_{ij,b} = 0$ so $\Phi(b)$ is strictly upper triangular. In particular, if we set, $F_{b,k} = \bigoplus_{i = 1}^k G_{b,i}$, each $F_{k,b}$ is preserved by $\dbar_b$ so,
$$
\F_{k,b} = (F_{k,b},\dbar_{E|F_{k,b}}),
$$
is a holomorphic sub-bundle of $\E_b = (E,\dbar_b)$.

\vspace{1mm}
\noindent\textbf{Step 2 :} Computing the scalar product between $\nu_{\infty,\varepsilon}(b)$ and each $a_S = \frac{\i}{\rk(S)}\Id_S - \frac{\i}{\rk(S^\bot)}\Id_{S^\bot} \in \frak{k}$.
\vspace{1mm}

Let $\mathcal{S} \subset \E_b$ be a sub-bundle with $0 < \rk(\mathcal{S}) < \rk(\E)$ and $S$ its smooth structure. Let $\Id_S$ defined as the orthogonal projection on $S$ and $\Id_{S^\bot} = \Id_E - \Id_S$ the orthogonal projection on $S^\bot$. Then, in the decomposition $E = S \oplus S^\bot$, we can write $\dbar_b = \dbar_{b|S} + \beta_{S,b}$. $\beta_{S,b}$ is the second fundamental form of $S$. and by \cite[Equation (1.6.12)]{Kobayashi},
\begin{equation}\label{EQ:Matrice par blocks courbure}
    \i F_b = \begin{pmatrix} \i F_S - \i \beta_{S,b} \wedge \beta_{S,b}^* & \i\partial_b\beta_{S,b} \\ -\i\dbar_b\beta_{S,b}^* & \i F_{S^\bot} - \i\beta_{S,b}^* \wedge \beta_{S,b} \end{pmatrix},
\end{equation}
where $F_b$ is the curvature of the Chern connection associated to $\dbar_b$ on $E$, $F_S$ is some curvature form on $\mathcal{S}$ and $F_{S^\bot}$ is some curvature form on $\E_b/\mathcal{S}$ (they depend on $b$). By Chern-Weil theory,
\begin{equation}\label{EQ:Chern-Weil}
    \int_X \tr(\i F_S) \wedge \Theta_\varepsilon = 2\pi\rk(S)\mu_{[\varepsilon]}(S), \qquad \int_X \tr(\i F_{S^\bot}) \wedge \Theta_\varepsilon = 2\pi\rk(E/S)\mu_{[\varepsilon]}(E/S),
\end{equation}
and for any $(0,1)$-form $\beta$, $\displaystyle \int_X \tr(\i\beta^*\wedge\beta) \wedge \Theta_\varepsilon = \norme{\beta}_\varepsilon^2$ and $\displaystyle \int_X \tr(\i\beta\wedge\beta^*) \wedge \Theta_\varepsilon = -\norme{\beta}_\varepsilon^2$. We can now compute that,
\begin{align*}
    \scal{\nu_{\infty,\varepsilon}(\dbar_b)}{a_S}_\varepsilon & = \scal{\Lambda_\varepsilon F_b + \i c_\varepsilon\Id_E}{a_S}_\varepsilon\\
    & = \int_X \tr\left(\Lambda_\varepsilon\i F_b\left(\frac{1}{\rk(S)}\Id_S - \frac{1}{\rk(S^\bot)}\Id_{S^\bot}\right)\right)\Vol_\varepsilon\\
    & = \frac{1}{\rk(S)}\int_X \tr(\i F_b\Id_S) \wedge \Theta_\varepsilon - \frac{1}{\rk(S^\bot)}\int_X \tr(\i F_b\Id_{S^\bot}) \wedge \Theta_\varepsilon\\
    & = \frac{1}{\rk(S)}\int_X \tr(\i F_S - \i \beta_{S,b}\wedge\beta_{S,b}^*) \wedge \Theta_\varepsilon - \frac{1}{\rk(S^\bot)}\int_X \tr(\i F_{S^\bot} - \i\beta_{S,b}^*\wedge\beta_{S,b}\Id_{S^\bot}) \wedge \Theta_\varepsilon \textrm{ by (\ref{EQ:Matrice par blocks courbure}),}\\
    & = 2\pi\mu_{[\varepsilon]}(S) + \frac{1}{\rk(S)}\norme{\beta_{S,b}}_\varepsilon^2 - 2\pi\mu_{[\varepsilon]}(E/S) + \frac{1}{\rk(S^\bot)}\norme{\beta_{S,b}}_\varepsilon^2 \textrm{ by (\ref{EQ:Chern-Weil}),}\\
    & = -2\pi l_{\mathcal{S}}([\varepsilon]) + \left(\frac{1}{\rk(S)} + \frac{1}{\rk(S^\bot)}\right)\norme{\beta_{S,b}}_\varepsilon^2.
\end{align*}

\vspace{1mm}
\noindent\textbf{Step 3 :} Inequality with $\nu_{\infty,\varepsilon}$.
\vspace{1mm}

Applying step 2 with the particular case $\mathcal{S} = \F_{k,b} \subset \E$ with $k < p$, we obtain,
$$
\beta_{S,b} = \Id_{F_{k,b}}\dbar_{b|F_{k,b}^\bot} = \sum_{i \leq k \leq j} \gamma_{ij,b}.
$$
Moreover, all the $l_{\mathcal{S}} : H^{n - 1,n - 1}(X,\R) \rightarrow \R$ are continuous thus,
\begin{align}
    \sum_{k = 1}^p \scal{\nu_{\infty,\varepsilon}(\dbar_b)}{a_{F_{k,b}}}_\varepsilon & = \sum_{k = 1}^p -2\pi l_{\F_{k,b}}([\varepsilon]) + \left(\frac{1}{\rk(F_{k,b})} + \frac{1}{\rk(F_{k,b}^\bot)}\right)\norme{\sum_{i \leq k \leq j} \gamma_{ij,b}}_\varepsilon^2 \nonumber\\
    & \geq \sum_{k = 1}^p C_3\sum_{i \leq k \leq j} \norme{\gamma_{ij,b}}_0^2 - C_4\norme{[\varepsilon]} \nonumber\\
    & \geq C_3\norme{\Phi(b)}_0^2 - C_4\norme{[\varepsilon]} \nonumber\\
    & \geq \frac{C_3}{C_1}\norme{b}^2 - C_4\norme{[\varepsilon]}, \label{EQ:Borne 1}
\end{align}
where $C_3$ and $C_4$ are positive constants that only depend on $\rk(\E)$, $\max_{1 \leq k \leq p}\left\{\norme{l_{\F_{k,b}}}\right\}$,\\
$\sup_{\varepsilon \in U}\{\Vol_\varepsilon(X)\}$ (which is finite up to shrinking $U$) and the fact that the $\norme{\cdot}_\varepsilon$ are uniformly equivalent. Clearly, it exists a constant $C_5$ independent from $\varepsilon$ and $b$ such that for all $k$ and $\varepsilon$, $\norme{a_{F_{k,b}}}_\varepsilon \leq C_5$. Let for all $\varepsilon$, $\Pi_\varepsilon$ be the orthogonal projection on $\frak{k}$ with respect to $\scal{\cdot}{\cdot}_\varepsilon$. We have,
\begin{align}
    \sum_{k = 1}^p \scal{\nu_{\infty,\varepsilon}(\dbar_b)}{a_{F_{k,b}}}_\varepsilon & = \sum_{k = 1}^p \scal{\Pi_\varepsilon\nu_{\infty,\varepsilon}(\dbar_b)}{a_{F_{k,b}}}_\varepsilon \nonumber\\
    & \leq C_5\sum_{k = 1}^p \norme{\Pi_\varepsilon\nu_{\infty,\varepsilon}(\dbar_b)}_\varepsilon \nonumber\\
    & = pC_5\norme{\Pi_\varepsilon\nu_{\infty,\varepsilon}(\dbar_b)}. \label{EQ:Borne 2}
\end{align}
We deduce from (\ref{EQ:Borne 1}) and (\ref{EQ:Borne 2}) that $\norme{b}^2 = \mathrm{O}\!\left(\norme{\Pi_\varepsilon\nu_{\infty,\varepsilon}(\dbar_b)}_\varepsilon + \norme{[\varepsilon]}\right)$ where the $\mathrm{O}$ is independent from $\varepsilon$ and $b$. From now on, all the asymptotic expansions are with respect to $(\norme{\varepsilon}_{\mC^0},b) \rightarrow (0,0)$.

\vspace{1mm}
\noindent\textbf{Step 4 :} Conclusion.
\vspace{1mm}

By Proposition \ref{PRO:Tranche déformée}, we have $\nu_\varepsilon(b) = \nu_{\infty,\varepsilon}(\dbar_{\varepsilon,b})$ with $\dbar_{\varepsilon,b} = \e^{\sigma(\varepsilon,b)} \cdot \dbar_b$ for some smooth,
$$
\sigma : \mC^0(U) \times B \rightarrow \mC^2(\Omega_0^0(X,\End_H(E,h))),
$$
which verifies $\sigma(0,0) = 0$ and $\frac{\partial}{\partial b}|_{b = 0}\sigma(0,b) = 0$. Therefore,
\begin{equation}\label{EQ:Borne sigma}
    \norme{\sigma(\varepsilon,b)}_{\mC^2} = \mathrm{O}(\norme{\varepsilon}_{\mC^0} + \norme{b}^2).
\end{equation}
Moreover, $\Delta_0$ takes values in $\frak{k}^\bot$ (for $\scal{\cdot}{\cdot}_0$) hence $\Pi_0\Delta_0 = 0$. Therefore, the map,
$$
\fonction{(U,\norme{\cdot}_{\mC^0}) \times B \times (\Omega^0(X,\End_H(E,h)),\norme{\cdot}_{\mC^2})}{\frak{k}}{(\varepsilon,b,s)}{\Pi_\varepsilon\Delta_{\varepsilon,\nabla_b}s},
$$
vanishes at each $(0,0,s)$. By Lemma \ref{LEM:Applications lisses}, it is smooth hence,
\begin{equation}\label{EQ:Borne Delta}
    \norme{\Pi_\varepsilon\Delta_{\varepsilon,\nabla_b}s}_\varepsilon = \mathrm{O}((\norme{\varepsilon}_{\mC^0} + \norme{b})\norme{s}_{\mC^2}).
\end{equation}
Using the fact that $\nu_\varepsilon$ takes values in $\frak{k}$,
\begin{align*}
    \norme{\nu_\varepsilon(b) - \Pi_\varepsilon\nu_{\infty,\varepsilon}(\dbar_b)}_\varepsilon & = \norme{\Pi_\varepsilon(\nu_\varepsilon(b) - \nu_{\infty,\varepsilon}(\dbar_b))}_\varepsilon\\
    & = \norme{\Pi_\varepsilon(\nu_{\infty,\varepsilon}(\e^{\sigma(\varepsilon,b)} \cdot \dbar_b) - \nu_{\infty,\varepsilon}(\dbar_b))}_\varepsilon\\
    & = \norme{\Pi_\varepsilon\Delta_{\varepsilon,\nabla_b}\sigma(\varepsilon,b)}_\varepsilon + \mathrm{O}(\norme{\sigma(\varepsilon,b)}_{\mC^2}^2) \textrm{ by Proposition \ref{PRO:Calcul dPsi(0)},}\\
    & = \mathrm{O}((\norme{\varepsilon}_{\mC^0} + \norme{b})\norme{\sigma(\varepsilon,b)}_{\mC^2}) + \mathrm{O}(\norme{\sigma(\varepsilon,b)}_{\mC^2}^2) \textrm{ by (\ref{EQ:Borne sigma}) and (\ref{EQ:Borne Delta}),}\\
    & = \mathrm{O}(\norme{\varepsilon}_{\mC^0}^2 + \norme{\varepsilon}_{\mC^0}\norme{b} + \norme{b}^3).
\end{align*}
Finally,
$$
\norme{b}^2 = \mathrm{O}\!\left(\norme{\Pi_\varepsilon\nu_{\infty,\varepsilon}(\dbar_b)}_\varepsilon + \norme{[\varepsilon]}\right) = \mathrm{O}(\norme{\nu_\varepsilon(b)}_\varepsilon + \norme{\varepsilon}_{\mC^0}^2 + \norme{\varepsilon}_{\mC^0}\norme{b} + \norme{b}^3 + \norme{[\varepsilon]}),
$$
so,
\begin{align*}
    \frac{\norme{b}^2}{\norme{\nu_\varepsilon(b)}_\varepsilon + \norme{\varepsilon}_{\mC^0}^2 + \norme{[\varepsilon]}} & = \mathrm{O}\!\left(\frac{\norme{\nu_\varepsilon(b)}_\varepsilon + \norme{\varepsilon}_{\mC^0}^2 + \norme{\varepsilon}_{\mC^0}\norme{b} + \norme{b}^3 + \norme{[\varepsilon]}}{\norme{\nu_\varepsilon(b)}_\varepsilon + \norme{\varepsilon}_{\mC^0}^2 + \norme{[\varepsilon]}}\right)\\
    & = \mathrm{O}\!\left(1 + \frac{\norme{\varepsilon}_{\mC^0}\norme{b}}{\norme{\nu_\varepsilon(b)}_\varepsilon + \norme{\varepsilon}_{\mC^0}^2 + \norme{[\varepsilon]}}\right) + \mathrm{o}\!\left(\frac{\norme{b}^2}{\norme{\nu_\varepsilon(b)}_\varepsilon + \norme{\varepsilon}_{\mC^0}^2 + \norme{[\varepsilon]}}\right)\\
    & = \mathrm{O}\!\left(1 + \frac{\norme{b}}{\sqrt{\norme{\nu_\varepsilon(b)}_\varepsilon + \norme{\varepsilon}_{\mC^0}^2 + \norme{[\varepsilon]}}}\right) + \mathrm{o}\!\left(\frac{\norme{b}^2}{\norme{\nu_\varepsilon(b)}_\varepsilon + \norme{\varepsilon}_{\mC^0}^2 + \norme{[\varepsilon]}}\right)
\end{align*}
\end{proof}
We deduce that $\frac{\norme{b}^2}{\norme{\nu_\varepsilon(b)}_\varepsilon + \norme{\varepsilon}_{\mC^0}^2 + \norme{[\varepsilon]}}$ is bounded hence the result.

\subsection{Finding a zero of the moment map}

The method now consists in defining a vector field whose flow converges toward a zero of the moment map. When $b \in B$ and $a \in \frak{g}$, we denote by $L_ba = \frac{\partial}{\partial t}|_{t = 0}\e^{ta} \cdot b \in T_bB$ the infinitesimal action of $a$ on $b$. Following \cite[Chapter 3]{GRS}, we define the vector field,
$$
V_\varepsilon : b \mapsto -\i L_b\nu_\varepsilon(b).
$$
This flow can be seen as a finite dimensional analogue of the Yang--Mills gradient flow introduced by Donaldson \cite{Donaldson} to find HYM connections. See also \cite[Chapter 6]{Kobayashi} for reference.

Let $t \mapsto b_{\varepsilon}(t)$ be the flow of $V_\varepsilon$ whose starting point will be defined by the following proposition.
\begin{proposition}\label{PRO:Flot défini}
    Up to shrinking $U$, for all $\varepsilon \in U$, if we choose $b_\varepsilon(0)$ close enough to $0$, $b_\varepsilon$ is defined for all non-negative times, $t \mapsto \norme{\nu_\varepsilon(b_\varepsilon(t))}_\varepsilon$ decreases and $\norme{b_\varepsilon(t)} = \mathrm{O}\!\left(\sqrt{\norme{\varepsilon}_{\mC^0}}\right)$ uniformly with respect to $t$.
\end{proposition}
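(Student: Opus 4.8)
The plan is to treat the flow of $V_\varepsilon$ as a norm-decreasing flow for $b \mapsto \norme{\nu_\varepsilon(b)}_\varepsilon^2$ which is automatically confined to a small ball around $0$ by the a priori estimate of Proposition \ref{PRO:Inégalité norme b}, provided it is started at a suitable point. First, the monotonicity: since $\nu_\varepsilon$ is a $K$-equivariant moment map for $(\Omega_\varepsilon,K)$ on $B$, differentiating along the flow and using this moment map identity, with $w = L_{b_\varepsilon(t)}\nu_\varepsilon(b_\varepsilon(t))$, gives
$$\frac{d}{dt}\norme{\nu_\varepsilon(b_\varepsilon(t))}_\varepsilon^2 = 2\scal{d\nu_\varepsilon(b_\varepsilon(t))\big(V_\varepsilon(b_\varepsilon(t))\big)}{\nu_\varepsilon(b_\varepsilon(t))}_\varepsilon = -2\,\Omega_\varepsilon(w,\i w) \leq 0,$$
the inequality coming from Lemma \ref{LEM:Omega positive}; note that no compatibility of $\Omega_\varepsilon$ with the complex structure is used, only this positivity, and the derivative is strictly negative unless $b_\varepsilon(t)$ is a critical point of $\norme{\nu_\varepsilon}_\varepsilon^2$. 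This proves the decrease statement. Second, the flow preserves $G$-orbits: since $L_b$ is $\C$-linear and $\i\nu_\varepsilon(b)\in\i\frak{k}\subset\frak{g}$, one has $V_\varepsilon(b) = L_b(-\i\nu_\varepsilon(b))$, which is tangent to $G\cdot b$; hence a flow line issued from a point of $\mathcal{O} = G\cdot b_0\cap B$ stays in $G\cdot b_0$, so it remains in $\mathcal{O}$, a fortiori in $\overline{\mathcal{O}}$, for as long as it stays in $B$.

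Next comes the choice of starting point. Recall from Step 1 of the proof of Proposition \ref{PRO:Inégalité norme b} that $0\in\overline{\mathcal{O}}$, and that $\nu_0(0)=0$ because $\dbar_0$ is $\Theta_0$-HYM. Using continuity of $b\mapsto\nu_0(b)$, I would pick, for each $\varepsilon$, a point $b_\varepsilon(0)\in\mathcal{O}$ close enough to $0$ that $\norme{\nu_0(b_\varepsilon(0))}_0\leq\norme{\varepsilon}_{\mC^0}$; then smoothness of $(\varepsilon,b)\mapsto\nu_\varepsilon(b)$ and the uniform equivalence of the norms $\norme{\cdot}_\varepsilon$ give $\norme{\nu_\varepsilon(b_\varepsilon(0))}_\varepsilon = \mathrm{O}(\norme{\varepsilon}_{\mC^0})$. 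By the monotonicity above, $\norme{\nu_\varepsilon(b_\varepsilon(t))}_\varepsilon = \mathrm{O}(\norme{\varepsilon}_{\mC^0})$ for every $t$ in the maximal interval of existence; as the flow stays in $\overline{\mathcal{O}}$ there, Proposition \ref{PRO:Inégalité norme b}, combined with $\norme{\varepsilon}_{\mC^0}^2 + \norme{[\varepsilon]} = \mathrm{O}(\norme{\varepsilon}_{\mC^0})$ (the projection onto $H^{n-1,n-1}(X,\R)$ being continuous), yields $\norme{b_\varepsilon(t)}^2\leq C''\norme{\varepsilon}_{\mC^0}$ with $C''$ independent of $\varepsilon$ and $t$.

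It remains to upgrade this to global existence by a maximal-interval argument. Fix $\rho>0$ with $\overline{B(0,\rho)}$ contained in the (possibly shrunk) $B$ and in every neighbourhood on which the results above were proven, and shrink $U$ so that $C''\norme{\varepsilon}_{\mC^0}<\rho^2/4$ on $U$. Let $T$ be the supremum of $t$ such that $b_\varepsilon$ is defined on $[0,t]$ and $b_\varepsilon([0,t])\subset\overline{B(0,\rho)}$. On $[0,T)$ the flow lies in $\overline{B(0,\rho)}\subset B$ and in $\overline{\mathcal{O}}$, so the estimate of the previous paragraph applies and gives $\norme{b_\varepsilon(t)}<\rho/2$; in particular the flow stays in the compact set $\overline{B(0,\rho/2)}$, on which $V_\varepsilon$ is a smooth bounded vector field. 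Thus if $T$ were finite the flow would extend past $T$ with $\norme{b_\varepsilon(T)}\leq\rho/2$, hence $\norme{b_\varepsilon}<\rho$ on a neighbourhood of $T$, contradicting the definition of $T$. Hence $T=\infty$: the flow exists for all $t\geq0$, and $\norme{b_\varepsilon(t)}\leq\sqrt{C''\norme{\varepsilon}_{\mC^0}} = \mathrm{O}\!\left(\sqrt{\norme{\varepsilon}_{\mC^0}}\right)$ uniformly in $t$.

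The main obstacle is precisely this confinement: the estimate that keeps the flow inside $B$ is available only while the flow is inside $B$, so one must run the bootstrap above, which closes because the a priori bound $\rho/2$ is strictly below the confinement radius $\rho$ once $\varepsilon$ is small. Everything here hinges on Proposition \ref{PRO:Inégalité norme b}, which holds only on $\overline{\mathcal{O}}$ — this is why the orbit-preservation of the first paragraph is needed, and why the initial point must be taken in $\mathcal{O}$ (where $\dbar$ is gauge equivalent to $\dbar_E$) rather than at $0$ (where, in general, it is not). The remaining ingredients — local existence and smoothness of the flow by standard ODE theory together with the smoothness statements of Lemma \ref{LEM:Applications lisses} and Proposition \ref{PRO:Tranche déformée}, the sign in the monotonicity computation, and the elementary bound $\norme{\varepsilon}_{\mC^0}^2+\norme{[\varepsilon]}=\mathrm{O}(\norme{\varepsilon}_{\mC^0})$ — are routine.
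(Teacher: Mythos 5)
Your proof is correct and follows essentially the same route as the paper's: monotonicity of $t \mapsto \norme{\nu_\varepsilon(b_\varepsilon(t))}_\varepsilon$ from the moment map identity and Lemma \ref{LEM:Omega positive}, the a priori bound from Proposition \ref{PRO:Inégalité norme b}, a starting point chosen so that $\norme{\nu_\varepsilon(b_\varepsilon(0))}_\varepsilon = \mathrm{O}(\norme{\varepsilon}_{\mC^0})$, and a confinement argument giving global existence. Your version is in fact slightly more careful than the paper's on two points it leaves implicit — that the flow preserves $\overline{\mathcal{O}}$ (needed to invoke Proposition \ref{PRO:Inégalité norme b} along the flow) and the bootstrap closing the circularity between the estimate and the flow remaining in $B$.
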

\begin{proof}
For all $t$ where the flow is defined, whatever is the choice of the starting point,
\begin{align*}
    \frac{\partial}{\partial t}\left(\frac{1}{2}\norme{\nu_\varepsilon(b_\varepsilon(t))}_\varepsilon^2\right) & = \scal{d\nu_\varepsilon(b_\varepsilon(t))(-\i L_b\nu_\varepsilon(b_\varepsilon(t)))}{\nu_\varepsilon(b_\varepsilon(t))}_\varepsilon\\
    & = -\Omega_\varepsilon(L_b\nu_\varepsilon(b_\varepsilon(t)),\i L_b\nu_\varepsilon(b_\varepsilon(t))) \textrm{ by the moment map property,}\\
    & \leq 0 \textrm{ by Lemma \ref{LEM:Omega positive}}.
\end{align*}
It proves that $t \mapsto \norme{\nu_\varepsilon(b_\varepsilon(t))}_\varepsilon$ decreases while the flow is defined. In particular, by Proposition \ref{PRO:Inégalité norme b},
\begin{equation}\label{EQ:Inégalité norme flux}
    \norme{b_\varepsilon(t)}^2 \leq C(\norme{\nu_\varepsilon(b_\varepsilon(t))}_\varepsilon + \norme{\varepsilon}_{\mC^0}^2 + \norme{[\varepsilon]}) \leq C(\norme{\nu_\varepsilon(b_\varepsilon(0))}_\varepsilon + \norme{\varepsilon}_{\mC^0}^2 + \norme{[\varepsilon]})
\end{equation}
Let $r > 0$ such that the closed ball of centre $0$ and radius $r$ is included in $B$. Assume, up to shrinking $U$, that for all $\varepsilon \in U$,
$$
\norme{\varepsilon}_{\mC^0}^2 + \norme{[\varepsilon]} \leq \frac{r^2}{2C}, \qquad \norme{\nu_\varepsilon(0)}_\varepsilon \leq \frac{r^2}{4C}.
$$
Then choose $b_\varepsilon(0)$ close enough to $0$ so $\norme{\nu_\varepsilon(b_\varepsilon(0))}_\varepsilon \leq 2\norme{\nu_\varepsilon(0)}_\varepsilon \leq \frac{r^2}{2C}$. It implies by (\ref{EQ:Inégalité norme flux}) that $\norme{b_\varepsilon(t)} \leq r$ so $b_\varepsilon$ stays in a compact set included in $B$. By the finite time explosion theorem, the flow is defined at all time. Moreover,
$$
\norme{b_\varepsilon(t)}^2 \leq C(\norme{\nu_\varepsilon(b_\varepsilon(0))}_\varepsilon + \norme{\varepsilon}_{\mC^0}^2 + \norme{[\varepsilon]}) \leq C(2\norme{\nu_\varepsilon(0)}_\varepsilon + \norme{\varepsilon}_{\mC^0}^2 + \norme{[\varepsilon]}) = \mathrm{O}(\norme{\varepsilon}_{\mC^0}).
$$
\end{proof}

Let $t \mapsto g_\varepsilon(t)$ in $G$ be the smooth function defined by,
$$
g_\varepsilon(0) = \Id_E, \qquad g_\varepsilon(t)^{-1}g_\varepsilon'(t) = \i\nu_\varepsilon(b_\varepsilon(t)).
$$
Similarly to \cite[Lemma 3.2]{GRS}, we have for all $t$,
$$
\frac{\partial}{\partial t}(g_\varepsilon(t)^{-1} \cdot b_\varepsilon(0)) = -g_\varepsilon(t)^{-1}g_\varepsilon'(t)g_\varepsilon^{-1}(t) \cdot b_\varepsilon(0) = -\i L_{g_\varepsilon(t)^{-1} \cdot b_\varepsilon(0)}\nu_\varepsilon(b_\varepsilon(t)).
$$
By uniqueness of the flow, it proves that for all $t$, $g_\varepsilon(t)^{-1} \cdot b_\varepsilon(0) = b_\varepsilon(t)$. In particular, $\mathcal{O}$ is preserved by the flow. Let for all $t$, $\tilde{g}_\varepsilon(t) = \e^{\sigma(\varepsilon,b_\varepsilon(0))}g_\varepsilon(t)\e^{-\sigma(\varepsilon,b_\varepsilon(t))} \in \G^\C$. We have,
\begin{align}
    \dbar_{\varepsilon,b_\varepsilon(t)} & = \e^{\sigma(\varepsilon,b_\varepsilon(t))} \cdot \dbar_{b_\varepsilon(t)} \nonumber\\
    & = \e^{\sigma(\varepsilon,b_\varepsilon(t))}g_\varepsilon(t)^{-1} \cdot \dbar_{b_\varepsilon(0)} \nonumber\\
    & = \e^{\sigma(\varepsilon,b_\varepsilon(t))}g_\varepsilon(t)^{-1}\e^{-\sigma(\varepsilon,b_\varepsilon(0))} \cdot \dbar_{\varepsilon,b_\varepsilon(0)} \nonumber\\
    & = \tilde{g}_\varepsilon(t)^{-1} \cdot \dbar_{\varepsilon,b_\varepsilon(0)}. \label{EQ:dbar flux}
\end{align}
Now, when $\dbar$ is a Dolbeault operator on $E$, we denote by $M_{\varepsilon,\dbar}$ the Donaldson functional \cite[Proposition 6]{Donaldson} associated with $\Theta_\varepsilon$ and $\dbar$. It takes two Hermitian metrics on $E$ as argument and is characterised by,
\begin{equation}\label{EQ:Chasles}
    \forall k_1,k_2,k_3, M_{\varepsilon,\dbar}(k_1,k_2) + M_{\varepsilon,\dbar}(k_2,k_3) = M_{\varepsilon,\dbar}(k_1,k_3),
\end{equation}
\begin{equation}\label{EQ:Dérivée Donaldson}
    \frac{\partial}{\partial s}|_{s = 0}M_{\varepsilon,\dbar}(k,\e^{-s/2} \cdot k)v = \int_X \tr\left(v(\Lambda_\varepsilon\i F_{\dbar,k} - c_\varepsilon\Id_E)\right)\Vol_\varepsilon.
\end{equation}
Here, $s \in \Omega^0(X,\End_H(E,k))$ and $v \in T_s\Omega^0(X,\End_H(E,k))$ are $k$-Hermitian, $F_{\dbar,k}$ is the curvature associated to the Chern connection of $(\dbar,k)$ and $\G^\C$ acts on metrics by $(f \cdot k)(\xi,\eta) = k(f^{-1}\xi,f^{-1}\eta)$\footnote{In the literature, it is more common to only consider the action of a positive definite Hermitian $\e^s$ with respect to the metric $k$. In this case, the notation used is $k\e^s : (\xi,\eta) \mapsto k(\e^s\xi,\eta)$. By symmetry, we obtain that $\e^{-s/2} \cdot k = k\e^s$. To avoid confusion, we only the use in this case the notation $\e^{-s/2} \cdot k$.}. Donaldson and Simpson showed that on a stable bundle, $M(h,\e^{-s/2} \cdot h)$ uniformly bounds $s$ \cite{Donaldson,Simpson}. We refer the reader to Simpson's proof of this result \cite[Proposition 5.3]{Simpson}\footnote{Simpson proves it in the Kähler case but the closedness of $\omega$ is not used in the proof of \cite[Proposition 5.3]{Simpson}. Only the closedness of $\omega^{n - 1}$ is necessary for \cite[Proposition 5.1]{Simpson}. He also only proves it when $\tr(s) = 0$ identically, but this hypothesis is only useful to get that the $u_\infty$ built in \cite[Lemma 5.4]{Simpson} is not a homothety (see \cite[Lemma 5.5]{Simpson}). For this, it is enough to know that $\int_X \tr(s)\Vol_0 = 0$ so the inequality holds in this more general case.}. Concretely, when $(E,\dbar)$ is $[\Theta_\varepsilon]$-stable, there exists positive constants $C_{1,\varepsilon},C_{2,\varepsilon}$ that depend on $\varepsilon$ (and $\dbar$) such that for all $s \in \Omega_0^0(X,\End_H(E,h))$,
\begin{equation}\label{EQ:Simpson}
    \norme{s}_\varepsilon \leq C_{1,\varepsilon} + C_{2,\varepsilon}M_{\varepsilon,\dbar}(h,\e^{-s/2} \cdot h).
\end{equation}
We could even get this inequality with a $\mC^0$ norm instead of an $L^2$ norm on $s$. Now, let for all $b \in \mathcal{O}$,
$$
\varphi_\varepsilon : b \mapsto M_{\varepsilon,\dbar_E}(h,f_{\varepsilon,b} \cdot h),
$$
where $f_{\varepsilon,b}$ is defined as the gauge transformation such that $\dbar_{\varepsilon,b} = f_{\varepsilon,b}^{-1} \cdot \dbar_E$. It is unique up to a constant homothety by simplicity of $\E$. We make it unique by normalising,
$$
\int_X \ln(\det(f_{\varepsilon,b})) \, \Vol_0 = 0.
$$
We want now to show that $\varphi_\varepsilon$ decreases with the flow and use the inequality (\ref{EQ:Simpson}) to show that $g_\varepsilon$ is bounded, thus converges up to extraction of a sub-sequence.

In the classical case where we consider the heat equation flow, the derivative of $t \mapsto \varphi_\varepsilon(b_\varepsilon(t))$ is given by $-2\norme{\nu_\varepsilon(b_\varepsilon(t))}_\varepsilon^2$ (the constant $2$ may disappear in function of the conventions). See for reference \cite[Proposition 6.9.1]{Kobayashi}, \cite[Lemma 7.1]{Simpson}, \cite[Section 1.2]{Donaldson}. It is a natural consequence of (\ref{EQ:Dérivée Donaldson}). Here, the fact that $\tilde{g}_\varepsilon(t)$ is not exactly $g_\varepsilon(t)$ makes this equality false in general. However, since these two gauge transformations are close when $\varepsilon \rightarrow 0$ uniformly with respect to $t$, we are still able to show the wanted decrease. The issue is that we fix the metric and we make the complex structure vary to find a HYM connection. However, Donaldson's functional works better with variations of the metric when the holomorphic structure is fixed. This issue however, is only technical and we can move from one to the other. The proof of the next proposition is the only part of this article where we need to consider another metric on $E$.

\begin{proposition}\label{PRO:phi décroissante}
    We have for all $t$,
    $$
    \frac{\partial}{\partial t}\varphi_\varepsilon(b_\varepsilon(t)) = -2\norme{\nu_\varepsilon(b_\varepsilon(t))}_\varepsilon^2 + \mathrm{o}(\norme{\nu_\varepsilon(b_\varepsilon(t))}_\varepsilon^2),
    $$
    where the $\mathrm{o}$ is when $\varepsilon \rightarrow 0$ and is uniform with respect to $t$. In particular, up to shrinking $U$, for all $\varepsilon$, $t \mapsto \varphi_\varepsilon(b_\varepsilon(t))$ decreases.
\end{proposition}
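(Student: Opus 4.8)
The plan is to differentiate $t\mapsto\varphi_\varepsilon(b_\varepsilon(t))=M_{\varepsilon,\dbar_E}(h,f_{\varepsilon,b_\varepsilon(t)}\cdot h)$ by means of the variational formula (\ref{EQ:Dérivée Donaldson}) for Donaldson's functional and to check that the ``ideal'' answer $-2\norme{\nu_\varepsilon(b_\varepsilon(t))}_\varepsilon^2$ picks up only a relative error of size $\mathrm{O}(\sqrt{\norme{\varepsilon}_{\mC^0}})$. Set $\ell_t=f_{\varepsilon,b_\varepsilon(t)}\cdot h$, which depends smoothly on $t$. From (\ref{EQ:dbar flux}) and $\dbar_{\varepsilon,b_\varepsilon(0)}=f_{\varepsilon,b_\varepsilon(0)}^{-1}\cdot\dbar_E$ one gets $\dbar_{\varepsilon,b_\varepsilon(t)}=(f_{\varepsilon,b_\varepsilon(0)}\tilde g_\varepsilon(t))^{-1}\cdot\dbar_E$, so by simplicity of $\E$ we have $f_{\varepsilon,b_\varepsilon(t)}=\lambda(t)\,f_{\varepsilon,b_\varepsilon(0)}\tilde g_\varepsilon(t)$ for a constant $\lambda(t)\in\C^*$ fixed by the normalisation $\int_X\ln\det(f_{\varepsilon,b})\Vol_0=0$; this homothety factor is harmless because $\int_X\tr(\Lambda_\varepsilon\i F_{\dbar_E,\ell_t}-c_\varepsilon\Id_E)\Vol_\varepsilon=0$ by the Chern--Weil definition of $c_\varepsilon$. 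Writing $\ell_u=\e^{-s(u)/2}\cdot\ell_t$ near $u=t$ with $s(t)=0$, the cocycle identity (\ref{EQ:Chasles}) and (\ref{EQ:Dérivée Donaldson}) give
$$
\frac{\partial}{\partial t}\varphi_\varepsilon(b_\varepsilon(t))=\int_X\tr\!\big(\dot s_t\,\kappa_t\big)\Vol_\varepsilon,\qquad\kappa_t:=\Lambda_\varepsilon\i F_{\dbar_E,\ell_t}-c_\varepsilon\Id_E,\quad\dot s_t:=\tfrac{d}{du}\big|_{u=t}s(u).
$$

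Next I would identify the two factors. The gauge covariance $F_{f\cdot\dbar,f\cdot k}=fF_{\dbar,k}f^{-1}$ applied to $\dbar_E=f_{\varepsilon,b_\varepsilon(t)}\cdot\dbar_{\varepsilon,b_\varepsilon(t)}$ and $\ell_t=f_{\varepsilon,b_\varepsilon(t)}\cdot h$ yields $\kappa_t=f_{\varepsilon,b_\varepsilon(t)}\,(\i\nu_\varepsilon(b_\varepsilon(t)))\,f_{\varepsilon,b_\varepsilon(t)}^{-1}$, and $\kappa_t$ is genuinely $\ell_t$-Hermitian. On the other hand $\dot s_t=-(\dot q_t+\dot q_t^{*_{\ell_t}})$ with $\dot q_t=\big(\tfrac{d}{du}\big|_{u=t}f_{\varepsilon,b_\varepsilon(u)}\big)f_{\varepsilon,b_\varepsilon(t)}^{-1}$. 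Feeding in $\tilde g_\varepsilon=\e^{\sigma_0}g_\varepsilon\e^{-\sigma(t)}$ (with $\sigma_0=\sigma(\varepsilon,b_\varepsilon(0))$ and $\sigma(t)=\sigma(\varepsilon,b_\varepsilon(t))$), the identity $g_\varepsilon^{-1}g_\varepsilon'=\i\nu_\varepsilon(b_\varepsilon(t))$ and the relation $f_{\varepsilon,b_\varepsilon(0)}\e^{\sigma_0}g_\varepsilon(t)=\lambda(t)^{-1}f_{\varepsilon,b_\varepsilon(t)}\e^{\sigma(t)}$, one gets (the scalar $\lambda$ dropping out as above)
$$
\dot q_t=f_{\varepsilon,b_\varepsilon(t)}\Big(\e^{\sigma(t)}(\i\nu_\varepsilon(b_\varepsilon(t)))\e^{-\sigma(t)}+\e^{\sigma(t)}R(t)\e^{-\sigma(t)}\Big)f_{\varepsilon,b_\varepsilon(t)}^{-1},\qquad R(t):=\tfrac{d}{dt}\e^{-\sigma(t)}\cdot\e^{\sigma(t)},
$$
hence $\dot q_t=\kappa_t+E'_t$ where $E'_t$ is the $f_{\varepsilon,b_\varepsilon(t)}$-conjugate of $(\e^{\sigma(t)}(\i\nu_\varepsilon(b_\varepsilon(t)))\e^{-\sigma(t)}-\i\nu_\varepsilon(b_\varepsilon(t)))+\e^{\sigma(t)}R(t)\e^{-\sigma(t)}$, and so $\dot s_t=-2\kappa_t-(E'_t+(E'_t)^{*_{\ell_t}})$.

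Plugging this in, the main term survives precisely because the trace is conjugation-invariant: $\int_X\tr(-2\kappa_t^2)\Vol_\varepsilon=-2\int_X\tr((\i\nu_\varepsilon(b_\varepsilon(t)))^2)\Vol_\varepsilon=-2\norme{\nu_\varepsilon(b_\varepsilon(t))}_\varepsilon^2$. For the remainder $\int_X\tr((E'_t+(E'_t)^{*_{\ell_t}})\kappa_t)\Vol_\varepsilon$ the crucial move is to estimate everything with the Hilbert--Schmidt norms attached to the metric $\ell_t$: since $f_{\varepsilon,b_\varepsilon(t)}\colon(\End(E),h)\to(\End(E),\ell_t)$ is a pointwise isometry, the \emph{a priori} uncontrolled conjugation by $f_{\varepsilon,b_\varepsilon(t)}$ becomes harmless, giving $\norme{\kappa_t}_{L^2(\ell_t),\Vol_\varepsilon}=\norme{\nu_\varepsilon(b_\varepsilon(t))}_\varepsilon$ and $\norme{E'_t}_{L^2(\ell_t),\Vol_\varepsilon}=\mathrm{O}\!\big(\norme{\sigma(t)}_{\mC^0}\norme{\nu_\varepsilon(b_\varepsilon(t))}_\varepsilon+\norme{\dot\sigma(t)}_{L^2}\big)$. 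I would then invoke the uniform-in-$t$ estimates $\norme{\sigma(\varepsilon,b_\varepsilon(t))}_{\mC^2}=\mathrm{O}(\norme{\varepsilon}_{\mC^0})$ and $\norme{\tfrac{d}{dt}\sigma(\varepsilon,b_\varepsilon(t))}_{\mC^2}=\mathrm{O}(\sqrt{\norme{\varepsilon}_{\mC^0}}\,\norme{\nu_\varepsilon(b_\varepsilon(t))}_\varepsilon)$: the first follows from (\ref{EQ:Borne sigma}) together with $\norme{b_\varepsilon(t)}=\mathrm{O}(\sqrt{\norme{\varepsilon}_{\mC^0}})$ (Proposition \ref{PRO:Flot défini}), the second from $\frac{\partial}{\partial b}|_{b=0}\sigma(0,b)=0$ (whence $\norme{d_b\sigma(\varepsilon,b_\varepsilon(t))}=\mathrm{O}(\sqrt{\norme{\varepsilon}_{\mC^0}})$ by Taylor) together with $\norme{b_\varepsilon'(t)}=\norme{V_\varepsilon(b_\varepsilon(t))}=\mathrm{O}(\norme{\nu_\varepsilon(b_\varepsilon(t))}_\varepsilon)$. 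Cauchy--Schwarz on $X$ then bounds the remainder by $\mathrm{O}(\sqrt{\norme{\varepsilon}_{\mC^0}}\,\norme{\nu_\varepsilon(b_\varepsilon(t))}_\varepsilon^2)$, which is the desired $\mathrm{o}(\norme{\nu_\varepsilon(b_\varepsilon(t))}_\varepsilon^2)$ uniformly in $t$; for $\norme{\varepsilon}_{\mC^0}$ small enough the coefficient $-2+\mathrm{o}(1)$ is negative, so $t\mapsto\varphi_\varepsilon(b_\varepsilon(t))$ decreases. The main obstacle is exactly this last bookkeeping step: one must carry every error through norms intrinsic to the evolving metric $\ell_t$ (only the $\ell_t$-isometry property of the conjugations is available, not a bound on their $\mC^0$-operator norms) and track the uniformity in $t$ of the $\sigma$- and $\dot\sigma$-bounds via Proposition \ref{PRO:Flot défini}, while isolating the single conjugation-invariant term that contributes at leading order.
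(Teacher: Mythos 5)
Your proposal is correct and uses the same core ingredients as the paper's proof: the variational formula (\ref{EQ:Dérivée Donaldson}) for Donaldson's functional, the conjugation identity $\Lambda_\varepsilon\i F_{\dbar_E,\ell_t}-c_\varepsilon\Id_E=f_{\varepsilon,b_\varepsilon(t)}(\i\nu_\varepsilon(b_\varepsilon(t)))f_{\varepsilon,b_\varepsilon(t)}^{-1}$, the derivative of the Hermitian part of the polar decomposition giving $-2\Re(\tilde g_\varepsilon')$, and the estimates $\norme{\sigma}_{\mC^0}=\mathrm{O}(\norme{\varepsilon}_{\mC^0})$, $\norme{\dot\sigma}=\mathrm{O}(\sqrt{\norme{\varepsilon}_{\mC^0}}\,\norme{\nu_\varepsilon}_\varepsilon)$ coming from $\sigma(0,0)=0$, $\frac{\partial}{\partial b}|_{b=0}\sigma(0,b)=0$ and Proposition \ref{PRO:Flot défini}. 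The one organizational difference is how uniformity in $t$ is obtained: the paper observes that $\varphi_\varepsilon$ is defined intrinsically (independently of the choice of starting point of the flow), so it suffices to compute the derivative at $t=0$ with the single fixed reference metric $k=f_{\varepsilon,b_\varepsilon(0)}\cdot h$, the uniform bound $\norme{b_\varepsilon(t)}=\mathrm{O}(\sqrt{\norme{\varepsilon}_{\mC^0}})$ then making the $t=0$ error estimate automatically uniform; you instead differentiate at a general time $t$ against the evolving metric $\ell_t$, which forces you to carry the a priori unbounded conjugation by $f_{\varepsilon,b_\varepsilon(t)}$ through the estimates and to neutralize it via the pointwise $\ell_t$-isometry property. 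Both routes work, and your handling of the conjugation (measuring $\kappa_t$ and the error $E'_t$ in $\ell_t$-Hilbert--Schmidt norms, where conjugation by $f_{\varepsilon,b_\varepsilon(t)}$ is an isometry) is exactly the right fix for the extra difficulty you create; the paper's restart-the-flow reduction simply sidesteps that bookkeeping. Your treatment of the normalising homothety $\lambda(t)$ via $\int_X\tr(\kappa_t)\Vol_\varepsilon=0$ is also fine (the paper's normalisation in fact forces $\lambda(t)=1$, since $\det(g_\varepsilon(t))=1$ and $\int_X\tr(\sigma)\Vol_0=0$).
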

\begin{proof}
$\varphi_\varepsilon$ is defined intrinsically in the sense that it doesn't depend on the choice of the starting point $b_\varepsilon(0)$. Since we have moreover a uniform bound on the norm of $b_\varepsilon(t)$ given by Proposition \ref{PRO:Flot défini}, it is enough to verify the wanted equality at $t = 0$ and the uniform bound follows.

\vspace{1mm}
\noindent\textbf{Step 1 :} Derivative of the Hermitian part of a gauge transformation.
\vspace{1mm}

For all $t$ near $0$, by (\ref{EQ:dbar flux}), we have $\dbar_{\varepsilon,b_\varepsilon(t)} = \tilde{g}_\varepsilon(t)^{-1} \cdot \dbar_{\varepsilon,b_\varepsilon(0)} = \tilde{g}_\varepsilon(t)^{-1}f_{\varepsilon,b_\varepsilon(0)}^{-1} \cdot \dbar_E$ hence $f_{\varepsilon,b_\varepsilon(t)} = f_{\varepsilon,b_\varepsilon(0)}\tilde{g}_\varepsilon(t)$. In particular, if we set $k = f_{\varepsilon,b_\varepsilon(0)} \cdot h$, we have,
$$
f_{\varepsilon,b_\varepsilon(t)} \cdot h = f_{\varepsilon,b_\varepsilon(0)}\tilde{g}_\varepsilon(t)f_{\varepsilon,b_\varepsilon(0)}^{-1} \cdot k.
$$

Let us write the polar decomposition of $f_{\varepsilon,b_\varepsilon(0)}\tilde{g}_\varepsilon(t)f_{\varepsilon,b_\varepsilon(0)}^{-1}$ with respect to $k$ as $\e^{-s(t)/2}u(t)$ where $u(t)$ is the unitary part (on the right) of $f_{\varepsilon,b_\varepsilon(0)}\tilde{g}_\varepsilon(t)f_{\varepsilon,b_\varepsilon(0)}^{-1}$. In particular,
$$
\e^{-s(t)} = f_{\varepsilon,b_\varepsilon(0)}\tilde{g}_\varepsilon(t)f_{\varepsilon,b_\varepsilon(0)}^{-1}(f_{\varepsilon,b_\varepsilon(0)}\tilde{g}_\varepsilon(t)f_{\varepsilon,b_\varepsilon(0)}^{-1})^\dagger,
$$
where $\cdot^\dagger$ is the adjoint with respect to $k$. $s$ is smooth with respect to any Sobolev norm and we have $s(0) = 0$ because $\tilde{g}_\varepsilon(0) = \Id_E$ so,
\begin{align*}
    -s(t) & \landau{t}{0} \e^{-s(t)} - \Id_E + \mathrm{o}(t)\\
    & \landau{t}{0} f_{\varepsilon,b_\varepsilon(0)}\tilde{g}_\varepsilon(t)f_{\varepsilon,b_\varepsilon(0)}^{-1}f_{\varepsilon,b_\varepsilon(0)}^{-1\dagger}\tilde{g}_\varepsilon(t)^\dagger f_{\varepsilon,b_\varepsilon(0)}^\dagger - \Id_E + \mathrm{o}(t)\\
    & \landau{t}{0} (f_{\varepsilon,b_\varepsilon(0)}\tilde{g}_\varepsilon'(0)f_{\varepsilon,b_\varepsilon(0)}^{-1} + f_{\varepsilon,b_\varepsilon(0)}^{-1\dagger}\tilde{g}_\varepsilon'(0)^\dagger f_{\varepsilon,b_\varepsilon(0)}^\dagger)t + \mathrm{o}(t)\\
    & \landau{t}{0} 2\Re_k(f_{\varepsilon,b_\varepsilon(0)}\tilde{g}_\varepsilon'(0)f_{\varepsilon,b_\varepsilon(0)}^{-1})t + \mathrm{o}(t),
\end{align*}
where $\Re_k$ is the Hermitian part with respect to $k$. We deduce that,
\begin{equation}\label{EQ:s'(0)}
    s'(0) = -2\Re_k(f_{\varepsilon,b_\varepsilon(0)}\tilde{g}_\varepsilon'(0)f_{\varepsilon,b_\varepsilon(0)}^{-1}).
\end{equation}
Now, recall that for all $t$, $\tilde{g}_\varepsilon(t) = \e^{\sigma(\varepsilon,b_\varepsilon(0))}g_\varepsilon(t)\e^{-\sigma(\varepsilon,b_\varepsilon(t))}$. Therefore, since $\norme{b_\varepsilon(0)} \landau{\varepsilon}{0} \mathrm{o}(1)$, we have,
\begin{align*}
    \tilde{g}_\varepsilon'(0) & \ =\ \e^{\sigma(\varepsilon,b_\varepsilon(0))}g_\varepsilon'(0)\e^{-\sigma(\varepsilon,b_\varepsilon(0))} - \e^{\sigma(\varepsilon,b_\varepsilon(0))}d\exp(-\sigma(\varepsilon,b_\varepsilon(0)))\frac{\partial}{\partial b}|_{b = b_\varepsilon(0)}\sigma(\varepsilon,b)b_\varepsilon'(0)\\
    & \landau{\varepsilon}{0} g_\varepsilon'(0) + \mathrm{o}(g_\varepsilon'(0)) + \mathrm{o}(b_\varepsilon'(0)) \textrm{ because $\sigma(0,0) = 0$ and $\frac{\partial}{\partial b}|_{b = 0}\sigma(0,b) = 0$,}\\
    & \landau{\varepsilon}{0} \i\nu_\varepsilon(b_\varepsilon(0)) + \mathrm{o}(\nu_\varepsilon(b_\varepsilon(0))) + \mathrm{o}(L_{b_\varepsilon(0)}\nu_\varepsilon(b_\varepsilon(0)))\\
    & \landau{\varepsilon}{0} \i\nu_\varepsilon(b_\varepsilon(0)) + \mathrm{o}(\nu_\varepsilon(b_\varepsilon(0))).
\end{align*}
We deduce from this equality and (\ref{EQ:s'(0)}) that,
\begin{align}
    f_{\varepsilon,b_\varepsilon(0)}^{-1}s'(0)f_{\varepsilon,b_\varepsilon(0)} & \ =\ -2f_{\varepsilon,b_\varepsilon(0)}^{-1}\Re_k(f_{\varepsilon,b_\varepsilon(0)}\tilde{g}_\varepsilon'(0)f_{\varepsilon,b_\varepsilon(0)}^{-1})f_{\varepsilon,b_\varepsilon(0)} \nonumber\\
    & \ =\ -2\Re_h(\tilde{g}_\varepsilon'(0)) \textrm{ because } k = f_{\varepsilon,b_\varepsilon(0)} \cdot h, \nonumber\\
    & \landau{\varepsilon}{0} -2\i\nu_\varepsilon(b_\varepsilon(0)) + \mathrm{o}(\nu_\varepsilon(b_\varepsilon(0))). \label{EQ:f-1 s'(0) f}
\end{align}

\vspace{1mm}
\noindent\textbf{Step 2 :} Relation between curvatures.
\vspace{1mm}

Since $f_{\varepsilon,b_\varepsilon(t)} \cdot h = \e^{-s(t)/2}u(t) \cdot k = \e^{-s(t)/2} \cdot k$, we have, by (\ref{EQ:Chasles}),
$$
\varphi_\varepsilon(b_\varepsilon(t)) - \varphi_\varepsilon(b_\varepsilon(0)) = M_{\varepsilon,\dbar_E}(h,\e^{-s(t)/2} \cdot k) - M_{\varepsilon,\dbar_E}(h,k) = M_{\varepsilon,\dbar_E}(k,\e^{-s(t)/2} \cdot k).
$$
Moreover, if we call $\partial_{E,k}$ the $(1,0)$ part of the Chern connection associated with $(\dbar_E,k)$ and we set $\partial_{\varepsilon,b_\varepsilon(0)} = f_{\varepsilon,b_\varepsilon(0)}^{-1} \circ \partial_{E,k} \circ f_{\varepsilon,b_\varepsilon(0)}$, we can compute that the connection $\nabla_{\varepsilon,b_\varepsilon(0)} = \partial_{\varepsilon,b_\varepsilon(0)} + \dbar_{\varepsilon,b_\varepsilon(0)}$ is unitary with respect to $h$. Thus, by uniqueness, $\nabla_{\varepsilon,b_\varepsilon(0)}$ is the Chern connection associated with $(\dbar_{\varepsilon,b_\varepsilon(0)},h)$. Let $\nabla_{E,k} = \partial_{E,k} + \dbar_E$ be the associated connection. Notice that
$$
\nabla_{E,k} = f_{\varepsilon,b_\varepsilon(0)} \circ \nabla_{\varepsilon,b_\varepsilon(0)} \circ f_{\varepsilon,b_\varepsilon(0)}^{-1}.
$$
These connections are conjugated. It is due to the fact that $f_{\varepsilon,b_\varepsilon(0)} : (E,\dbar_{\varepsilon,b_\varepsilon(0)},h) \rightarrow (E,\dbar_E,k)$ is both holomorphic and unitary.

Therefore,
$$
F_{\dbar_E,k} = \nabla_{E,k} \circ \nabla_{E,k} = f_{\varepsilon,b_\varepsilon(0)} \circ \nabla_{\varepsilon,b_\varepsilon(0)} \circ \nabla_{\varepsilon,b_\varepsilon(0)} \circ f_{\varepsilon,b_\varepsilon(0)}^{-1} = f_{\varepsilon,b_\varepsilon(0)}F_{\dbar_{\varepsilon,b_\varepsilon(0)},h}f_{\varepsilon,b_\varepsilon(0)}^{-1}.
$$
Thus,
\begin{equation}\label{EQ:Contractions des courbures conjuguées}
    \Lambda_\varepsilon\i F_{\dbar_E,k} - c_\varepsilon\Id_E = \i f_{\varepsilon,b_\varepsilon(0)}\nu_\varepsilon(b_\varepsilon(0))f_{\varepsilon,b_\varepsilon(0)}^{-1}.
\end{equation}

\vspace{1mm}
\noindent\textbf{Step 3 :} Conclusion.
\vspace{1mm}

Finally, by (\ref{EQ:Dérivée Donaldson}), when $\varepsilon \rightarrow 0$,
\begin{align*}
    \frac{\partial}{\partial t}|_{t = 0}\varphi_\varepsilon(b_\varepsilon(t)) & \ =\ \int_X \tr\left(s'(0)(\Lambda_\varepsilon\i F_{\dbar_E,k} - c_\varepsilon\Id_E)\right)\Vol_\varepsilon\\
    & \ =\ \int_X \tr\left(f_{\varepsilon,b_\varepsilon(0)}^{-1}s'(0)f_{\varepsilon,b_\varepsilon(0)}\i\nu_\varepsilon(b_\varepsilon(0))\right)\Vol_\varepsilon \textrm{ by (\ref{EQ:Contractions des courbures conjuguées}),}\\
    & \landau{\varepsilon}{0} \int_X \tr\left((-2\i\nu_\varepsilon(b_\varepsilon(0)) + \mathrm{o}(\i\nu_\varepsilon(b_\varepsilon(0))))\i\nu_\varepsilon(b_\varepsilon(0))\right)\Vol_\varepsilon \textrm{ by (\ref{EQ:f-1 s'(0) f}),}\\
    & \landau{\varepsilon}{0} -2\norme{\nu_\varepsilon(0)}_\varepsilon^2 + \mathrm{o}(\norme{\nu_\varepsilon(0)}_\varepsilon^2).
\end{align*}
\end{proof}

We are now ready to conclude this subsection.
\begin{proposition}\label{PRO:Limite b_varepsilon}
    If $\E$ is $[\Theta_\varepsilon]$-semi-stable, there exists a $b_{\infty,\varepsilon} \in B \cap \overline{\mathcal{O}}$ such that $\nu_\varepsilon(b_{\infty,\varepsilon}) = 0$. If moreover, $\E$ is $[\Theta_\varepsilon]$-stable, then $b_{\infty,\varepsilon} \in \mathcal{O}$. Moreover, $\norme{b_{\infty,\varepsilon}} = \mathrm{O}\!\left(\norme{\varepsilon}_{\mC^0} + \sqrt{\norme{[\varepsilon]}}\right)$.
\end{proposition}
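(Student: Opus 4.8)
The plan is to run the moment-map flow $b_\varepsilon$ of Section~\ref{SEC:Résulats analytiques} to $t=+\infty$. I would first establish the statement when $\E$ is $[\Theta_\varepsilon]$-stable, where Simpson's estimate is available, and then recover the semi-stable case by approximating $\Theta_\varepsilon$ from inside the stable cone.

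\emph{The stable case.} Assume $\E$ is $[\Theta_\varepsilon]$-stable. By Proposition~\ref{PRO:Flot défini}, after shrinking $U$ the flow $b_\varepsilon$ is defined for all $t\geq 0$, remains in a fixed compact ball $\overline{B(0,r)}\subset B$, may be started from a point $b_\varepsilon(0)\in\mathcal{O}$, and $t\mapsto\norme{\nu_\varepsilon(b_\varepsilon(t))}_\varepsilon$ is non-increasing; let $\ell_\varepsilon\geq 0$ denote its limit. The first step is to show $\ell_\varepsilon=0$. By Proposition~\ref{PRO:phi décroissante}, after a further shrinking of $U$, $\frac{\partial}{\partial t}\varphi_\varepsilon(b_\varepsilon(t))\leq -\norme{\nu_\varepsilon(b_\varepsilon(t))}_\varepsilon^2\leq -\ell_\varepsilon^2$, so $\varphi_\varepsilon(b_\varepsilon(t))\leq\varphi_\varepsilon(b_\varepsilon(0))-\ell_\varepsilon^2\,t$ for all $t$. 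On the other hand, writing the $h$-polar decomposition $f_{\varepsilon,b}=\e^{-s_{\varepsilon,b}/2}u_{\varepsilon,b}$ we get $f_{\varepsilon,b}\cdot h=\e^{-s_{\varepsilon,b}/2}\cdot h$, while the normalisation $\int_X\ln(\det(f_{\varepsilon,b}))\,\Vol_0=0$ forces $\int_X\tr(s_{\varepsilon,b})\,\Vol_0=0$, i.e. $s_{\varepsilon,b}\in\Omega_0^0(X,\End_H(E,h))$; since $(E,\dbar_E)$ is $[\Theta_\varepsilon]$-stable, Simpson's inequality~(\ref{EQ:Simpson}) then gives $\varphi_\varepsilon(b)=M_{\varepsilon,\dbar_E}(h,\e^{-s_{\varepsilon,b}/2}\cdot h)\geq -C_{1,\varepsilon}/C_{2,\varepsilon}$, so $t\mapsto\varphi_\varepsilon(b_\varepsilon(t))$ is bounded below and $\ell_\varepsilon=0$, that is $\norme{\nu_\varepsilon(b_\varepsilon(t))}_\varepsilon\to 0$. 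To produce the limit point I would then exploit the $\mC^0$-version of~(\ref{EQ:Simpson}), which bounds $\norme{s_{\varepsilon,b_\varepsilon(t)}}_{\mC^0}$ uniformly in $t$: since $f_{\varepsilon,b_\varepsilon(t)}=f_{\varepsilon,b_\varepsilon(0)}\tilde{g}_\varepsilon(t)$ with $\tilde{g}_\varepsilon(t)=\e^{\sigma(\varepsilon,b_\varepsilon(0))}g_\varepsilon(t)\e^{-\sigma(\varepsilon,b_\varepsilon(t))}$ and $\sigma(\varepsilon,\cdot)$ is $\mC^2$-bounded on $\overline{B(0,r)}$ by Proposition~\ref{PRO:Tranche déformée}, this keeps $g_\varepsilon(t)$ and $g_\varepsilon(t)^{-1}$ uniformly bounded, hence $g_\varepsilon(t)$ inside a fixed compact subset of $G$. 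Choosing $t_n\to+\infty$ with $g_\varepsilon(t_n)\to g_\infty\in G$, we obtain $b_\varepsilon(t_n)=g_\varepsilon(t_n)^{-1}\cdot b_\varepsilon(0)\to g_\infty^{-1}\cdot b_\varepsilon(0)=:b_{\infty,\varepsilon}$, which lies in $B\cap(G\cdot b_\varepsilon(0))=\mathcal{O}$, and $\nu_\varepsilon(b_{\infty,\varepsilon})=0$ by continuity of $\nu_\varepsilon$.

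\emph{The semi-stable case.} As $\mC_s(\E)\neq\emptyset$, Theorem~\ref{THE:Structures cônes} yields $\Theta_\varepsilon\in\mC_{ss}(\E)=\overline{\mC_s(\E)}\cap\{\Theta>0\}$, so I would choose balanced forms $\Theta_0+\varepsilon_n$ with $\varepsilon_n\to\varepsilon$ in the $\mC^0$ topology and $\E$ $[\Theta_0+\varepsilon_n]$-stable; for $n$ large, $\varepsilon_n\in U$. The stable case supplies $b_n\in\mathcal{O}$ with $\nu_{\varepsilon_n}(b_n)=0$, and Proposition~\ref{PRO:Inégalité norme b} gives $\norme{b_n}^2\leq C(\norme{\varepsilon_n}_{\mC^0}^2+\norme{[\varepsilon_n]})$, a bound converging to $C(\norme{\varepsilon}_{\mC^0}^2+\norme{[\varepsilon]})$; hence, after shrinking $U$, the $b_n$ stay in a fixed compact subset of $B$. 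Extracting $b_n\to b_{\infty,\varepsilon}\in B\cap\overline{\mathcal{O}}$ and using that $(\varepsilon,b)\mapsto\nu_\varepsilon(b)$ is continuous --- which follows from the smoothness, in both variables, of $\sigma$, of $\tilde{\Phi}$, and of $\varepsilon\mapsto\Lambda_\varepsilon,c_\varepsilon$ --- gives $\nu_\varepsilon(b_{\infty,\varepsilon})=\lim_n\nu_{\varepsilon_n}(b_n)=0$. In both cases, applying Proposition~\ref{PRO:Inégalité norme b} once more, now to $b_{\infty,\varepsilon}\in\overline{\mathcal{O}}$ with $\nu_\varepsilon(b_{\infty,\varepsilon})=0$, gives $\norme{b_{\infty,\varepsilon}}^2\leq C(\norme{\varepsilon}_{\mC^0}^2+\norme{[\varepsilon]})$, that is $\norme{b_{\infty,\varepsilon}}=\mathrm{O}\!\left(\norme{\varepsilon}_{\mC^0}+\sqrt{\norme{[\varepsilon]}}\right)$.

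The step I expect to be the main obstacle is the equality $\ell_\varepsilon=0$ in the stable case. It relies on the lower bound for Donaldson's functional $\varphi_\varepsilon$ along the flow, which is only accessible through Simpson's inequality~(\ref{EQ:Simpson}), and the latter holds exclusively on $[\Theta_\varepsilon]$-stable bundles; this is precisely the reason why the semi-stable case must be handled indirectly, by approximating $\Theta_\varepsilon$ from within $\mC_s(\E)$. A secondary delicate point is making sure that in the stable case the limit $b_{\infty,\varepsilon}$ belongs to $\mathcal{O}$ and not merely to $\overline{\mathcal{O}}$: this is exactly what the uniform $\mC^0$ control of the gauge transformations $g_\varepsilon(t)$, coming from the sharper form of~(\ref{EQ:Simpson}), is designed to provide.
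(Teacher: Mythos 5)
Your proposal is correct and follows essentially the same route as the paper's proof: run the moment-map flow, use Proposition~\ref{PRO:phi décroissante} together with Simpson's lower bound (\ref{EQ:Simpson}) on the Donaldson functional to force $\norme{\nu_\varepsilon(b_\varepsilon(t))}_\varepsilon \to 0$ and to keep $g_\varepsilon(t)$ in a compact subset of $G$ so that the subsequential limit stays in $\mathcal{O}$, then handle the semi-stable case by approximating from within the stable cone and invoking Proposition~\ref{PRO:Inégalité norme b} for the norm bound. The only deviations are cosmetic (you establish $\ell_\varepsilon = 0$ before extracting the limit point, and you use the $\mC^0$ form of (\ref{EQ:Simpson}) where the paper uses the $L^2$ form plus finite-dimensionality of $G$), and both variants are explicitly sanctioned by the text.
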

\begin{proof}
Assume first of all that $\E$ is $[\Theta_\varepsilon]$-stable. Let for all $t$, $f_{\varepsilon,b_\varepsilon(t)} \cdot h = \e^{-s(t)/2} \cdot h$ for some $s(t)$ Hermitian with respect to $h$. We may assume that for all $t$,
$$
\int_X \tr(s(t))\Vol_0 = 0.
$$
Then, by the inequality (\ref{EQ:Simpson}) and Proposition \ref{PRO:phi décroissante}, for all $t$,
$$
\norme{s(t)}_\varepsilon \leq C_{1,\varepsilon} + C_{2,\varepsilon}M(h,\e^{-s(t)/2} \cdot h) = C_{1,\varepsilon} + C_{2,\varepsilon}\varphi_\varepsilon(b_\varepsilon(t)) \leq C_{1,\varepsilon} + C_{2,\varepsilon}\varphi_\varepsilon(b_\varepsilon(0)).
$$
It means that $t \mapsto s(t)$ is $L^2$ bounded. We have by definition,
$$
\e^{-s(t)} = f_{\varepsilon,b_\varepsilon(0)}\tilde{g}_\varepsilon(t)f_{\varepsilon,b_\varepsilon(0)}^{-1}(f_{\varepsilon,b_\varepsilon(0)}\tilde{g}_\varepsilon(t)f_{\varepsilon,b_\varepsilon(0)}^{-1})^*,
$$
thus $t \mapsto \tilde{g}_\varepsilon(t)$ and $t \mapsto \tilde{g}_\varepsilon(t)^{-1}$ are $L^2$ bounded. We also have,
$$
g_\varepsilon(t) = \e^{-\sigma(\varepsilon,b_\varepsilon(0))}\tilde{g}_\varepsilon(t)\e^{\sigma(\varepsilon,b_\varepsilon(t))},
$$
thus $t \mapsto g_\varepsilon(t)$ and $t \mapsto g_\varepsilon(t)^{-1}$ are bounded too (for any norm since they lie in a finite dimensional vector space). Therefore, we can find a sequence $t_m \rightarrow +\infty$ such that $g_\varepsilon(t_m) \rightarrow g_\varepsilon \in G$ is invertible and in particular,
$$
b_\varepsilon(t_m) \rightarrow b_{\infty,\varepsilon} = g_\varepsilon \cdot b_\varepsilon(0) \in \mathcal{O}.
$$
Let $l = \nu_\varepsilon(b_{\infty,\varepsilon}) \in \frak{k}$.

Since the norm of the moment map decreases by Proposition \ref{PRO:Flot défini}, $\norme{\nu_\varepsilon(b_\varepsilon(t))}_\varepsilon \rightarrow \norme{l}_\varepsilon$. By Proposition \ref{PRO:phi décroissante}, up to shrinking $U$,
$$
\frac{\partial}{\partial t}\varphi_\varepsilon(b_\varepsilon(t)) \landau{\varepsilon}{0} -2\norme{\nu_\varepsilon(b_\varepsilon(t))}_\varepsilon^2 + \mathrm{o}(\norme{\nu_\varepsilon(b_\varepsilon(t))}_\varepsilon^2) \leq -\norme{l}_\varepsilon^2 \textrm{ for $t$ large enough},
$$
and inequality (\ref{EQ:Simpson}) implies that $\varphi$ is bounded from below hence $l = 0$.

In the semi-stable case, we have by Theorem \ref{THE:Structures cônes} that $\varepsilon = \lim_{m \rightarrow +\infty} \varepsilon_m$ for some $\varepsilon_m$ such that $\E$ is $[\Theta_{\varepsilon_m}]$-stable. The $b_{\infty,\varepsilon_m}$ remain in a compact set included in $B$ so, up to extraction, we may build $b_{\infty,\varepsilon} \in B \cap \overline{\mathcal{O}}$ as $\lim_{m \rightarrow +\infty} b_{\infty,\varepsilon_m}$. The bound on the norm of $b_{\infty,\varepsilon}$ is given by Proposition \ref{PRO:Inégalité norme b}.
\end{proof}

\subsection{Convergence when $\varepsilon \rightarrow 0$ and continuity results}

A consequence of Proposition \ref{PRO:Limite b_varepsilon} is Introduction's Theorem \ref{THE:3}.
\begin{theorem}\label{THE:Famille de connections HYM}
    Let $(X,\Theta_0)$ be a compact balanced manifold and $\E$ a $[\Theta_0]$-semi-stable sufficiently smooth holomorphic vector bundle. Assume that $\mC_s(\E) \neq \emptyset$. There is a $\mC^0$ open neighbourhood $U$ of $0$ in the space of closed $(n - 1,n - 1)$ forms and a family $(\dbar_\varepsilon)_{\varepsilon \in U|\Theta_\varepsilon \in \mC_{ss}(\E)}$ of integrable Dolbeault operators on $E$ such that for all $\varepsilon$, $\dbar_\varepsilon$ is $\Theta_\varepsilon$-HYM and $(E,\dbar_\varepsilon) \cong \Gr_{[\Theta_\varepsilon]}(\E)$. Moreover,
    $$
    \norme{\dbar_\varepsilon - \dbar_0}_A = \mathrm{O}\!\left(\norme{\varepsilon}_{A'} + \sqrt{\norme{[\varepsilon]}}\right).
    $$
    In particular, $\dbar_\varepsilon \tend{\varepsilon}{0} \dbar_0$ for the $\mC^\infty$ topology on $U$ and on the space of Dolbeault operators.
\end{theorem}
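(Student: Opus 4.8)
The plan is to take $\dbar_\varepsilon:=\dbar_{\varepsilon,b_{\infty,\varepsilon}}=\dbar_0+\tilde{\Phi}(\varepsilon,b_{\infty,\varepsilon})$, where $b_{\infty,\varepsilon}\in B\cap\overline{\mathcal{O}}$ is the point produced by Proposition~\ref{PRO:Limite b_varepsilon}, and to read off each assertion from the construction of the perturbed Kuranishi slice. Since $\nu_\varepsilon(b_{\infty,\varepsilon})=\Lambda_\varepsilon F_{\varepsilon,b_{\infty,\varepsilon}}+\i c_\varepsilon\Id_E=0$, the operator $\dbar_\varepsilon$ satisfies $\Lambda_\varepsilon\i F_{\dbar_\varepsilon}=c_\varepsilon\Id_E$, and $c_\varepsilon$ is, by construction and by Chern--Weil theory (Remark~\ref{REM:Formule c}), the topological constant $c_{\Theta_\varepsilon}$; hence $\dbar_\varepsilon$ is $\Theta_\varepsilon$-HYM. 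It is integrable because $\mathcal{O}\subset Z$ (for $b\in\mathcal{O}$ the operator $\dbar_b$ is gauge equivalent to the integrable operator $\dbar_E$), so $b_{\infty,\varepsilon}\in\overline{\mathcal{O}}\subset Z$ as $Z$ is closed, and $\dbar_\varepsilon=\e^{\sigma(\varepsilon,b_{\infty,\varepsilon})}\cdot\dbar_{b_{\infty,\varepsilon}}$ is a gauge transform of the integrable operator $\dbar_{b_{\infty,\varepsilon}}$.

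Next I would establish $(E,\dbar_\varepsilon)\cong\Gr_{[\Theta_\varepsilon]}(\E)$, splitting into two cases. If $\E$ is $[\Theta_\varepsilon]$-stable, Proposition~\ref{PRO:Limite b_varepsilon} gives $b_{\infty,\varepsilon}\in\mathcal{O}$, so $\dbar_{b_{\infty,\varepsilon}}$ — and hence $\dbar_\varepsilon$ — is gauge equivalent to $\dbar_E$; since a stable sheaf is its own graded object, $(E,\dbar_\varepsilon)\cong\E=\Gr_{[\Theta_\varepsilon]}(\E)$, which is locally free. If $\E$ is only $[\Theta_\varepsilon]$-semi-stable, I would first shrink $U$ so that $[\Theta_\varepsilon]\in\underline{\mC}_{ss}^{\mathrm{D}}(\E)$ for all $\varepsilon\in U$, using the openness of the sufficiently smooth locus (the corollary to Lemma~\ref{LEM:Gr(Gr)}); then $\Gr_{[\Theta_\varepsilon]}(\E)$ is locally free. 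The operator $\dbar_\varepsilon$ lies in the closure of the $\G^\C$-orbit of $\dbar_E$ (it is a gauge transform of $\dbar_{b_{\infty,\varepsilon}}$, and $b_{\infty,\varepsilon}$ is a limit of points of $\mathcal{O}$, each representing $\E$), and by the Kobayashi--Hitchin correspondence (Proposition~\ref{PRO:Kobayashi--Hitchin}) it is $[\Theta_\varepsilon]$-polystable. On the other hand, degenerating $\E$ along a $[\Theta_\varepsilon]$-Jordan--Hölder filtration via a one-parameter subgroup of $G$, exactly as in Step~1 of the proof of Proposition~\ref{PRO:Inégalité norme b}, exhibits $\Gr_{[\Theta_\varepsilon]}(\E)$ in the same orbit closure. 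Since an orbit closure contains a unique closed orbit and polystable points form closed orbits, $\dbar_\varepsilon$ and the Dolbeault operator of $\Gr_{[\Theta_\varepsilon]}(\E)$ lie in the same $\G^\C$-orbit, which is the claim; this is the argument of the analogous step in \cite{Clarke_Tipler}.

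For the estimate, write $\dbar_\varepsilon-\dbar_0=\tilde{\Phi}(\varepsilon,b_{\infty,\varepsilon})$. Composing point~5 of Proposition~\ref{PRO:Tranche déformée} with the first map of Lemma~\ref{LEM:Applications lisses} shows that $(\varepsilon,b)\mapsto\tilde{\Phi}(\varepsilon,b)=\e^{\sigma(\varepsilon,b)}\cdot\dbar_b-\dbar_0$ is smooth from $(U',\norme{\cdot}_{A'})\times B'$ to $(\Omega^{0,1}(X,\End(E)),\norme{\cdot}_A)$; being $\mC^1$ and vanishing at $(0,0)$, it is Lipschitz near the origin, so $\norme{\dbar_\varepsilon-\dbar_0}_A\leq C\big(\norme{\varepsilon}_{A'}+\norme{b_{\infty,\varepsilon}}\big)$ once $\varepsilon$ is small in the $A'$-norm. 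Inserting $\norme{b_{\infty,\varepsilon}}=\mathrm{O}\big(\norme{\varepsilon}_{\mC^0}+\sqrt{\norme{[\varepsilon]}}\big)$ from Proposition~\ref{PRO:Limite b_varepsilon} and using that $\norme{\cdot}_{A'}$ dominates $\norme{\cdot}_{\mC^0}$ gives $\norme{\dbar_\varepsilon-\dbar_0}_A=\mathrm{O}\big(\norme{\varepsilon}_{A'}+\sqrt{\norme{[\varepsilon]}}\big)$. Finally, applying this with $A=\mC^k$ for every $k\geq1$ and recalling that the class map $[\,\cdot\,]$ is $\mC^0$-continuous, any sequence $\varepsilon_j\to0$ in $\mC^\infty$ has $\norme{\varepsilon_j}_{\mC^{k-1}}\to0$ and $\norme{[\varepsilon_j]}\to0$ for all $k$, whence $\norme{\dbar_{\varepsilon_j}-\dbar_0}_{\mC^k}\to0$ for all $k$, i.e.\ $\dbar_{\varepsilon_j}\to\dbar_0$ in the $\mC^\infty$ topology.

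The step I expect to be the main obstacle is the semi-stable case of the identification $(E,\dbar_\varepsilon)\cong\Gr_{[\Theta_\varepsilon]}(\E)$: one must make precise the statement that a zero of the \emph{perturbed} moment map $\nu_\varepsilon$ lying in $\overline{\mathcal{O}}$ represents the associated graded of $\E$ with respect to the polarisation $[\Theta_\varepsilon]$. The difficulty is that $\Omega_\varepsilon$ is symplectic but not compatible with the complex structure of $B$, so the Kempf--Ness/Hilbert--Mumford dictionary (zeros of the moment map $\leftrightarrow$ polystability $\leftrightarrow$ unique closed orbit in an orbit closure) must be invoked in the adapted form developed in this section and in \cite{GRS,Clarke_Tipler} rather than quoted directly; this is also where one needs $\E$ to stay sufficiently smooth for small $\varepsilon$, so that the target $\Gr_{[\Theta_\varepsilon]}(\E)$ is genuinely a vector bundle.
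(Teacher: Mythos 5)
Your construction and most of your verification coincide with the paper's proof: the choice $\dbar_\varepsilon = \dbar_0 + \tilde{\Phi}(\varepsilon,b_{\infty,\varepsilon})$, the HYM property read off from $\nu_\varepsilon(b_{\infty,\varepsilon})=0$, the stable case of the identification via $b_{\infty,\varepsilon}\in\mathcal{O}$, the estimate $\norme{\dbar_\varepsilon-\dbar_0}_A=\mathrm{O}\!\left(\norme{\varepsilon}_{A'}+\norme{b_{\infty,\varepsilon}}\right)$ obtained from the regularity of $\sigma$ (the paper's intermediate bound $\mathrm{O}(\norme{\varepsilon}_{A'}+\norme{b_{\infty,\varepsilon}}^2)+\mathrm{O}(\norme{b_{\infty,\varepsilon}})$ is marginally sharper but leads to the same conclusion after inserting Proposition \ref{PRO:Limite b_varepsilon}), and the deduction of $\mC^\infty$ convergence. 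Your explicit remark that $\overline{\mathcal{O}}\cap B\subset Z$ guarantees integrability is correct and is left implicit in the paper.

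The genuine gap is exactly where you predicted it: the identification $(E,\dbar_\varepsilon)\cong\Gr_{[\Theta_\varepsilon]}(\E)$ when $\E$ is only $[\Theta_\varepsilon]$-semi-stable. The paper does not run a Kempf--Ness argument here at all; it cites \cite[Theorem 4]{Buchdahl_Schumacher} (with a footnote justifying its validity for balanced non-K\"ahler metrics), which states precisely that a polystable bundle lying in the closure of the $\G^\C$-orbit of $\dbar_E$ is isomorphic to the graded object. Your sketch rests on the principle that an orbit closure contains a unique closed orbit and that polystable points form closed orbits. That is a theorem of finite-dimensional GIT for a reductive group acting on a K\"ahler (or suitably tamed symplectic) space; it does not formally apply to the $\G^\C$-action on the infinite-dimensional space $\mathcal{H}(E)$, which is where your degeneration of $\E$ along a $[\Theta_\varepsilon]$-Jordan--H\"older filtration naturally lives. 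The one-parameter subgroup adapted to that filtration is built from subsheaves of $\E$ and has no reason to lie in the finite-dimensional group $G=\Aut_0(\Gr_{[\Theta_0]}(\E))$, nor to act within the slice $B$; so you can neither invoke the finite-dimensional theory of \cite{GRS} on $(B,\Omega_\varepsilon)$ (which would in any case have to be adapted to the non-compatible form $\Omega_\varepsilon$, and would only compare points of $\overline{\mathcal{O}}$, not the abstract bundle $\Gr_{[\Theta_\varepsilon]}(\E)$) nor quote infinite-dimensional folklore. Closing this step as you propose amounts to re-proving the Buchdahl--Schumacher theorem; citing it, as the paper does, is the intended resolution.
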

\begin{proof}
Let for all $\varepsilon \in U$ such that $\E$ is $[\Theta_\varepsilon]$-semi-stable, $\dbar_\varepsilon = \dbar_0 + \tilde{\Phi}(\varepsilon,b_{\infty,\varepsilon})$ where $b_{\infty,\varepsilon}$ is given by Proposition \ref{PRO:Limite b_varepsilon}. The HYM condition is fulfilled and when $\Theta_\varepsilon$ is in the stable cone, $b_{\infty,\varepsilon} \in \mathcal{O}$ so,
$$
(E,\dbar_\varepsilon) \cong \E \cong \Gr_{[\Theta_\varepsilon]}(\E).
$$
When $\Theta_\varepsilon$ is only in the semi-stable cone, by \cite[Theorem 4]{Buchdahl_Schumacher}\footnote{In this paper, the results are proven in the Kähler case. However, in the third point of their conclusion and in the remark following \cite[Proposition 3.2]{Buchdahl_Schumacher}, the authors notice that the only obstruction for it to work on any compact complex manifold equipped with a Gauduchon metric is that in this case, the slope might not be of topological nature. In the balanced case, the slope is of topological nature since it only depends on the first Chern class of the bundle and not on the choice of its representative.},
$$
(E,\dbar_\varepsilon) \cong (E,\dbar_0 + \Phi(b_{\infty,\varepsilon})) \cong \Gr_{[\Theta_\varepsilon]}(\E).
$$

By smoothness of $\sigma$ given by Proposition \ref{PRO:Tranche déformée} and the bound on the norm of $b$ given by Proposition \ref{PRO:Limite b_varepsilon}, we have, when $\varepsilon \rightarrow 0$,
\begin{align*}
    \norme{\dbar_\varepsilon - \dbar_0}_A & = \norme{\e^{\sigma(\varepsilon,b_{\infty,\varepsilon})} \cdot (\dbar_0 + \Phi(b_{\infty,\varepsilon})) - \dbar_0}_A\\
    & = \norme{\e^{\sigma(\varepsilon,b_{\infty,\varepsilon})}\dbar_0(\e^{-\sigma(\varepsilon,b_{\infty,\varepsilon})}) + \e^{\sigma(\varepsilon,b_{\infty,\varepsilon})}\Phi(b_{\infty,\varepsilon})\e^{-\sigma(\varepsilon,b_{\infty,\varepsilon})}}_A \textrm{ by (\ref{EQ:Action de gauge}),}\\
    & = \mathrm{O}(\norme{\sigma(\varepsilon,b_{\infty,\varepsilon})}_{'\!A}) + \mathrm{O}(\norme{\Phi(b_{\infty,\varepsilon})}_A)\\
    & = \mathrm{O}(\norme{\varepsilon}_{A'} + \norme{b_{\infty,\varepsilon}}^2) + \mathrm{O}(\norme{b_{\infty,\varepsilon}})\\
    & = \mathrm{O}\!\left(\norme{\varepsilon}_{A'} + \norme{\varepsilon}_{\mC^0} + \sqrt{\norme{[\varepsilon]}}\right)\\
    & = \mathrm{O}\!\left(\norme{\varepsilon}_{A'} + \sqrt{\norme{[\varepsilon]}}\right).
\end{align*}
\end{proof}
\begin{remark}
Clearly, the same bound holds for $\norme{\partial_\varepsilon - \partial_0}_A$ and $\norme{\nabla_\varepsilon - \nabla_0}_A$, where $\nabla_\varepsilon = \partial_\varepsilon + \dbar_\varepsilon$ is the Chern connection associated to $(\dbar_\varepsilon,h)$.
\end{remark}

Let $\mathcal{H}(E)$ be the set of holomorphic structures on $E$ \textit{i.e.} the set of the integrable Dolbeault operators. We endow it with the $\mC^\infty$ topology. By \cite[Corollary 7.1.15]{Kobayashi}, the action of $\G$ on $\mathcal{H}(E)$ is proper so the space $\mathcal{H}(E)/\G$ is Hausdorff. We recall that $\underline{\mC}_{ss}(\E) \subset \mC_{ss}(\E)$ is the cone all balanced $\Theta$ such that $\E$ is $[\Theta]$-semi-stable and sufficiently smooth. We endow $\underline{\mC}_{ss}(\E) \subset \Omega^{n - 1,n - 1}(X,\R)$ with the $\mC^\infty$ topology too. We are now ready to prove Introduction's Theorem \ref{THE:4}.

\begin{theorem}\label{THE:Gamma continue}
    The function,
    $$
    \Gamma_\E : \fonction{\underline{\mC}_{ss}(\E)}{\mathcal{H}(E)/\G}{\Theta}{\{\dbar \in \mathcal{H}(E)|(E,\dbar) \cong \Gr_{[\Theta]}(\E) \textrm{ and $\dbar$ is $\Theta$-HYM}\}},
    $$
    is well-defined and continuous.
\end{theorem}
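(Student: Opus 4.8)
The plan is to first establish that $\Gamma_\E$ is well defined, which follows at once from the Kobayashi--Hitchin correspondence, and then to prove continuity by reducing, near any fixed metric, to Theorem \ref{THE:Famille de connections HYM} applied to the \emph{stable summands} of the relevant graded object. For well-definedness, fix $\Theta\in\underline{\mC}_{ss}(\E)$: then $\Gr_{[\Theta]}(\E)$ is $[\Theta]$-polystable and, $\E$ being sufficiently smooth, locally free, so its smooth structure is canonically $E$. Since $(E,\dbar)\cong\Gr_{[\Theta]}(\E)$ is equivalent to $\dbar$ lying in the complex gauge orbit of the Dolbeault operator of $\Gr_{[\Theta]}(\E)$, Proposition \ref{PRO:Kobayashi--Hitchin} shows that the set of $\Theta$-HYM operators $\dbar$ with $(E,\dbar)\cong\Gr_{[\Theta]}(\E)$ is exactly one $\G$-orbit, so $\Gamma_\E(\Theta)$ is a well-defined point of $\mathcal{H}(E)/\G$.

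For continuity, since the source is metrizable it suffices to check sequential continuity at an arbitrary $\Theta_0$, and since $\mathcal{H}(E)/\G$ is Hausdorff it suffices, given $\Theta_n\to\Theta_0$ in $\underline{\mC}_{ss}(\E)$, to produce a subsequence along which $\Gamma_\E$ converges to $\Gamma_\E(\Theta_0)$. Near $[\Theta_0]$ only finitely many inequalities $l_{\mathcal{S}}$ are relevant (Corollary \ref{COR:Finitude locale}), the ones strict at $[\Theta_0]$ remain strict nearby, and $\mC_{ss}^{\mathrm{D}}(\E)$ is locally polyhedral (Theorem \ref{THE:Structures cônes}); hence after extraction all $[\Theta_n]$ lie in the relative interior of a single face $F\supseteq F_{[\Theta_0]}$. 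By Lemma \ref{LEM:Gr(Gr)}, $\mathcal{G}:=\Gr_{[\Theta_n]}(\E)$ is then independent of $n$, is locally free, is $[\Theta_0]$-semi-stable, and satisfies $\Gr_{[\Theta_0]}(\mathcal{G})\cong\Gr_{[\Theta_0]}(\E)$; being $[\Theta_n]$-polystable it equals its own $[\Theta_n]$-graded object. Consequently $\Gamma_\E$ takes the same value as $\Gamma_{\mathcal{G}}$ (the analogue of $\Gamma_\E$ built from $\mathcal{G}$) on each $\Theta_n$ and on $\Theta_0$, so we are reduced to analysing the $\Theta_n$-HYM operators of $\mathcal{G}$; write $\varepsilon_n=\Theta_n-\Theta_0$, so $\varepsilon_n\to 0$ in $\mC^\infty$ (hence in every $A'$-norm) and $[\varepsilon_n]\to 0$.

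Next I would decompose $\mathcal{G}=\bigoplus_j\mathcal{G}_j$ into its $[\Theta_n]$-stable summands and apply Theorem \ref{THE:Famille de connections HYM} to each $\mathcal{G}_j$ with base metric $\Theta_0$: $\mathcal{G}_j$ is a direct summand of the $[\Theta_0]$-semi-stable bundle $\mathcal{G}$ with the same $[\Theta_0]$-slope, hence $[\Theta_0]$-semi-stable; it is $[\Theta_0]$-sufficiently smooth because $\bigoplus_j\Gr_{[\Theta_0]}(\mathcal{G}_j)=\Gr_{[\Theta_0]}(\mathcal{G})$ is locally free; and $\Theta_n\in\mC_s(\mathcal{G}_j)$, so $\mC_s(\mathcal{G}_j)\neq\emptyset$. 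Theorem \ref{THE:Famille de connections HYM} then yields, for $\varepsilon$ near $0$, $h_j$-unitary operators $\dbar^{(j)}_\varepsilon$ on a Hermitian bundle $(E_j,h_j)$ realising $\mathcal{G}_j$, with $\dbar^{(j)}_\varepsilon$ being $\Theta_\varepsilon$-HYM, $(E_j,\dbar^{(j)}_\varepsilon)\cong\Gr_{[\Theta_\varepsilon]}(\mathcal{G}_j)$, and $\norme{\dbar^{(j)}_\varepsilon-\dbar^{(j)}_0}_A=\mathrm{O}\!\left(\norme{\varepsilon}_{A'}+\sqrt{\norme{[\varepsilon]}}\right)$. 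Setting $(E_\oplus,h_\oplus)=\bigoplus_j(E_j,h_j)$, $\dbar_\varepsilon=\bigoplus_j\dbar^{(j)}_\varepsilon$, and fixing a smooth isometry $\iota\colon(E_\oplus,h_\oplus)\to(E,h)$ --- which exists because $E_\oplus$ is the smooth bundle underlying $\mathcal{G}=\Gr_{[\Theta_n]}(\E)$, namely $E$ --- one checks: at $\varepsilon=\varepsilon_n$ all $\mathcal{G}_j$ share the $[\Theta_n]$-slope $\mu_{[\Theta_n]}(\E)$, so the summands' Einstein constants agree and $\iota_*\dbar_{\varepsilon_n}$ is $\Theta_n$-HYM with $(E,\iota_*\dbar_{\varepsilon_n})\cong\bigoplus_j\mathcal{G}_j=\Gr_{[\Theta_n]}(\E)$, representing $\Gamma_\E(\Theta_n)$; at $\varepsilon=0$ all $\mathcal{G}_j$ share the $[\Theta_0]$-slope $\mu_{[\Theta_0]}(\E)$, so $\iota_*\dbar_0$ is $\Theta_0$-HYM with $(E,\iota_*\dbar_0)\cong\bigoplus_j\Gr_{[\Theta_0]}(\mathcal{G}_j)\cong\Gr_{[\Theta_0]}(\E)$, representing $\Gamma_\E(\Theta_0)$. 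The estimates give $\dbar_{\varepsilon_n}\to\dbar_0$ in $\mC^\infty$, hence $\iota_*\dbar_{\varepsilon_n}\to\iota_*\dbar_0$ in $\mC^\infty$, hence $\Gamma_\E(\Theta_n)\to\Gamma_\E(\Theta_0)$ in $\mathcal{H}(E)/\G$, as desired.

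The main obstacle is that Theorem \ref{THE:Famille de connections HYM} applies only to bundles stable with respect to \emph{some} balanced class, while $\Gr_{[\Theta]}(\E)$ is in general only polystable; passing to the stable summands is legitimate precisely because Lemma \ref{LEM:Gr(Gr)} keeps the graded object constant along a face and transfers its stability and smoothness from $[\Theta_n]$ to the base class $[\Theta_0]$, and because the Einstein constants of the summands coincide at exactly the two parameters $\varepsilon=\varepsilon_n$ and $\varepsilon=0$ that enter the argument, so that the naive direct sum $\bigoplus_j\dbar^{(j)}_\varepsilon$ is genuinely HYM where it must be, even though it need not be for intermediate $\varepsilon$. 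The only further routine point is checking that the various Jordan--Hölder graded objects of the direct sums split as direct sums of graded objects, which follows from equality of slopes and uniqueness of the graded object (Proposition \ref{PRO:FJH}).
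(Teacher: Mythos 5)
Your proof is correct and follows essentially the same route as the paper's: both reduce, via Lemma \ref{LEM:Gr(Gr)}, to the polystable graded object associated to a face of the local polyhedral cone, split it into its stable summands (which are stable for some nearby class, hence fall under the hypotheses of Theorem \ref{THE:Famille de connections HYM}), and conclude by applying that theorem summand-wise, checking that the Einstein constants agree so the direct sum of HYM operators is HYM. The only differences are organizational --- you argue sequentially at a fixed $\Theta_0$, extracting a subsequence into the relative interior of a single face, whereas the paper runs a three-case argument uniformly over a compact polyhedral piece keyed to its maximal face --- plus the minor point that one convergent subsequence does not suffice and you should apply your extraction to an arbitrary subsequence of $(\Theta_n)$, which your argument permits verbatim.
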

\begin{proof}
For all $\Theta \in \underline{\mC}_{ss}(\E)$, $\Gr_{[\Theta]}(\E)$ is a $[\Theta]$-polystable vector bundle so Proposition \ref{PRO:Kobayashi--Hitchin} tells us that the set,
$$
\{\dbar \in \mathcal{H}(E)|(\E,\dbar) \cong \Gr_{[\Theta]}(\E) \textrm{ and $\dbar$ is $\Theta$-HYM}\},
$$
is exactly one $\G$-orbit so $\Gamma_\E$ is well defined and indeed takes values in $\mathcal{H}(E)/\G$. Let $K \subset \mathcal{B}_X$ be a convex compact subset. It is enough to show continuity on the polyhedral cone $C = \underline{\mC}_{ss}(\E) \cap K$ for all such $K$ and we may assume that $C \neq \emptyset$. We have to consider three cases.

First case : $\mC_s(\E) \cap C \neq \emptyset$. In this case, continuity of $\Gamma$ at each point $\Theta_0$ is a consequence of Theorem \ref{THE:Famille de connections HYM}.

Second case : It exists $\Theta_0 \in C$ such that $F_{[\Theta_0]}$ is the whole cone $C$ and $\E$ is $[\Theta_0]$-polystable. In this case, let us write,
$$
\E = \bigoplus_{k = 1}^m \E_k,
$$
with the $\E_k$ $[\Theta_0]$-stable with the same $[\Theta_0]$-slope. For all $\Theta \in \underline{\mC}_{ss}(\E)$, each $\E_k$ is $[\Theta]$-semi-stable and have the same $[\Theta]$-slope as $\E$ because $[\Theta] \in F_{[\Theta_0]}$. It implies that,
$$
\Gr_{[\Theta]}(\E) = \bigoplus_{k = 1}^m \Gr_{[\Theta]}(\E_k).
$$
Moreover, any $\Theta$-HYM connection on $\Gr_{[\Theta]}(\E)$ makes this decomposition holomorphic and induces a $\Theta$-HYM connection on each $\Gr_{[\Theta]}(\E_k)$. Therefore, it makes sense to write that $\Gamma_\E = \sum_{k = 1}^m \Gamma_{\E_k}$. The fact that each $\Gamma_{\E_k}$ is continuous is the first case because for all $k$, $\Theta_0 \in \mC_s(\E_k) \cap C$.

Third case : General case. $C$ is assumed to be non-empty so there is a $\Theta_0$ such that $F_{[\Theta_0]} = C$. Thus, for all $\Theta \in \underline{\mC_{ss}}(\E)$, $F_{[\Theta]} \subset F_{[\Theta_0]}$. Therefore, a consequence of Lemma \ref{LEM:Gr(Gr)} is that,
$$
\Gamma_\E = \Gamma_{\Gr_{[\Theta_0]}(\E)},
$$
and $\Gr_{[\Theta_0]}(\E)$ is a $[\Theta_0]$-polystable vector bundle. We conclude thanks to the second case.
\end{proof}

\bibliographystyle{plainurl}

\end{document}